\newtheorem{theorem}{Theorem}[section]
\newtheorem{lemma}[theorem]{Lemma}
\theoremstyle{definition}
\theoremstyle{remark}
\newtheorem{remark}[theorem]{Remark}
\numberwithin{equation}{section}
\newcommand{\cL}{\mathcal{L}}
\newcommand{\Z}{\mathbb{Z}}
\newcommand{\R}{\mathbb{R}}
\newcommand{\e}{\varepsilon}
\newcommand{\dist}{\text{dist}}
\newcommand{\jz}[1]{\textcolor{red}{#1}}
\begin{document}

\title[Lipschitz and $W^{1,p}$ estimates]{Uniform regularity for degenerate elliptic equations in  perforated domains}


\author{Zhongwei Shen}
\address{Zhongwei Shen: Department of Mathematics, University of Kentucky, Lexington, Kentucky 40506, USA.}
\email{zshen2@uky.edu}

\author{Jinping Zhuge}
\address{Jinping Zhuge: Morningside Center of Mathematics, Academy of Mathematics and Systems Science,
Chinese Academy of Sciences, Beijing, China.}
\email{jpzhuge@amss.ac.cn}

\subjclass[2020]{35B27, 35J70}


\maketitle

\begin{abstract}

This paper is concerned with a class of degenerate elliptic equations with rapidly oscillating coefficients in  periodically perforated domains, which arises in the study of spectrum problems for uniformly elliptic equations in perforated domains. We establish a quantitative convergence rate and obtain the uniform weighted Lipschitz and $W^{1,p}$ estimates.

\end{abstract}

\section{Introduction}

In this paper we consider a class of degenerate elliptic operators with rapidly oscillating coefficients,
\begin{equation}\label{op}
\mathcal{L}_\e =-\text{\rm div} (\phi_\e^2(x)  A_\e (x)  \nabla ),
\end{equation}
in a periodically perforated domain $\Omega_\e$, where $\phi_\e (x)=\phi(x/\e)$ is a periodic scalar function
that vanishes on the boundaries of holes and $A_\e (x) =A(x/\e)$
a periodic  matrix-valued function. The parameter $\e>0$ is assumed
to be small. The operator $\cL_\e$ arises in the study of the asymptotic expansions of the spectrum
for the uniformly elliptic operator $-\text{\rm div} (A_\e(x)  \nabla )$ in perforated domains; see \cite{V81} and \cite[Chapter III]{OSY92}.

To describe $\Omega_\e$, let $Y=(-1/2, 1/2)^d$ be a unit cell in $\mathbb{R}^d$ and $\{ \tau_i: i=1, 2, \dots, m \}$ a finite
number of mutually disjoint open sets (holes) in $Y$ with smooth and connected boundaries.
We assume that dist$(\tau_i, \partial Y)\ge c_0$ and dist$(\tau_i, \tau_j)\ge c_0$ for $i\neq j$, where $c_0>0$.
Let 
\begin{equation*}
T=\bigcup_{i=1}^m \overline{\tau_i} \quad \text{ and } \quad
T_\e=\bigcup_{z\in \mathbb{Z}^d} \e (z+ T).
\end{equation*}
For a bounded domain $\Omega$ in $\mathbb{R}^d$ and $0< \e< 1$, we define
\begin{equation*}
\Omega_\e=\Omega\setminus T_\e=\Omega \setminus \bigcup_{z\in \mathbb{Z}^d} \e (z+T).
\end{equation*}
Throughout the paper we shall assume that 
\begin{equation}\label{H}
\text{\rm dist} (\partial \Omega, \partial T_\e) \ge c_0
\end{equation}
for some $c_0>0$. As a result, $\partial \Omega_\e =\partial \Omega \cup \Gamma_\e$,
where $\Gamma_\e =\Omega \cap \partial T_\e $ and dist$(\partial \Omega, \Gamma_\e)\ge c_0$. The restrictive geometric assumption \eqref{H} is essential for the uniform regularity near the boundary $\partial\Omega$. The typical examples in applications satisfying this assumption include rectangular domains with sides parallel to the coordinate planes.

Next, we describe the conditions on $\phi$ and $A$.
Throughout the paper, we assume that $\phi=\phi(y)$ and $A=A(y)$ are $Y$-periodic, and that the matrix $A=A(y)=(a_{ij}(y))_{d\times d} $ is symmetric, H\"older continuous, and uniformly elliptic, i.e.,
\begin{equation}
    \langle A \xi, \xi\rangle \ge \mu |\xi|^2 \quad \text{ for any } \xi \in \mathbb{R}^d,
\end{equation}
for some $\mu>0$.
We assume that $\phi(x)$ degenerates as a distance function from the holes, i.e.,
\begin{equation}\label{cond.phi-1}
\phi(y) \approx \text{\rm dist}(y, T) \quad \text{ for } y \in Y,
\end{equation}
where the notation $a\approx b$ indicates that $a$ is comparable to $b$ in the sense that $cb \le a\le C b$ for some universal constants $c,C>0$. It is worth noting that $\phi^2$ fails to be a Muckenhoupt $A_2$ weight. We refer the reader  to \cite{FKS82,WWYZ07,STV21,DP21, DP20} and references therein for
regularity estimates of degenerate or singular elliptic equations.
 We further assume that  $\phi\in C^{1, \alpha} (Y\setminus T)$ for some $\alpha>0$ and that 
\begin{equation}\label{cond.phi-2}
\|   \text{\rm div} (A \nabla \phi ) \|_{L^{p_0}(Y\setminus T)} \le C
\end{equation}
for some $p_0>d$ (this is needed for the small-scale Lipschitz estimate). Clearly, this condition is satisfied if $A$ and $\phi$ are sufficiently smooth. Another example is the case that $\phi$ is 
the ground state for the Schr\"{o}dinger operator $-\text{\rm div}(A\nabla) +V$ in $Y\setminus T$ with 
periodic conditions on $\partial Y$ and Dirichlet condition on $\partial T$,
where $V \in  L^{p_0} (Y\setminus T)$ for some $p_0> d$. In the application to the spectral problems (see \cite{V81,OSY92} and \eqref{e-a} below), $\phi$ is the principal Dirichlet eigenfunction of $-\text{div}(A\nabla)$,
which satisfies  both \eqref{cond.phi-1} and \eqref{cond.phi-2}.

The following two theorems are the main results of this paper.

\begin{theorem}\label{main-thm-1}
Suppose that $u_\e$ is the weak solution of
\begin{equation}\label{main.eq.phief}
\cL_\e( u_\e) = \phi_\e f \quad \text{ in } \Omega_\e\quad \text{ and }
\quad u_\e =0 \quad \text{ on } \partial \Omega.
\end{equation}
\begin{itemize}
    \item[(i)] If $\Omega$ is a $C^{1,\alpha}$ domain and $p>d$, then
\begin{equation}\label{est.GlobalLip}
    \| \phi_\e \nabla u_\e \|_{L^\infty(\Omega_\e)}
  \le C_p \| f\|_{L^p(\Omega_\e)}.
 \end{equation}

\item[(ii)] If $\Omega$ is $C^1$ or convex and $1< p< d $, $ \frac{1}{p^*}=\frac{1}{p}-\frac{1}{d}$, then
\begin{equation}
    \| \phi_\e \nabla u_\e \|_{L^{p^*}(\Omega_\e)}   \le C_p \| f \|_{L^p(\Omega_\e)}.
\end{equation}
\end{itemize}
In both cases, the constant $C_p$ is independent of $\e \in (0, 1)$.
\end{theorem}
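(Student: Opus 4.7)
The plan is to combine the Avellaneda--Lin compactness/iteration scheme with a Green's function and Riesz potential analysis.

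For (i), I would split the proof at the homogenization scale $r=\e$. At large scales $r\ge\e$, the quantitative convergence rate established earlier in the paper lets us iteratively approximate $u_\e$ by the solution $u_0$ of the homogenized equation $-\text{\rm div}(\widehat{A}\nabla u_0)=\bar\phi\, f$; since the homogenized operator is non-degenerate (the weight enters only as an averaged constant) and $\Omega\in C^{1,\alpha}$, classical Schauder theory supplies the boundary $C^{1,\alpha}$ estimate needed for a Campanato-type iteration, yielding the large-scale Lipschitz bound
\[
\bigg(\fint_{B(x,r)\cap\Omega_\e}|\phi_\e\nabla u_\e|^2\bigg)^{1/2}\le C\|f\|_{L^p(\Omega_\e)},\qquad p>d,\ r\ge\e.
\]
At the small scale $r<\e$, I would rescale by $\e$ to reduce to a single (now $\e$-free) equation $-\text{\rm div}(\phi^2 A\nabla v)=\phi F$ on a fixed hole-free region of order one. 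Dividing by $\phi$ converts this into a uniformly elliptic equation with a lower-order drift $\phi^{-1} A\nabla\phi\cdot\nabla v$; hypothesis \eqref{cond.phi-2} places this drift in $L^{p_0}$ for $p_0>d$, so standard De~Giorgi--Moser--Schauder theory delivers the $L^\infty$ bound on $\nabla v$. Patching the two regimes via Lebesgue differentiation yields the pointwise estimate in (i).

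For (ii), I would leverage (i) to establish the Green's function gradient bound
\[
\phi_\e(x)\,|\nabla_x G_\e(x,y)|\le C\,|x-y|^{1-d},
\]
by applying the Lipschitz estimate to $G_\e(\cdot,y)$ away from its singularity. The representation $u_\e(x)=\int_{\Omega_\e} G_\e(x,y)\phi_\e(y) f(y)\,dy$ then gives
\[
\phi_\e(x)\,|\nabla u_\e(x)|\le C\int_{\Omega_\e}|x-y|^{1-d}\phi_\e(y)\,|f(y)|\,dy,
\]
a pointwise Riesz potential of $\phi_\e|f|$, and the Hardy--Littlewood--Sobolev inequality produces $\|\phi_\e\nabla u_\e\|_{L^{p^*}}\le C\|\phi_\e f\|_{L^p}\le C\|f\|_{L^p}$ since $\phi_\e$ is bounded. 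In the $C^1$ or convex case, where (i) itself may be unavailable, the same scheme should go through with the pointwise Green's function bound replaced by a weighted $W^{1,q}$ estimate obtained from the energy inequality and a real-variable (good-$\lambda$) iteration in the style of Shen; this weaker bound still interpolates to give the $L^{p^*}$ conclusion.

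The main obstacle I expect is the boundary version of the large-scale Lipschitz estimate behind (i). The compactness/iteration near $\partial\Omega$ requires both a convergence rate that holds up to the fixed boundary and a boundary $C^{1,\alpha}$ estimate for the homogenized operator. Hypothesis \eqref{H} is crucial here: by keeping the perforations a distance $c_0>0$ from $\partial\Omega$, it ensures that in a neighborhood of $\partial\Omega$ the weight $\phi_\e$ is bounded below by a positive constant, so the equation is locally uniformly elliptic and standard boundary homogenization techniques apply without modification. Away from $\partial\Omega$ but near the holes, the degeneracy is absorbed by the cell-problem analysis already encoded in the convergence rate, and the remaining interior iteration is comparatively routine.
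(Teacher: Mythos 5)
Your large-scale scheme for (i) (quantitative rate plus excess-decay/Campanato iteration against the homogenized solution, boundary handled via \eqref{H}) is essentially the paper's argument, but your small-scale step contains a genuine gap. The equation near a hole boundary is $-\operatorname{div}(\phi^2A\nabla v)=\phi F$, and dividing by $\phi^2$ produces the drift term $2\phi^{-1}A\nabla\phi\cdot\nabla v$, whose coefficient behaves like $\operatorname{dist}(x,T)^{-1}$ near $\partial T$; this is not in $L^{q}$ for any $q\ge 1$ in a neighborhood of $\partial T$, and hypothesis \eqref{cond.phi-2} does not help, since it controls the \emph{potential} quantity $\operatorname{div}(A\nabla\phi)$, not $\nabla\phi/\phi$. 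So the claimed De Giorgi--Moser--Schauder theory for the drift equation does not apply exactly where the new difficulty sits (boundary case II), and you also have no boundary condition on $\partial T$ to feed into such a theory, since solutions are only known to lie in the weighted space. The paper's route is different and is what makes the step work: first an unweighted $L^\infty$ bound for $u$ via a Moser iteration adapted to the weighted Sobolev inequalities of Section \ref{sec.2} (Lemma \ref{lem.Moser}), and then the substitution $w=\phi u$, which solves the \emph{non-degenerate} equation $-\operatorname{div}(A\nabla w)=Vu+F$ with $V=-\operatorname{div}(A\nabla\phi)\in L^{p_0}$, $p_0>d$, and $w=0$ on $\partial T$, so that classical regularity yields $\sup|\phi\nabla u|$ (Lemma \ref{lem.SS.Lip}). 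You would need to supply these two ingredients (or an equivalent) for your small-scale bound to stand.

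For (ii) your primary route (Green's function gradient bound plus Riesz potential) hinges on the full Lipschitz estimate (i), which is unavailable for merely $C^1$ or convex $\Omega$, as you note; but your fallback is underspecified at exactly the decisive point. A weighted $W^{1,q}$ estimate from a real-variable iteration gives $\|\phi_\e\nabla u_\e\|_{L^q}\lesssim\|\text{data}\|_{L^q}$ at the \emph{same} exponent; interpolation of such bounds cannot create the Sobolev gain from $L^p$ data to an $L^{p^*}$ gradient. The paper closes this by proving Theorem \ref{main-thm-2} (uniform $W^{1,p}$ estimates for divergence-form data, via small-scale and large-scale real-variable arguments) and then deducing (ii) by a duality argument combined with the weighted Sobolev--Poincar\'e inequality of Theorem \ref{thm.WSPI}: testing $\cL_\e u_\e=\phi_\e f$ against the solution of $\cL_\e v_\e=\operatorname{div}(\phi_\e g)$ and using $\|\phi_\e v_\e\|_{L^{p'}}\le C\|\phi_\e\nabla v_\e\|_{L^{q}}$. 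Some such duality (or a genuine Green-function representation with quantified kernel bounds for the degenerate operator, which you have not established) is needed to finish (ii).
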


\begin{theorem}\label{main-thm-2}
Suppose that $\Omega$ is a $C^1$ or convex domain and $u_\e$  the weak solution of 
\begin{equation}\label{main.eq.F}
\cL_\e( u_\e) = \text{\rm div} (\phi_\e f) +F \quad \text{ in } \Omega_\e\quad \text{ and }
\quad u_\e =0 \quad \text{ on } \partial \Omega.
\end{equation}
Then for $1< p< \infty$,
\begin{equation}
\| \phi_\e \nabla u_\e \|_p 
\le C_p \left\{ \| f\|_p + \| F\|_p \right\},
\end{equation}
where $C_p$ is independent of $\e \in (0, 1)$.    
\end{theorem}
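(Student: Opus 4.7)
The plan is to split $u_\e = u_\e^{(1)} + u_\e^{(2)}$, where $\cL_\e u_\e^{(1)} = \text{\rm div}(\phi_\e f)$ and $\cL_\e u_\e^{(2)} = F$ in $\Omega_\e$, both with zero trace on $\partial\Omega$, and to handle each piece separately.

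For $u_\e^{(1)}$ I would apply Shen's real variable method. The $p=2$ case is immediate from testing with $u_\e^{(1)}$ and using ellipticity. For $p>2$, on each ball $2B$ (interior, or touching $\partial\Omega$) one decomposes $u_\e^{(1)} = v + w$ in $2B\cap\Omega_\e$, with $\cL_\e v = 0$ in $2B\cap\Omega_\e$ matching $u_\e^{(1)}$ on $\partial(2B)\cap\Omega$, and $w$ absorbing the local datum $\text{\rm div}(\phi_\e f\mathbf{1}_{2B})$. The interior and boundary Lipschitz estimates established in the course of proving Theorem~\ref{main-thm-1}(i) then yield the reverse-H\"older-type bound
\begin{equation*}
\|\phi_\e \nabla v\|_{L^\infty(B)} \le C \left(\fint_{2B} |\phi_\e \nabla v|^2\right)^{1/2},
\end{equation*}
while energy applied to $w$ gives $\|\phi_\e \nabla w\|_{L^2(2B)} \le C\|f\mathbf{1}_{2B}\|_{L^2}$. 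These are exactly the hypotheses required by Shen's real variable theorem, and they produce $\|\phi_\e \nabla u_\e^{(1)}\|_p \le C\|f\|_p$ for $2<p<\infty$. The range $1<p<2$ follows by duality since $\cL_\e$ is self-adjoint.

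For $u_\e^{(2)}$ I would argue by duality against the preceding step. Given $g \in L^{p'}(\Omega_\e;\R^d)$, let $w$ be the weak solution of $\cL_\e w = -\text{\rm div}(\phi_\e g)$ in $\Omega_\e$ with $w|_{\partial\Omega}=0$. Using the two weak formulations together with the symmetry of $A$,
\begin{equation*}
\int_{\Omega_\e} g \cdot \phi_\e \nabla u_\e^{(2)}\, dx = \int_{\Omega_\e} F\, w\, dx,
\end{equation*}
which is bounded by $\|F\|_p\|w\|_{p'}$. Applying the estimate for $u_\e^{(1)}$ to $w$ itself gives $\|\phi_\e \nabla w\|_{p'} \le C\|g\|_{p'}$, and a uniform-in-$\e$ weighted Poincar\'e inequality $\|w\|_{L^{p'}(\Omega_\e)} \le C\|\phi_\e \nabla w\|_{L^{p'}(\Omega_\e)}$ for functions with $w|_{\partial\Omega}=0$ then gives $\|w\|_{p'} \le C\|g\|_{p'}$. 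Taking $\sup_g$ yields $\|\phi_\e \nabla u_\e^{(2)}\|_p \le C\|F\|_p$.

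The main obstacle is the interior Lipschitz estimate driving the real variable argument, since $\cL_\e v = 0$ is genuinely degenerate near a hole. The remedy is to rescale a ball of radius $\e$ to unit size, turning $\cL_\e$ into a nondegenerate equation on $Y \setminus T$ with a lower-order error controlled precisely by \eqref{cond.phi-2}, and then to run an Avellaneda--Lin compactness scheme adapted to this setting. The boundary Lipschitz estimate at $\partial\Omega$ is comparatively routine because \eqref{H} forces $\phi_\e$ to be bounded below near $\partial\Omega$, reducing matters to the classical uniform regularity theory for periodic elliptic operators. The weighted Poincar\'e inequality invoked in the duality step is a separate ingredient: for $p'=2$ it follows from the uniform positivity of the first Dirichlet eigenvalue of $\cL_\e$ (by homogenization), and the general $p'$ version can be obtained by adapting the same spectral argument.
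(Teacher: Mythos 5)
Your interior analysis and your duality treatment of the $F$-term are essentially sound (the weighted Poincar\'e inequality you invoke is exactly Theorem \ref{thm.unweigthed}, proved for all $1<p<\infty$ by harmonic extension, so no spectral argument is needed), and the closing duality step for $1<p<2$ matches the paper. The genuine gap is your boundary case. You feed the real-variable scheme with the bound $\|\phi_\e\nabla v\|_{L^\infty(B)}\le C(\fint_{2B}|\phi_\e\nabla v|^2)^{1/2}$ also for balls centered on $\partial\Omega$, claiming it comes from Theorem \ref{main-thm-1}(i) or from ``classical uniform regularity theory for periodic elliptic operators.'' But Theorem \ref{main-thm-1}(i) and Remark \ref{rmk.bdaryLip} require $\Omega$ to be $C^{1,\alpha}$, while Theorem \ref{main-thm-2} only assumes $C^1$ or convex; in a $C^1$ domain uniform boundary Lipschitz estimates fail already for harmonic functions, at large scales and at small scales alike (the fact that $\phi_\e\approx 1$ near $\partial\Omega$ only removes the degeneracy, not the roughness of the boundary). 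So the reverse-H\"older hypothesis with exponent $q=\infty$ is simply not available at boundary cubes, and your argument for $2<p<\infty$ does not close under the stated hypotheses.

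This is precisely the point where the paper's proof diverges: for boundary cubes with $r\ge m_0\e$ it does not use any Lipschitz bound for $v_\e$, but instead approximates $v_\e$ by $v_0+\e\chi(x/\e)\cdot\nabla v_0$ via the local convergence rate (Theorem \ref{thm.local.rate}), takes $R_Q=\phi_\e(\nabla v_0+\nabla\chi(x/\e)\cdot\nabla v_0)$ and exploits the $W^{1,q}$ regularity (all $q<\infty$) of the constant-coefficient homogenized problem in $C^1$ or convex domains together with the boundedness of $\phi\nabla\chi$; the discrepancy $U_Q$ carries the small factor $(\e/r)^{1/8}$, which is absorbed by choosing $m_0$ large and applying the real-variable theorem (Theorem \ref{thm.LS.real}) with $0<\eta<\eta_0$ rather than $\eta=0$. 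The scales $r<m_0\e$ are then handled separately by the per-cell small-scale $W^{1,p}$ estimate (Lemma \ref{lem.SmallScale.W1p}), which near $\partial\Omega$ relies on classical $W^{1,p}$ bounds rather than Lipschitz ones. Your splitting $u_\e=u_\e^{(1)}+u_\e^{(2)}$ and the duality reduction of the $F$-term are a harmless reorganization (the paper simply keeps $f$ and $F$ together in the local energy estimate \eqref{est.we.fF}), but without the homogenization-based approximation at boundary cubes the proposal does not prove the theorem as stated.
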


Thanks to the Lax-Milgram Theorem, the boundary value problems
\eqref{main.eq.phief} and \eqref{main.eq.F} are solvable in the energy space $V_\e$; see Section \ref{sec.3}. We 
point out that no boundary condition is needed on $\Gamma_\e=\partial \Omega_\e \setminus \partial \Omega$  due to the strong 
degeneracy of the coefficients near  $\Gamma_\e$. The regularity results in the main theorems can be applied to the Dirichlet eigenfunctions of the operator $-\text{div}(A_\e \nabla)$ in the perforated domain $\Omega_\e$; see \cite{OSY92}.
Indeed, a computation shows that if $-\text{\rm div} (A_\e \nabla u_\e) =\lambda_\e u_\e$ in $\Omega_\e$ and
$u_\e =\phi_\e v_\e$, where $\phi$ is the principal eigenfunction of $-\text{\rm div}(A\nabla) $ in $Y\setminus T$ with 
the principal eigenvalue $\overline{\lambda}$, then
\begin{equation}\label{e-a}
-\text{\rm div} (\phi_\e^2 A_\e\nabla v_\e )  = (\lambda_\e -\e^{-2} \overline{\lambda})  \phi_\e^2 v_\e \quad \text{ in } \Omega_\e.
\end{equation}
In \cite{V81} the observation above was used to show that $\lambda^k_\e = \e^{-2} \overline{\lambda} + \mu^k_0 + o(1) $ as $ \e\ \to 0$,
where $\lambda_\e^k$ is the $k$th Dirichlet eigenvalue of $-\Delta $ in $\Omega_\e$ and
 $\mu^k_0$ the $k$th Dirichlet  eigenvalue for a second-order elliptic operator  with constant coefficients in $\Omega$.
Using some uniform estimates for eigenfunctions, it was proved in \cite{OSY92} that 
$$
|\lambda^k_\e -\e^{-2} \overline{\lambda} - \mu^k_0| \le C \e,
$$
for sufficiently small $\e$.

The proofs of Theorem \ref{main-thm-1} and Theorem \ref{main-thm-2} boil down into estimates at two different scales. At small scales (below $\e$), we need to establish the weighted Lipschitz and $W^{1,p}$ estimates in a cell with $\e=1$. Due to the strong degeneracy of the coefficients (see \eqref{cond.phi-1}), the classical Schauder estimates as well as those for degenerate equations in the existing literature 
cannot apply directly. Instead, we use a technique of Moser's iteration to establish an unweighted $L^\infty$ estimate
for the equation $-\text{\rm div}(\phi^2 A\nabla u) =\phi F$.
A transformation, which reduces the degenerate equation to a nondegenerate equation,
is then used to prove the weighted Lipschitz estimates. This gives us the small-scale Lipschitz estimates and therefore the small-scale $W^{1,p}$ estimates by a real-variable argument. 
At large scales, we first establish a quantitative convergence rate and use it to obtain the uniform estimates above the $\e$-scale. A careful analysis involving the harmonic extension and weighted Sobolev inequalities
is carried out to handle the difficulties caused by the degeneracy of the coefficients and the holes. 
Finally, the excess decay iteration  and the real-variable argument, as standard tools in the homogenization theory, are
used to establish the weighted Lipschitz estimate and $W^{1,p}$ estimate, respectively.
For large-scale regularity estimates for uniformly elliptic operators with oscillating  coefficients,
we refer to the reader to \cite{A-Lin87,KLS2013, AS16,A-Shen16,GNO20} and references therein.

The rest of the paper is organized as follows. In Section \ref{sec.2}, we introduce the weighted Sobolev spaces and establish some useful inequalities of independent interest. In Section \ref{sec.3}, we establish the Lipschitz estimate at small scales for the degenerate elliptic equations. In Section \ref{sec.4}, we use the techniques of homogenization to derive a quantitative convergence rate and a local first-order approximation. In Section \ref{sec.5}, we establish the large-scale Lipschitz estimate and prove Theorem \ref{main-thm-1} part (i). In Section \ref{sec.6}, we establish both the small-scale and large-scale $W^{1,p}$ estimates and prove Theorem \ref{main-thm-2} and Theorem \ref{main-thm-1} part (ii). In Appendix, we list some properties of the smoothing operators and nontangential maximal functions that have been used in this paper.

\noindent\textbf{Acknowledgements.} Z.S. is partially supported by the NSF grant DMS-2153585.  J.Z. is partially supported by grants for Excellent Youth from the NSFC and AMSS-CAS. The authors would like to thank Professors  Hongjie Dong and Tuoc Phan for pointing out the related work in \cite{DP21, DP20}.

\section{Inequalities in weighted Sobolev spaces}\label{sec.2}

 In this section, we establish several useful inequalities in $\phi$- or $\phi_\e$-weighted Sobolev spaces in the $L^p$ settings. 
 Some of the inequalities in the case $p=2$ may be found in \cite{V81, OSY92}. 

Recall that $Y_*=(-1/2, 1/2)^d\setminus T$.
For $1\le p< \infty$, let
$$
L^p_\phi(Y_*) =\left \{ v\in L^1_{\rm loc}(Y_*): v\phi \in L^p(Y_*) \right\},
$$
$$
L^p_{\phi_\e}(\Omega_\e) = \left\{ v\in L^1_{\rm loc}(\Omega_\e): v\phi_\e \in L^p(\Omega_\e) \right\}.
$$
Let $W^{1,p}_{\phi}(Y_*)$ be a $\phi$-weighted Sobolev space defined by
\begin{equation*}
    W^{1,p}_{\phi}(Y_*) = \left\{ v\in L^1_{\rm loc}(Y_*): v \in L^p_\phi(Y_*) \text{ and }  \nabla v \in L^p_\phi(Y_*)^d \right\}.
\end{equation*}
Similarly, define the $\phi_\e$-weighted Sobolev space in $\Omega_\e$ by
\begin{equation*}
    W^{1,p}_{\phi_\e}(\Omega_\e) = \left\{ v\in L^1_{\rm loc}(\Omega_\e):  v \in L^p_{\phi_\e}(\Omega_\e) \text{ and } 
    \nabla v \in L^p_{\phi_\e}(\Omega_\e)^d\right\}.
\end{equation*}
Let $W^{1,p}_{\phi_\e,0}(\Omega_\e)$ denote the subspace of $W^{1,p}_{\phi_\e}(\Omega_\e)$ that contains the functions vanishing on $\partial \Omega$.


A key technique for studying the weighted Sobolev spaces is the harmonic extension. For each hole $\tau_i \subset T$, let $\tau'_i$ be the extended smooth hole such that $\tau_i \subset \tau_i'$ and 
$$
\dist(\tau_i',\tau_j') \ge c_0/4, \quad  \dist(\tau_i',\partial Y) \ge c_0/4, \quad \dist(\partial \tau_i', \tau_i) \ge c_0/4.
$$
Let $T' = \cup_{i=1}^m \overline{\tau_i'} \subset Y$ be the union of the extended holes and  $Y_*' = (-1/2, 1/2)^d\setminus T'$.
The harmonic extension operator 
$$
\mathcal{E}: W^{1,p}(Y_*') \to W^{1,p}(Y)
$$
is a linear operator defined as follows: for every $f\in W^{1,p}(Y'_*)$, $\mathcal{E} (f) = f$ in $Y'_*$,
$\Delta \mathcal{E} (f) = 0$ in $T'$ and $\mathcal{E} (f) = f$ on $\partial T'$ in the sense of trace. In this case, we say $\mathcal{E}(f)$ is the harmonic extension of $f$ from $Y'_*$ to $Y$.

\begin{lemma}\label{lem.Gradient Ext}
    For each $1< p<\infty$, the operator $\mathcal{E}: W^{1,p}(Y_*') \to W^{1,p}(Y)$ is bounded. Moreover,
    \begin{equation*}
        \| \nabla \mathcal{E}(f) \|_{L^p(Y)} \le C \| \nabla f\|_{L^p(Y_*')}.
    \end{equation*}
\end{lemma}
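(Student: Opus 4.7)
The plan is to reduce the estimate inside each extended hole $\tau_i'$ to a local boundary-data estimate and then use the Poincar\'e inequality to replace boundary values of $f$ by its gradient. Since $\mathcal{E}(f)=f$ on $Y_*'$, the only new contribution to $\|\nabla\mathcal{E}(f)\|_{L^p(Y)}$ comes from integrals over the extended holes. Because $\dist(\tau_i',\tau_j')\ge c_0/4$ and $\dist(\tau_i',\partial Y)\ge c_0/4$, I can pick pairwise disjoint open neighborhoods $U_i\subset Y$ of $\overline{\tau_i'}$ such that each annular collar $V_i:=U_i\setminus\overline{\tau_i'}$ is a smooth bounded domain contained in $Y_*'$.

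For each $i$, let $c_i$ be the mean value of $f$ over $V_i$ and set $g_i=f-c_i$. Since harmonic extension is linear and fixes constants, $\mathcal{E}(f)-c_i$ is harmonic on $\tau_i'$ with boundary trace $g_i|_{\partial\tau_i'}$; in particular $\nabla\mathcal{E}(f)=\nabla(\mathcal{E}(f)-c_i)$ on $\tau_i'$.

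Next I would invoke the classical $W^{1,p}$ theory for the Dirichlet problem for $\Delta$ on the smooth bounded domain $\tau_i'$ with boundary data in the fractional trace space $W^{1-1/p,p}(\partial\tau_i')$:
\begin{equation*}
\|\mathcal{E}(f)-c_i\|_{W^{1,p}(\tau_i')}\le C\,\|g_i\|_{W^{1-1/p,p}(\partial\tau_i')}.
\end{equation*}
By the standard trace inequality on the smooth collar $V_i$, followed by the Poincar\'e inequality (which applies since $c_i$ is the mean of $f$ over $V_i$),
\begin{equation*}
\|g_i\|_{W^{1-1/p,p}(\partial\tau_i')}\le C\,\|g_i\|_{W^{1,p}(V_i)}\le C\,\|\nabla f\|_{L^p(V_i)}.
\end{equation*}
Combining and summing over $i$, using that the $V_i$ are disjoint subsets of $Y_*'$,
\begin{equation*}
\|\nabla\mathcal{E}(f)\|_{L^p(T')}^p\le C\sum_{i=1}^m\|\nabla f\|_{L^p(V_i)}^p\le C\,\|\nabla f\|_{L^p(Y_*')}^p,
\end{equation*}
and adding $\|\nabla f\|_{L^p(Y_*')}^p$ for the trivial contribution on $Y_*'$ yields the stated gradient bound. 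Boundedness of $\mathcal{E}$ in the full $W^{1,p}$ norm follows by the same reasoning without the constant subtraction, bounding $|c_i|$ by $\|f\|_{L^p(V_i)}$ and absorbing the lower-order term into $\|f\|_{W^{1,p}(Y_*')}$.

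The main obstacle is asserting the $W^{1,p}$ Dirichlet estimate on the smooth hole $\tau_i'$ with fractional boundary data. For $p=2$ this is elementary (Lax--Milgram plus the ordinary trace theorem), whereas for general $1<p<\infty$ it is a classical consequence of Agmon--Douglis--Nirenberg type $W^{1,p}$ regularity on smooth domains (equivalently, of the layer-potential theory for the Laplacian). The remaining ingredients---the Poincar\'e inequality on the smooth collar $V_i$ and the trace inequality into $W^{1-1/p,p}(\partial\tau_i')$---are entirely standard for smooth bounded domains, so the only subtlety of the proof is packaging these ingredients together with the decomposition $Y=Y_*'\cup T'$.
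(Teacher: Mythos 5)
Your argument is correct, but it is packaged differently from the paper's. The paper does not localize to collars or use fractional trace spaces: it takes a Sobolev extension $\tilde f\in W^{1,p}(Y)$ of $f$ (classical extension theorem across the smooth boundary of $T'$), observes that $v_f=\mathcal{E}(f)-\tilde f$ solves $-\Delta v_f=\nabla\cdot\nabla\tilde f$ in $T'$ with zero Dirichlet data, applies the $W^{1,p}$ estimate for that zero-boundary-data problem, and finally passes from $\|f\|_{W^{1,p}(Y_*')}$ to $\|\nabla f\|_{L^p(Y_*')}$ by subtracting the single constant $L=\fint_{Y_*'}f$ and using the Poincar\'e inequality on the connected set $Y_*'$. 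You instead work hole by hole, estimate the harmonic extension by the $W^{1-1/p,p}(\partial\tau_i')$ norm of the boundary data, and kill the constant locally via the Poincar\'e inequality on a collar $V_i$; both routes ultimately rest on the same classical $L^p$ Dirichlet theory for the Laplacian on a smooth domain (your trace-space estimate is proved precisely by lifting the boundary data and reducing to the zero-data case, which is the paper's step). What your version buys is a more local conclusion — $\nabla\mathcal{E}(f)$ in each hole is controlled by $\nabla f$ in a nearby annulus only — at the cost of invoking fractional trace spaces and of having to arrange connected collars $V_i$ (fine here since the extended holes can be taken with connected boundary, but worth stating); the paper's version is shorter and avoids these technicalities by using the global extension operator and the connectedness of $Y_*'$ once.
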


\begin{proof}
Let $f\in W^{1, p}(Y^\prime_*)$.
    Since $T$ is $C^1$, the classical extension theorem in Sobolev spaces implies that there exists $\tilde{f} \in W^{1,p}(Y)$ such that $\tilde{f} = f$ in $Y'_*$. Moreover, $\| \tilde{f} \|_{W^{1,p}(Y)} \le C\| f \|_{W^{1,p}(Y'_*)}$.
    Let $u_f = \mathcal{E}(f)$ be the solution of the 
    Dirichlet problem $-\Delta u_f = 0$ in $T'$ and $u_f = f$ on $\partial T'$ in the sense of trace. Note that $v_f:= u_f - \tilde{f}$ satisfies $-\Delta v_f = \nabla\cdot \nabla \tilde{f}$ in $T'$ and $v_f = 0$ on $\partial T'$. Now the $W^{1,p}$ estimate leads to 
    $$\| \nabla v_f \|_{L^p(T')} \le C \| \nabla \tilde{f} \|_{L^p(T')} \le C\| f \|_{W^{1,p}(Y'_*)}.
    $$
    By the triangle inequality,
    \begin{equation}\label{est.uf}
        \| \nabla u_f \|_{L^p(T')} \le \| \nabla v_f \|_{L^p(T')} + \| \nabla \tilde{f} \|_{L^p(T')} \le C\| f \|_{W^{1,p}(Y'_*)}.
    \end{equation}
    Note that $u_f$ is the harmonic extension of $f$ from $Y'_*$ to $Y$. Then for any constant $L$, $u_f - L$ is the harmonic extension of $f-L$. Let $L = \fint_{Y'_*} f$ and apply \eqref{est.uf} to $f-L$, we have
    \begin{equation*}
        \| \nabla u_f \|_{L^p(T')} \le C\| f -L \|_{W^{1,p}(Y'_*)} \le C\| \nabla f \|_{L^p(Y'_*)},
    \end{equation*}
    where we have used the fact that $Y_*^\prime$ is connected.
    Since $\mathcal{E}(f) = f$ in $Y'_*$ and $\mathcal{E}(f) = u_f$ in $T'$, the last inequality yields the desired estimate.
\end{proof}


The next two lemmas give Sobolev embedding theorems in the weighted spaces $W^{1,p}_{\phi}(Y_*)$. 


\begin{lemma}\label{lem.local Poincare}
    Let $1\le p<\infty$.  Then there exists a constant $C$ such that
    \begin{equation*}
        \| u \|_{L^p(Y_*)} \le C\| \phi \nabla u \|_{L^p(Y_*)} + C\| u \|_{L^p(Y'_*)}
    \end{equation*}
    for any $u\in W^{1,p}_{\phi}(Y_*)$.
\end{lemma}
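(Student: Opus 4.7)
The strategy is to subtract off a harmonic extension of $u|_{Y_*'}$ and then apply a weighted one-dimensional Hardy inequality on the collar $T'\setminus T$, where the weight $\phi\approx\dist(\cdot,\partial T)$ degenerates. Since $\dist(Y_*',T)\ge c_0/4$, the weight $\phi$ is bounded below by a positive constant on $Y_*'$, so $u|_{Y_*'}\in W^{1,p}(Y_*')$ with $\|\nabla u\|_{L^p(Y_*')}\le C\|\phi\nabla u\|_{L^p(Y_*)}$. Let $\tilde u:=\mathcal{E}(u|_{Y_*'})$ be the harmonic extension supplied by Lemma~\ref{lem.Gradient Ext}; combined with Poincaré on $Y$ and $\tilde u|_{Y_*'}=u|_{Y_*'}$, that lemma yields
$$\|\tilde u\|_{L^p(Y)}+\|\nabla\tilde u\|_{L^p(Y)}\le C\bigl(\|u\|_{L^p(Y_*')}+\|\phi\nabla u\|_{L^p(Y_*)}\bigr).$$
Set $w:=u-\tilde u$ on $Y_*$; then $w\equiv 0$ on $Y_*'$, and in particular $w|_{\partial T'}=0$. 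Since
$$\|u\|_{L^p(Y_*)}^p\le \|u\|_{L^p(Y_*')}^p+2^{p-1}\bigl(\|w\|_{L^p(T'\setminus T)}^p+\|\tilde u\|_{L^p(T'\setminus T)}^p\bigr),$$
the problem reduces to the Hardy-type estimate
$$\int_{T'\setminus T}|w|^p\,dx\le C\int_{T'\setminus T}|\phi\nabla w|^p\,dx,$$
after which $\|\phi\nabla w\|_{L^p(T'\setminus T)}\le C\|\phi\nabla u\|_{L^p(Y_*)}+C\|\nabla\tilde u\|_{L^p(Y)}$ closes the bound.

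For this Hardy estimate I would split $T'\setminus T$ according to $\dist(\cdot,\partial T)$. On the outer region $\{\dist(\cdot,\partial T)\ge r_0\}\cap(T'\setminus T)$ (with a fixed $r_0\le c_0/8$), the weight $\phi$ is bounded below by a positive constant, so the standard Poincaré inequality for $w$ (using $w|_{\partial T'}=0$) controls $\int w^p$ by $\int|\nabla w|^p\le C\int|\phi\nabla w|^p$ there. On the inner tube $N_{r_0}:=\{\dist(\cdot,\partial T)<r_0\}\cap(T'\setminus T)$, I would use normal coordinates $(y,t)\in\partial T\times(0,r_0)$, under which the Jacobian is bounded above and below and $\phi\approx t$; applying a smooth cutoff $\zeta$ equal to $1$ on $[0,r_0/2]$ and vanishing at $t=r_0$, the function $v(t):=\zeta(t)w(y,t)$ satisfies $v(r_0)=0$, and one invokes the one-dimensional weighted Hardy inequality
$$\int_0^L|v(t)|^p\,dt\le p^p\int_0^L t^p|v'(t)|^p\,dt\qquad\text{for } v\in C^1([0,L]),\ v(L)=0.$$
The error produced by $\zeta'$ is supported in $\{r_0/2<\dist(\cdot,\partial T)<r_0\}$, where $\phi$ is already bounded below, and so is absorbed into the outer Poincaré estimate above. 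The 1D inequality itself follows from integrating by parts with $1=(t)'$ (both boundary contributions vanish: at $t=0$ because of the factor $t$, at $t=L$ because $v(L)=0$), yielding $\int_0^L|v|^p\,dt = -p\int_0^L t\,|v|^{p-1}\sgn(v)v'\,dt$, followed by Hölder with exponents $p/(p-1)$ and $p$.

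The main technical obstacle is the geometric bookkeeping in this Hardy step: verifying that the normal-coordinate parametrization of $N_{r_0}$ has bounded Jacobian for the smooth holes of the paper, and patching the cutoff-generated error into the outer Poincaré estimate in a way that produces only constant multiples of the right-hand side. A standard density argument reduces the general $L^p$ statement for $u\in W^{1,p}_\phi(Y_*)$ to the smooth case.
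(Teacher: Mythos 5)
Your proof is correct, but it follows a different route from the paper's. The paper proves the lemma directly: it flattens the boundary of each hole locally and reduces everything to a single \emph{inhomogeneous} one-dimensional inequality, $\int_0^{1/2}|v|^p\,dt\le C\int_0^1|t v'|^p\,dt+C\int_{1/2}^1|v|^p\,dt$, which it proves with a cutoff $\eta$, the fundamental theorem of calculus applied to $|v|^p\eta$, Fubini and Young's inequality; the far-field term $\int_{1/2}^1|v|^p$ lands directly in $\|u\|_{L^p(Y_*')}$, so no extension operator and no decomposition of $u$ are needed. You instead subtract the harmonic extension $\tilde u=\mathcal{E}(u|_{Y_*'})$ from Lemma~\ref{lem.Gradient Ext} (which is legitimate, since that lemma precedes this one and does not use it), reduce to a \emph{homogeneous} Hardy bound for $w=u-\tilde u$ vanishing on $\partial T'$, and then run the sharp 1D Hardy inequality in normal coordinates together with a cutoff and an outer Poincar\'e estimate. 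Your version buys a cleaner, scale-sharp 1D inequality and makes the role of the zero boundary data explicit, at the cost of extra machinery (the extension operator, the normal-coordinate/Jacobian bookkeeping, and the cutoff-error absorption) that the paper's inhomogeneous 1D inequality renders unnecessary; in fact the harmonic extension could be dispensed with even in your scheme, since the cutoff error lives where $\phi\gtrsim 1$ and can be tied to $\|u\|_{L^p(Y_*')}$ by an ordinary Poincar\'e inequality on $\{\dist(\cdot,T)\ge r_0/2\}$. The remaining technical points you flag (bounded Jacobian of the normal parametrization for smooth holes, and reduction to smooth or ACL representatives) are genuinely minor and are handled just as implicitly in the paper's own flattening argument.
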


\begin{proof}
    Since $\phi(x) \approx \dist(x,T)$,
    the inequality is essentially a version of Hardy's inequality. We give an elementary proof. By flatting the boundary of each hole locally, it suffices to show a one-dimensional estimate:
    \begin{equation}\label{est.1DHardy}
        \int_0^\frac12 |v(t)|^p dt \le C\int_0^1 |t v'(t)|^p dt + C\int_{\frac12}^1 |v(t)|^p dt.
    \end{equation}
    Let $\eta$ be a smooth cutoff function such that $\eta = 1$ in $[0,1/2]$,  $\eta(1) = 0$ and $|\eta'| \le 3$. Then for $t\in (0,1/2)$,
    \begin{equation*}
        |v(t)|^p = \bigg|\int_t^1 (|v(s)|^p \eta(s))'ds\bigg| \le C \int_t^1 |v(s)|^{p-1}|v'(s)| ds + C\int_{\frac12}^1 |v(s)|^p ds.
    \end{equation*}
    Thus, by the Fubini theorem and the Young inequality,
    \begin{equation*}
    \begin{aligned}
        \int_0^\frac12 |v(t)|^p dt & \le C\int_0^{1}  |v(s)|^{p-1} s|v'(s)| ds + C\int_{\frac12}^1 |v(s)|^p ds \\
        & \le \frac12 \int_0^1 |v(s)|^p ds + C\int_0^1 |sv'(s)|^p ds + C\int_{\frac12}^1 |v(s)|^p ds.
    \end{aligned}
    \end{equation*}
    This gives the desired inequality \eqref{est.1DHardy}.
\end{proof}

For convenience, for $p\in [1,d)$, let $p^*$ be given by $\frac{1}{p^*} = \frac{1}{p} - \frac{1}{d}$.

\begin{lemma}\label{lem.localSobolev}
    Let $1\le p<\infty$. Assume $u\in W^{1,p}_{\phi}(Y_*)$. 
    \begin{itemize}
        \item[(i)] For $1\le p<d$, there exists a constant $C$ such that
    \begin{equation}\label{est.local Sobolev}
        \| \phi u \|_{L^{p^*}(Y_*)} \le C\| \phi \nabla u \|_{L^p(Y_*)} + C\| u \|_{L^p( Y'_*)}.
    \end{equation}

     \item[(ii)] For $p>d$, there exists a constant $C$ such that
    \begin{equation}\label{est.Ca.phiu}
        \| \phi u \|_{C^\alpha(Y_*)} \le C\| \phi \nabla u \|_{L^p(Y_*)} + C\| u \|_{L^p(Y'_*)},
    \end{equation}
    where $\alpha = 1-\frac{d}{p}$.
    \end{itemize}
\end{lemma}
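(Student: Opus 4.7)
The plan is to reduce both estimates to the classical Sobolev embedding on the full cube $Y$ by extending $\phi u$ by zero across the holes $T$. Since $\phi\in C^{1,\alpha}(\overline{Y\setminus T})$ with $\phi\approx \dist(\cdot,T)$, we have $\phi,\nabla\phi\in L^\infty(Y_*)$ and $\phi|_{\partial T}=0$. Given $u\in W^{1,p}_\phi(Y_*)$, Lemma \ref{lem.local Poincare} already produces $u\in L^p(Y_*)$, with the target bound
$$
\|u\|_{L^p(Y_*)} \le C\|\phi\nabla u\|_{L^p(Y_*)}+C\|u\|_{L^p(Y_*')},
$$
and consequently $\phi u,\ \phi\nabla u,\ u\nabla\phi\in L^p(Y_*)$, so that $\phi u\in W^{1,p}(Y_*)$.

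Let $w$ denote the extension of $\phi u$ by zero across $T$. The first step is to verify that $w\in W^{1,p}(Y)$ with weak gradient $\nabla w=(\phi\nabla u+u\nabla\phi)\chi_{Y_*}$. To do so I would introduce a smooth cutoff $\eta_\delta$ with $\eta_\delta=0$ on $\{\dist(\cdot,T)<\delta/2\}$, $\eta_\delta=1$ on $\{\dist(\cdot,T)>\delta\}$, and $|\nabla\eta_\delta|\le C/\delta$, and consider $w_\delta=\phi u\,\eta_\delta$ extended by zero. Each $w_\delta$ lies trivially in $W^{1,p}(Y)$, and since $\phi\le C\delta$ on the support of $\nabla\eta_\delta$,
$$
\|\phi u\,\nabla\eta_\delta\|_{L^p(Y)}\le C\|u\|_{L^p(\{\delta/2<\dist(\cdot,T)<\delta\})}\longrightarrow 0\quad\text{as }\delta\to 0
$$
by absolute continuity of the integral, while the remaining terms converge to $\phi\nabla u+u\nabla\phi$ in $L^p$. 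Hence $w_\delta\to w$ in $W^{1,p}(Y)$ and the claim follows. Using $\|\phi\|_\infty+\|\nabla\phi\|_\infty\le C$ and Lemma \ref{lem.local Poincare},
$$
\|w\|_{W^{1,p}(Y)} \le C\|\phi\nabla u\|_{L^p(Y_*)}+C\|u\|_{L^p(Y_*)} \le C\|\phi\nabla u\|_{L^p(Y_*)}+C\|u\|_{L^p(Y_*')}.
$$

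Finally I would invoke the classical Sobolev embedding on the Lipschitz domain $Y$. For part (i), with $1\le p<d$, the standard inequality $\|w\|_{L^{p^*}(Y)}\le C\|w\|_{W^{1,p}(Y)}$ combined with $w=\phi u$ on $Y_*$ yields \eqref{est.local Sobolev}. For part (ii), with $p>d$, the embedding $W^{1,p}(Y)\hookrightarrow C^\alpha(\overline{Y})$ with $\alpha=1-d/p$ gives $\|\phi u\|_{C^\alpha(Y_*)}\le \|w\|_{C^\alpha(\overline{Y})}\le C\|w\|_{W^{1,p}(Y)}$, producing \eqref{est.Ca.phiu}. The only nontrivial step is the zero extension in Step 1, since $u$ itself need not possess a Sobolev trace on $\partial T$ (we only control $\phi\nabla u$, not $\nabla u$). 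The cutoff argument circumvents this obstacle by using that $\phi$ vanishes linearly on $\partial T$, which exactly cancels the factor $|\nabla\eta_\delta|\sim \delta^{-1}$ in the dangerous commutator term.
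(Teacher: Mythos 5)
Your proof is correct, and at its core it uses the same mechanism as the paper: write $\nabla(\phi u)=\phi\nabla u+u\nabla\phi$, control the zeroth-order term via Lemma \ref{lem.local Poincare}, and exploit that $\phi u$ vanishes at the holes. The difference is in how that vanishing is used: the paper applies a Sobolev--Poincar\'e inequality directly in the perforated cell $Y_*$, asserting informally that $\phi u=0$ on $\partial T$, whereas you extend $\phi u$ by zero across $T$ and invoke the classical embeddings $W^{1,p}(Y)\hookrightarrow L^{p^*}(Y)$ and $W^{1,p}(Y)\hookrightarrow C^{\alpha}(\overline{Y})$ on the full cube. Your cutoff argument, with $\phi\le C\delta$ on the support of $\nabla\eta_\delta$ cancelling $|\nabla\eta_\delta|\sim\delta^{-1}$, is in fact a rigorous justification of the zero-extension (equivalently, of the trace statement the paper takes for granted), so your route is slightly longer but more self-contained; the paper's route avoids the extension step at the cost of quoting a Sobolev--Poincar\'e inequality on $Y_*$ with vanishing on only part of the boundary.
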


\begin{proof}
Let $1\le  p< d$.
    By Lemma \ref{lem.local Poincare}, $u\in L^p(Y_*)$, which, together with the fact $\phi(x)\approx \text{\rm dist}(x, T)$, implies that
    $\phi u = 0$ on $\partial T$. 
    We apply the Sobolev-Poincar\'{e} inequality in $Y_*$ to obtain
    \begin{equation*}
    \begin{aligned}
        \| \phi u \|_{L^{p^*} (Y_*)} & \le C\| \nabla(\phi u ) \|_{L^p(Y_*)} \\
        & \le C\| \phi \nabla u \|_{L^p(Y_*)} + C\| u \|_{L^p(Y_*)} \\
        & \le C\| \phi \nabla u \|_{L^p(Y_*)} + C\| u \|_{L^p(Y'_*)},
    \end{aligned}
    \end{equation*}
    where we have used Lemma \ref{lem.local Poincare} and the assumption $\|\nabla \phi\|_{L^\infty(Y_*)} \le C$.
     This proves (i). The proof of (ii) is similar.
\end{proof}

Now we establish several useful weighted inequalities in the perforated domain $\Omega_\e$. The main idea is a technique 
of ``multiscale inequalities'', which means that a general inequality over $\Omega_\e$ is broken down into inequalities in each $\e$-periodic cells, together with a global estimate in $\Omega$ with no holes.
Let
\begin{equation*}
     T'_\e = \bigcup_{z\in \Z^d} \e(T'+z) \quad \text{ and } \quad   \Omega'_\e = \Omega \setminus T'_\e.
\end{equation*}
Without loss of generality, we assume that the extended holes $\e(z+T')$ do not intersect with the boundary $\partial \Omega$.
This allows us to define the harmonic extension for any function $u \in W^{1,p}(\Omega'_\e)$. Precisely, let $\mathcal{E}_\e: W^{1,p}(\Omega'_\e) \to W^{1,p}(\Omega)$ be the harmonic extension operator such that $\mathcal{E}_\e(u) = u$ in $\Omega'_\e$ and $\Delta \mathcal{E}_\e(u) = 0$ in $T'_\e \cap \Omega$.



\begin{theorem}
[Embedding from weighted to unweighted]\label{thm.unweigthed}
    Let $1< p<\infty$. There exists $C>0$ such that for any $u\in W^{1,p}_{\phi_\e,0}(\Omega_\e)$,
    \begin{equation*}
        \| u \|_{L^p(\Omega_\e)} \le C\| \phi_\e \nabla u \|_{L^p(\Omega_\e)}.
    \end{equation*}
\end{theorem}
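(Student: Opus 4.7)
The strategy follows the ``multiscale'' philosophy stated in the introduction: combine the local weighted Poincar\'e inequality (Lemma~\ref{lem.local Poincare}) rescaled in each $\e$-periodic cell with a global Poincar\'e inequality on $\Omega$, patched together via the harmonic extension operator $\mathcal{E}_\e$. The weight $\phi_\e$ vanishes only in the $\e$-neighborhoods of the holes $T_\e$; on the complement $\Omega'_\e$ it is bounded below uniformly in $\e$, and this fact will ultimately let us absorb the residual terms.

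\emph{Local step.} For each $z\in\Z^d$ with $\e(z+Y)\subset\Omega$, I would apply Lemma~\ref{lem.local Poincare} to the rescaled function $v(x)=u(\e(z+x))\in W^{1,p}_\phi(Y_*)$ and change variables back. This yields
$$
\int_{\e(z+Y_*)} |u|^p\,dy \le C\e^p \int_{\e(z+Y_*)} |\phi_\e\nabla u|^p\,dy + C\int_{\e(z+Y'_*)} |u|^p\,dy.
$$
For cells meeting $\partial\Omega$, hypothesis~\eqref{H} together with the convention that extended holes lie strictly inside $\Omega$ ensures no hole is cut by the boundary; in particular, in every cell whose extended hole sits outside $\Omega$ one has $\phi_\e\ge c_1>0$ on $\Omega\cap\e(z+Y)\subset\Omega'_\e$, so the above estimate holds trivially. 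Summing over $z$ gives
$$
\|u\|_{L^p(\Omega_\e)}^p \le C\e^p\,\|\phi_\e\nabla u\|_{L^p(\Omega_\e)}^p + C\,\|u\|_{L^p(\Omega'_\e)}^p.
$$

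\emph{Global step.} To absorb $\|u\|_{L^p(\Omega'_\e)}$, introduce $\tilde u := \mathcal{E}_\e u \in W^{1,p}(\Omega)$. Since $T'_\e\cap\Omega$ is compactly contained in $\Omega$, $\tilde u$ inherits the vanishing trace of $u$ on $\partial\Omega$, so the classical Poincar\'e inequality yields $\|\tilde u\|_{L^p(\Omega)}\le C\|\nabla\tilde u\|_{L^p(\Omega)}$. A cell-wise, suitably rescaled application of Lemma~\ref{lem.Gradient Ext} then gives $\|\nabla\tilde u\|_{L^p(\Omega)}\le C\|\nabla u\|_{L^p(\Omega'_\e)}$. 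Because $\phi(x)\approx\dist(x,T)$ and $\dist(\partial T',T)\ge c_0/4$ by construction, the weight satisfies $\phi_\e\ge c_1>0$ on $\Omega'_\e$ uniformly in $\e$, hence $\|\nabla u\|_{L^p(\Omega'_\e)}\le C\|\phi_\e\nabla u\|_{L^p(\Omega_\e)}$. Chaining these estimates gives $\|u\|_{L^p(\Omega'_\e)}\le C\|\phi_\e\nabla u\|_{L^p(\Omega_\e)}$, and substituting into the local step estimate (noting $\e^p\le 1$) completes the proof.

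\emph{Main obstacle.} The subtle point is the handling of cells meeting $\partial\Omega$. Assumption~\eqref{H}, reinforced by the convention that extended holes lie strictly inside $\Omega$, is exactly what prevents partial holes from being severed by the boundary; this is what guarantees that $\mathcal{E}_\e u$ preserves the vanishing trace on $\partial\Omega$ (so the global Poincar\'e applies) and that the rescaled versions of Lemmas~\ref{lem.local Poincare} and~\ref{lem.Gradient Ext} run uniformly in $\e$. Without \eqref{H}, the interplay between truncated holes and the boundary layer would force a much more delicate analysis.
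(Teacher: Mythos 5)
Your proposal is correct and follows essentially the same route as the paper: the rescaled local Poincar\'e inequality (Lemma \ref{lem.local Poincare}) summed over cells, combined with the harmonic extension $\mathcal{E}_\e$, the cell-wise gradient bound of Lemma \ref{lem.Gradient Ext}, the lower bound $\phi_\e \gtrsim 1$ on $\Omega'_\e$, and the classical Poincar\'e inequality for $\tilde u = \mathcal{E}_\e u$ with vanishing trace on $\partial\Omega$. The only difference is cosmetic (you run the local step before the global absorption, the paper does the reverse), so nothing further is needed.
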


\begin{proof}
    We apply a technique of ``multiscale inequalities''. First, we see that $u\in W^{1,p}_{\phi_\e,0}(\Omega_\e)$ implies $u\in W^{1,p}(\Omega'_\e)$. Let $\tilde{u} = \mathcal{E}_\e(u)$ be the harmonic extension of $u$. Lemma \ref{lem.Gradient Ext} implies
    \begin{equation*}
        \int_{\e(z+Y) \cap \Omega} |\nabla \tilde{u}|^p \le C\int_{\e(z+Y'_*) \cap \Omega} |\nabla u|^p \le C\int_{\e(z+Y'_*) \cap \Omega} \phi_\e^p|\nabla u|^p.
    \end{equation*}
    Summing over $z$ (including the boundary cells), we obtain
    \begin{equation}\label{est.extu}
        \int_{\Omega} |\nabla \tilde{u}|^p \le C\int_{\Omega_\e} \phi_\e^p|\nabla u|^p.
    \end{equation}
    Note that $\partial\Omega$ does not intersect with $\e(z+T'_*)$. Thus,  $\tilde{u} = u = 0$ on $\partial \Omega$. The Poincar\'{e} inequality gives
    \begin{equation}\label{est.up2phiDup}
        \int_{\Omega'_\e} |u|^p \le \int_{\Omega} |\tilde{u}|^p \le C\int_{\Omega} |\nabla \tilde{u}|^p\le C\int_{\Omega_\e} \phi_\e^p|\nabla u|^p.
    \end{equation}

    Next, we consider each cell $\e(z+Y_*)$  in $\Omega_\e$. By Lemma \ref{lem.local Poincare} and rescaling,
    \begin{equation*}
        \int_{\e(z+Y_*) \cap \Omega} |u|^p \le C\e^p \int_{\e(z+Y_*) \cap \Omega} \phi_\e^p |\nabla u|^p + C\int_{\e(z+Y'_*) \cap \Omega} |u|^p.
    \end{equation*}
    Summing over $z$, we get
    \begin{equation}\label{est.MultiscalePoincare0}
        \int_{\Omega_\e} |u|^p \le C\e^p \int_{\Omega_\e} \phi_\e^p |\nabla u|^p + C\int_{\Omega'_\e} |u|^p \le C\int_{\Omega_\e} \phi_\e^p|\nabla u|^p,
    \end{equation}
    where we have used \eqref{est.up2phiDup} in the last inequality. The proof is complete.
\end{proof}

\begin{remark}
    The inequality (taken from \eqref{est.MultiscalePoincare0})
    \begin{equation}\label{est.Multiscale.Poincare}
        \int_{\Omega_\e} |u|^p \le C\e^p \int_{\Omega_\e} \phi_\e^p |\nabla u|^p + C\int_{\Omega'_\e} |u|^p
    \end{equation}
    can be viewed as a version of the so-called multiscale Poincar\'{e} inequality. The $\e$ appearing before the gradient term is not classical and it is related the multiscale feature of the inequality.
\end{remark}

\begin{theorem}
[Embedding from weighted to unweighted II]\label{thm.unweigthed2}
    Let $1< p<\infty$. There exists $C>0$ such that for any $u\in W^{1,p}_{\phi_\e}(\Omega_\e)$,
    \begin{equation}\label{est.uLp}
        \| u \|_{L^p(\Omega_\e)} \le C\e \| \phi_\e \nabla u \|_{L^p(\Omega_\e)} + C \| \phi_\e u \|_{L^p(\Omega_\e)},
    \end{equation}
    and
    \begin{equation}\label{est.u-L.Lp}
        \inf_{L\in \R} \| u - L \|_{L^p(\Omega_\e)} \le C\| \phi_\e \nabla u \|_{L^p(\Omega_\e)}.
    \end{equation}
\end{theorem}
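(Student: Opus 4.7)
The plan is to reduce both inequalities to the multiscale Poincar\'e inequality \eqref{est.Multiscale.Poincare} already extracted in the proof of Theorem \ref{thm.unweigthed}, which reads, for any $v \in W^{1,p}_{\phi_\e}(\Omega_\e)$,
\begin{equation*}
\int_{\Omega_\e} |v|^p \le C\e^p \int_{\Omega_\e} \phi_\e^p |\nabla v|^p + C\int_{\Omega'_\e} |v|^p.
\end{equation*}
This inequality is obtained simply by summing the rescaled Lemma \ref{lem.local Poincare} over the $\e$-cells and uses no boundary condition on $v$. Before touching either part of the theorem, I would record the observation that drives both: on $\Omega'_\e$ the weight $\phi_\e$ is uniformly bounded below by some $c' > 0$. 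Indeed, for $x \in \e(z+Y'_*)$, periodicity gives $\phi_\e(x) = \phi(x/\e - z)$, and $x/\e - z \in Y \setminus T'$ satisfies $\dist(x/\e - z, T) \ge c_0/4$, so the comparability $\phi \approx \dist(\cdot, T)$ delivers the lower bound.

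For \eqref{est.uLp}, I would apply the multiscale Poincar\'e inequality with $v = u$ and estimate the boundary term by
$\int_{\Omega'_\e} |u|^p \le (c')^{-p} \int_{\Omega'_\e} \phi_\e^p |u|^p \le C\|\phi_\e u\|_{L^p(\Omega_\e)}^p$. Taking $p$-th roots then gives \eqref{est.uLp} immediately.

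For \eqref{est.u-L.Lp}, the key task is to produce the right constant $L$. My plan is to use the cellwise harmonic extension $\tilde u = \mathcal{E}_\e(u) \in W^{1,p}(\Omega)$ exactly as in the proof of Theorem \ref{thm.unweigthed}. Lemma \ref{lem.Gradient Ext} summed over $\e$-cells, combined with $\phi_\e \ge c'$ on $\Omega'_\e$ (which lets me absorb $\|\nabla u\|_{L^p(\Omega'_\e)}$ into $\|\phi_\e \nabla u\|_{L^p(\Omega_\e)}$), yields $\|\nabla \tilde u\|_{L^p(\Omega)} \le C\|\phi_\e \nabla u\|_{L^p(\Omega_\e)}$. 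I would then take $L = \fint_\Omega \tilde u$ and invoke the classical Poincar\'e--Wirtinger inequality on $\Omega$ to get $\|\tilde u - L\|_{L^p(\Omega)} \le C\|\nabla \tilde u\|_{L^p(\Omega)}$. Since $\tilde u = u$ on $\Omega'_\e$, this restricts to $\|u - L\|_{L^p(\Omega'_\e)} \le C\|\phi_\e \nabla u\|_{L^p(\Omega_\e)}$. Plugging this into the multiscale Poincar\'e inequality applied to $v = u - L$ (for which $\nabla v = \nabla u$) gives \eqref{est.u-L.Lp}.

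The main obstacle is really a bookkeeping one for \eqref{est.u-L.Lp}: ensuring that the global harmonic extension step delivers an \emph{unweighted} gradient controlled by the \emph{weighted} gradient on $\Omega_\e$. This rests on the observation that, although $\phi_\e$ degenerates on $\Gamma_\e$, it is uniformly bounded below on the fattened perforated set $\Omega'_\e$, which couples the cell-level harmonic extensions of Lemma \ref{lem.Gradient Ext} to the weighted norm in a clean way. No boundary condition on $u$ at $\partial \Omega$ is required, which is precisely why an additive constant $L$ appears in \eqref{est.u-L.Lp}.
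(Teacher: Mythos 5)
Your proposal is correct and follows essentially the same route as the paper: the multiscale Poincar\'e inequality \eqref{est.Multiscale.Poincare} combined with $\phi_\e \approx 1$ on $\Omega'_\e$ for \eqref{est.uLp}, and for \eqref{est.u-L.Lp} the harmonic extension $\tilde u = \mathcal{E}_\e(u)$ with $L = \fint_\Omega \tilde u$, the gradient bound $\|\nabla\tilde u\|_{L^p(\Omega)} \le C\|\phi_\e\nabla u\|_{L^p(\Omega_\e)}$ (the paper's \eqref{est.extu}, which indeed needs no boundary condition), and the classical Poincar\'e--Wirtinger inequality. No gaps.
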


\begin{proof}
    The inequality \eqref{est.uLp} follows from \eqref{est.Multiscale.Poincare} and the fact that $\phi_\e \approx 1$ in $\Omega'_\e$. We  only need to prove \eqref{est.u-L.Lp}. Note that if $\tilde{u} = \mathcal{E}_\e(u)$ is the harmonic extension of $u$, then $\tilde{u} - L$ is the harmonic extension of $u-L$. We set
    \begin{equation}\label{eq.L=aveu}
        L = \fint_{\Omega} \tilde{u}. 
    \end{equation}
    We mimic the proof of Theorem \ref{thm.unweigthed}. In fact,
    \begin{equation*}
        \begin{aligned}
            \int_{\Omega_\e} |u-L|^p & \le C\e^p \int_{\Omega_\e} \phi_\e^p |\nabla u|^p + C\int_{\Omega'_\e} |u-L|^p \\
            & \le C\e^p \int_{\Omega_\e} \phi_\e^p |\nabla u|^p + C\int_{\Omega} |\tilde{u}-L|^p \\
            & \le C\e^p \int_{\Omega_\e} \phi_\e^p |\nabla u|^p + C\int_{\Omega} |\nabla \tilde{u}|^p \\
            & \le C\e^p \int_{\Omega_\e} \phi_\e^p |\nabla u|^p + C\int_{\Omega_\e} \phi_\e^p |\nabla u|^p \\
            & \le C\int_{\Omega_\e} \phi_\e^p |\nabla u|^p,
        \end{aligned}
    \end{equation*}
    where we have used the classical Poincar\'{e} inequality in the third inequality.
\end{proof}

\begin{theorem}[Weighted Sobolev-Poincar\'{e} inequality I] \label{thm.WSPI}
Let $1< p<\infty$ and $u\in W^{1,p}_{\phi_\e,0}(\Omega_\e)$.
\begin{itemize} 
    \item[(i)]
    If $1< p<d$ and $\frac{1}{p^*} =\frac{1}{p}-\frac{1}{d}$, there exists $C>0$ such that
        \begin{equation}\label{est.uq.Dup}
            \| \phi_\e u \|_{L^{p^*}(\Omega_\e)} \le C\| \phi_\e \nabla u \|_{L^p(\Omega_\e)}.
        \end{equation}
    \item[(ii)] If $p>d$, there exists $C>0$ such that
    \begin{equation*}
        \| \phi_\e u \|_{L^\infty(\Omega_\e)} \le C\| \phi_\e \nabla u \|_{L^p(\Omega_\e)}.
    \end{equation*}
\end{itemize}
\end{theorem}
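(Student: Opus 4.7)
The plan is to follow the multiscale template already used in Theorems \ref{thm.unweigthed} and \ref{thm.unweigthed2}: combine a rescaled version of the single-cell Sobolev inequality from Lemma \ref{lem.localSobolev} with a global estimate for the harmonic extension $\tilde u:=\mathcal{E}_\e(u)$, closing the loop via the classical (unweighted) Sobolev/Morrey inequality on $\Omega$.

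For part (i), I would apply Lemma \ref{lem.localSobolev}(i) to $v(y)=u(\e(y+z))$ and rescale back to the cell $\e(z+Y_*)$. Using the identity $\tfrac{1}{p^*}=\tfrac{1}{p}-\tfrac{1}{d}$ to track the powers of $\e$, a direct computation yields
\begin{equation*}
\|\phi_\e u\|_{L^{p^*}(\e(z+Y_*))} \le C\|\phi_\e \nabla u\|_{L^p(\e(z+Y_*))} + C\e^{-1}\|u\|_{L^p(\e(z+Y'_*))}.
\end{equation*}
The $\e^{-1}$ is not a genuine loss: since $\phi_\e\approx 1$ on $\e(z+Y'_*)$ and $|\e(z+Y'_*)|\approx \e^d$, Hölder's inequality converts $\e^{-1}\|u\|_{L^p(\e(z+Y'_*))}$ into $C\|\phi_\e u\|_{L^{p^*}(\e(z+Y'_*))}$. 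Raising to the $p^*$ power, summing over $z$ (including boundary cells), and using the embedding $\ell^p\hookrightarrow\ell^{p^*}$ on the gradient sum (valid since $p<p^*$) gives
\begin{equation*}
\|\phi_\e u\|_{L^{p^*}(\Omega_\e)} \le C\|\phi_\e \nabla u\|_{L^p(\Omega_\e)} + C\|\phi_\e u\|_{L^{p^*}(\Omega'_\e)}.
\end{equation*}
To close the estimate, I would bound the remainder using the harmonic extension: since $\phi_\e\approx 1$ on $\Omega'_\e$, $\|\phi_\e u\|_{L^{p^*}(\Omega'_\e)}\le C\|\tilde u\|_{L^{p^*}(\Omega)}$; because the extended holes do not touch $\partial\Omega$, $\tilde u\in W^{1,p}_0(\Omega)$, so the classical Sobolev inequality combined with Lemma \ref{lem.Gradient Ext} delivers $\|\tilde u\|_{L^{p^*}(\Omega)}\le C\|\nabla\tilde u\|_{L^p(\Omega)}\le C\|\phi_\e \nabla u\|_{L^p(\Omega_\e)}$.

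Part (ii) runs along the same lines with $L^\infty$ in place of $L^{p^*}$. Rescaling Lemma \ref{lem.localSobolev}(ii) produces
\begin{equation*}
\|\phi_\e u\|_{L^\infty(\e(z+Y_*))} \le C\e^{1-d/p}\|\phi_\e\nabla u\|_{L^p(\e(z+Y_*))} + C\e^{-d/p}\|u\|_{L^p(\e(z+Y'_*))},
\end{equation*}
where $\e^{1-d/p}\le 1$ because $p>d$ and $\e<1$, while Hölder on $\e(z+Y'_*)$ turns the last term into $C\|\phi_\e u\|_{L^\infty(\e(z+Y'_*))}$. Taking the supremum over $z$ reduces matters to estimating $\|\phi_\e u\|_{L^\infty(\Omega'_\e)}\le \|\tilde u\|_{L^\infty(\Omega)}$, and the Morrey embedding $W^{1,p}_0(\Omega)\hookrightarrow L^\infty(\Omega)$ together with $\|\nabla\tilde u\|_{L^p(\Omega)}\le C\|\phi_\e \nabla u\|_{L^p(\Omega_\e)}$ finishes the job.

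The main delicate point is the careful bookkeeping of the $\e$-exponents under the rescaling to a unit cell. The key numerical coincidence is the identity $\tfrac{d}{p}-\tfrac{d}{p^*}=1$, which ensures that the Hölder gain on a set of measure $\e^d$ (passing from $L^p$ to $L^{p^*}$) exactly cancels the rescaling loss of $\e^{-1}$; the analogous matching in part (ii) is $1-d/p\ge 0$ versus the $\e^{-d/p}$ appearing on the extended cell. This cancellation is precisely what makes the final constant independent of $\e$.
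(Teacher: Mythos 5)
Your proposal is correct and follows essentially the same route as the paper: a cell-by-cell rescaling of Lemma \ref{lem.localSobolev}, summation via the elementary inequality $\sum_i a_i^{\beta}\le(\sum_i a_i)^{\beta}$ with $\beta=p^*/p$, and closure through the harmonic extension $\mathcal{E}_\e$ combined with the classical Sobolev/Morrey inequality on $\Omega$ and Lemma \ref{lem.Gradient Ext}. The only differences are cosmetic (you track the $\e$-exponents at the level of norms and phrase the remainder as $\|\phi_\e u\|_{L^{p^*}(\Omega'_\e)}$ rather than $\|u\|_{L^{p^*}(\Omega'_\e)}$, which is equivalent since $\phi_\e\approx 1$ on $\Omega'_\e$), and your bookkeeping of the cancellation $d/p-d/p^*=1$ matches the paper's computation.
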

\begin{proof}
    (i) We first prove
    \begin{equation}\label{est.u.Du}
        \| u \|_{L^{p^*}(\Omega'_\e)} \le C\| \phi_\e \nabla u \|_{L^p(\Omega_\e)}.
    \end{equation}
    Let $\tilde{u} = \mathcal{E}_\e(u)$ be the harmonic extension of $u$ from $\Omega'_\e$ to the entire $\Omega$ constructed as before. Then we have \eqref{est.extu}. Since $u = \tilde{u}$ in $\Omega'_\e$,
    by the classical Sobolev-Poincar\'{e} ienquality in $\Omega$ and \eqref{est.extu}, we obtain
    \begin{equation}\label{est.uLq.DuLp}
        \| u\|_{L^{p^*}(\Omega'_\e)} = \| \tilde{u}\|_{L^{p^*}(\Omega'_\e)} \le C\| \nabla \tilde{u} \|_{L^p(\Omega)} \le C\| \phi_\e \nabla u \|_{L^p(\Omega_\e)}.
    \end{equation}

    Next, we show
    \begin{equation}\label{est.u.Du+u}
        \| \phi_\e u \|_{L^{p^*}(\Omega_\e)} \le C\| \phi_\e \nabla u \|_{L^p(\Omega_\e)} + C\| u\|_{L^{p^*}(\Omega'_\e)}.
    \end{equation}
    We will again use the technique of ``multiscale inequalities''. Precisely,
    \begin{equation}
    \begin{aligned}\label{est.phieu.Lq}
        \int_{\Omega_\e} |\phi_\e u|^{p^*} & = \sum_{z} \int_{\e(z+Y_*)\cap \Omega_\e} |\phi_\e u|^{p^*} \\
        &= \sum_z \e^d \int_{(z+Y_*)\cap \frac{1}{\e}\Omega} |\phi(x) u(\e x)|^{p^*} dx \\
        & \le \sum_z C \e^d \bigg( \int_{(z+Y_*)\cap \frac{1}{\e}\Omega} |\phi(x) \e \nabla u(\e x)|^p dx \bigg)^{{p^*}/p} \\
        &\qquad\qquad + \sum_z C\e^d \bigg( \int_{(z+Y'_*)\cap \frac{1}{\e}\Omega} | u(\e x)|^p dx \bigg)^{{p^*}/p} \\
        & \le \sum_{z} C\bigg( \int_{\e(z+Y_*)\cap \Omega_\e} |\phi_\e \nabla u|^p \bigg)^{{p^*}/p} + \sum_z C\int_{\e(z+Y'_*)\cap \Omega_\e} |u|^{p^*},
    \end{aligned}
    \end{equation}
    where we have used \eqref{est.local Sobolev} in the third line. Now we recall the following elementary
    inequality: for any nonnegative sequence $a_i \ge 0$ and $\beta \ge 1$,
    \begin{equation}\label{est.basic ineq}
        \sum_i a_i^\beta  \le \Big( \sum_i \alpha_i \Big)^\beta.
    \end{equation}
    Applying this inequality to the first term of the last line of \eqref{est.phieu.Lq}, we obtain
    \begin{equation*}
        \int_{\Omega_\e} |\phi_\e u|^{p^*} \le C\bigg( \int_{\Omega_\e} |\phi_\e \nabla u|^p \bigg)^{{p^*}/p} + C\int_{\Omega'_\e} |u|^{p^*}.
    \end{equation*}
    This proves \eqref{est.u.Du+u}. Clearly, the estimates \eqref{est.u.Du} and \eqref{est.u.Du+u} together yield \eqref{est.uq.Dup}.

    (ii) If $p>d$,  \eqref{est.uLq.DuLp} is replaced by
    \begin{equation*}
        \| u\|_{C^\alpha(\Omega'_\e)} \le C\| \phi_\e \nabla u \|_{L^p(\Omega_\e)},
    \end{equation*}
    where $\alpha = 1-\frac{d}{p}$.

    Now consider in each periodic cell $\e(z+Y_*)$ (the estimate for the boundary cells is the same), and apply \eqref{est.Ca.phiu} in $z+Y_*$ after scaling,
    \begin{equation*}
    \begin{aligned}
        \sup_{\e(z+Y_*)} |\phi_\e u| & = \sup_{x\in z+Y_*} |\phi(x) u(\e x)| \\
        & \le C\| \phi \nabla(u(\e x)) \|_{L^p(z+Y_*)} + C\| u(\e x) \|_{L^p(Y'_*)} \\
        & \le C\e^{1-d/p} \| \phi_\e \nabla u \|_{L^p(\e(z+Y_*))} + C\| u \|_{L^\infty(\e(z+Y'_*))} \\
        & \le C\e^{1-d/p} \| \phi_\e \nabla u \|_{L^p(\Omega_\e)} + C\|u \|_{C^\alpha(\Omega'_\e)} \\
        & \le C\| \phi_\e \nabla u \|_{L^p(\Omega_\e)}.
    \end{aligned} 
    \end{equation*}
    This ends the proof.
\end{proof}

\begin{theorem}[Weighted Sobolev-Poincar\'{e} inequality II]
    Let $1<  p<\infty$ and $u\in W^{1,p}_{\phi_\e}(\Omega_\e)$.
\begin{itemize} 
    \item[(i)]
    If $1< p<d$, there exists $C>0$ such that
        \begin{equation}\label{est.u-Lq.Dup}
            \inf_{L \in \R} \| \phi_\e (u - L) \|_{L^{p^*}(\Omega_\e)} \le C\| \phi_\e \nabla u \|_{L^p(\Omega_\e)},
        \end{equation}
    and
    \begin{equation}\label{est.pheu.Sobolev}
        \| \phi_\e u \|_{L^{p^*}(\Omega_\e)} \le C\| \phi_\e \nabla u \|_{L^p(\Omega_\e)} + C\| \phi_\e  u \|_{L^p(\Omega_\e)}.
    \end{equation}
    \item[(ii)] If $p>d$, there exists $C>0$ such that
    \begin{equation*}
        \inf_{L\in \R} \| \phi_\e (u-L) \|_{L^\infty(\Omega_\e)} \le C\| \phi_\e \nabla u \|_{L^p(\Omega_\e)},
    \end{equation*}
    and
    \begin{equation*}
        \| \phi_\e u \|_{L^\infty(\Omega_\e)} \le C\| \phi_\e \nabla u \|_{L^p(\Omega_\e)} +  C\| \phi_\e u \|_{L^p(\Omega_\e)}.
    \end{equation*}
\end{itemize}
\end{theorem}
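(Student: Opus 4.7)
The plan is to run the same template used for Theorem \ref{thm.WSPI}, with a correctly chosen constant $L$ subtracted to compensate for the absence of a boundary condition on $\partial\Omega$. First I would handle (i): let $\tilde u=\mathcal{E}_\e(u)$ be the harmonic extension from $\Omega_\e'$ to $\Omega$, and set $L=\fint_\Omega \tilde u$, exactly as in \eqref{eq.L=aveu}. Since $\tilde u-L$ is the harmonic extension of $u-L$, the classical Sobolev--Poincaré inequality on $\Omega$ together with Lemma \ref{lem.Gradient Ext} and \eqref{est.extu} gives
\begin{equation*}
\|u-L\|_{L^{p^*}(\Omega_\e')}=\|\tilde u-L\|_{L^{p^*}(\Omega_\e')}\le \|\tilde u-L\|_{L^{p^*}(\Omega)}\le C\|\nabla\tilde u\|_{L^p(\Omega)}\le C\|\phi_\e\nabla u\|_{L^p(\Omega_\e)}.
\end{equation*}
This plays the role of the estimate \eqref{est.u.Du} in Theorem \ref{thm.WSPI}.

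Next, I would apply the multiscale argument from \eqref{est.phieu.Lq} to $u-L$ in place of $u$; since $\nabla(u-L)=\nabla u$, the local estimate \eqref{est.local Sobolev} in each cell $z+Y_*$ yields after scaling and the basic inequality \eqref{est.basic ineq}
\begin{equation*}
\|\phi_\e(u-L)\|_{L^{p^*}(\Omega_\e)}\le C\|\phi_\e\nabla u\|_{L^p(\Omega_\e)}+C\|u-L\|_{L^{p^*}(\Omega_\e')}.
\end{equation*}
Combining with the previous display proves \eqref{est.u-Lq.Dup}.

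For the second inequality \eqref{est.pheu.Sobolev}, I would estimate $|L|$ using the triangle inequality. Since $\phi\approx \dist(\cdot,T)$ is bounded below by a positive constant on $Y_*'$, one has $\|\phi_\e\|_{L^p(\Omega_\e)}\ge c>0$ uniformly in $\e$. Thus
\begin{equation*}
|L|\cdot c \le |L|\,\|\phi_\e\|_{L^p(\Omega_\e)} \le \|\phi_\e u\|_{L^p(\Omega_\e)}+\|\phi_\e(u-L)\|_{L^p(\Omega_\e)}.
\end{equation*}
By Hölder on the bounded set $\Omega_\e$, $\|\phi_\e(u-L)\|_{L^p(\Omega_\e)}\le C\|\phi_\e(u-L)\|_{L^{p^*}(\Omega_\e)}$, which by \eqref{est.u-Lq.Dup} is at most $C\|\phi_\e\nabla u\|_{L^p(\Omega_\e)}$. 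Putting this together with the triangle inequality $\|\phi_\e u\|_{L^{p^*}(\Omega_\e)}\le \|\phi_\e(u-L)\|_{L^{p^*}(\Omega_\e)}+|L|\,\|\phi_\e\|_{L^{p^*}(\Omega_\e)}$ and the uniform bound $\|\phi_\e\|_{L^{p^*}(\Omega_\e)}\le C$ yields \eqref{est.pheu.Sobolev}.

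Part (ii) for $p>d$ follows by the same two steps, replacing $L^{p^*}$ by $L^\infty$ throughout: the classical embedding $W^{1,p}(\Omega)\hookrightarrow C^\alpha(\Omega)$ with $\alpha=1-d/p$ applied to $\tilde u-L$ provides the analogue of the first step, and the local cell estimate \eqref{est.Ca.phiu} replaces \eqref{est.local Sobolev} in the multiscale step (with the supremum of the local maxima replacing the $\ell^{p^*}$-sum, so the basic inequality \eqref{est.basic ineq} is not even needed). The main technical point to verify is that all constants in the multiscale bookkeeping are genuinely $\e$-independent; the uniform lower bound $\|\phi_\e\|_{L^p(\Omega_\e)}\ge c$ and the uniform upper bound $\|\phi_\e\|_{L^{p^*}(\Omega_\e)}\le C$, both of which follow from $\phi\approx \dist(\cdot,T)$ and the periodicity, are the only nonroutine inputs.
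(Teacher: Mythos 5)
Your proposal is correct. For the first inequality in each part it coincides with the paper's proof: the paper also takes $\tilde u=\mathcal{E}_\e(u)$, chooses $L=\fint_\Omega\tilde u$ as in \eqref{eq.L=aveu}, uses the classical Sobolev--Poincar\'e inequality to get $\|u-L\|_{L^{p^*}(\Omega_\e')}\le C\|\phi_\e\nabla u\|_{L^p(\Omega_\e)}$, and then reruns the cell-by-cell argument of Theorem \ref{thm.WSPI} with $u$ replaced by $u-L$ (using that \eqref{est.u.Du+u} is purely local and $\nabla(u-L)=\nabla u$). Where you genuinely diverge is in \eqref{est.pheu.Sobolev} and its $L^\infty$ analogue: the paper proves these directly, replacing \eqref{est.uLq.DuLp} by the non-homogeneous Sobolev bound $\|u\|_{L^{p^*}(\Omega_\e')}\le C\|\nabla\tilde u\|_{L^p(\Omega)}+\|\tilde u\|_{L^p(\Omega_\e')}\le C\|\phi_\e\nabla u\|_{L^p(\Omega_\e)}+C\|\phi_\e u\|_{L^p(\Omega_\e)}$ and running the multiscale step once more with $u$ itself, whereas you deduce them from \eqref{est.u-Lq.Dup} by the triangle inequality plus a bound on $|L|$, using the uniform lower bound $\|\phi_\e\|_{L^p(\Omega_\e)}\ge c>0$ and H\"older on the bounded set $\Omega_\e$. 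Both routes are sound and give $\e$-independent constants; yours avoids a second pass through the cell-by-cell bookkeeping at the cost of the (easy, but worth stating, and indeed true by $\phi\approx\dist(\cdot,T)$, periodicity, and counting full cells in $\Omega$) lower bound on $\|\phi_\e\|_{L^p(\Omega_\e)}$, while the paper's version stays entirely within the multiscale template and never needs to quantify $|L|$. Your treatment of (ii) by swapping $L^{p^*}$ for $L^\infty$, using \eqref{est.Ca.phiu} in place of \eqref{est.local Sobolev}, matches the paper's ``the proof for (ii) is similar'' and is fine.
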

\begin{proof}
    For \eqref{est.u-Lq.Dup}, the proof is almost identical to Theorem \ref{thm.WSPI} with replacing $u$ by $u-L$. The only difference is that when we use the classical Sobolev-Poincar\'{e} inequality in \eqref{est.uLq.DuLp}, we take $L$ to be the average of $\tilde{u}$ over $\Omega$ (as in \eqref{eq.L=aveu}) to 
    obtain 
    \begin{equation*}
        \| u-L \|_{L^{p^*}(\Omega'_\e)} \le \| \tilde{u} - L \|_{L^{p^*}(\Omega_\e)} \le C\| \nabla \tilde{u} \|_{L^p(\Omega)} \le C\| \phi_\e \nabla u \|_{L^p(\Omega_\e)}.
    \end{equation*}
    The rest of the proof is exactly the same.

    For \eqref{est.pheu.Sobolev}, we replace \eqref{est.uLq.DuLp} by
    \begin{equation*}
    \begin{aligned}
        \| u\|_{L^{p^*}(\Omega'_\e)}   = \| \tilde{u}\|_{L^{p^*}(\Omega'_\e)} 
        & \le C\| \nabla \tilde{u} \|_{L^p(\Omega)} + \| \tilde{u} \|_{L^p(\Omega'_\e)}\\
        &\le C\| \phi_\e \nabla u \|_{L^p(\Omega_\e)} + C\| \phi_\e u \|_{L^p(\Omega_\e)}.
    \end{aligned}
    \end{equation*}
    The rest of the proof is exactly the same. The proof for (ii) is similar.
\end{proof}

\begin{remark}[Compact embedding]
    Let $1< p<d$ and $\frac{1}{p^*} = \frac{1}{p} - \frac{1}{d}$. Then for any $1\le q<p^*$, the embedding $W^{1,p}_{\phi_\e}(\Omega_\e) \to L^q_{\phi_\e}(\Omega_\e)$ is compact. This general fact 
    is not used in this paper.
\end{remark}

\section{Regularity of degenerate equations at small scales}\label{sec.3}

Let $V_\e= W^{1, 2}_{\phi_\e, 0} (\Omega_\e)$.
For $f\in L^2(\Omega_\e)^d$ and $F\in L^2(\Omega_\e)$, we call $u_\e\in V_\e$ a weak solution to the boundary value 
problem, 
\begin{equation}\label{BVP1}
\cL_\e( u_\e)=\text{\rm div}(\phi_\e f) +F
\quad \text{ in } \Omega_\e \quad \text{ and } \quad u_\e =0 \quad 
\text{ on } \partial \Omega,
\end{equation}
if for any $v \in V_\e$,
\begin{equation*}
    \int_{\Omega_\e} \phi_\e^2 A_ \e \nabla u_\e \cdot \nabla v
    =-\int_{\Omega_\e} \phi_\e f \cdot \nabla v
    + \int_{\Omega_\e} F  v.
\end{equation*}
The existence and uniqueness of the weak solution $u_\e$ follow readily from the Lax-Milgram Theorem by using the inequality 
$\| v \|_{L^2(\Omega_\e)} \le C \|\phi_\e \nabla  v \|_{L^2(\Omega_\e)}$ for $v\in V_\e$; see Theorem \ref{thm.WSPI}.
Moreover, the solution satisfies the energy estimate,
\begin{equation*}
    \| \phi_\e \nabla u_\e \|_{L^2(\Omega_\e)} 
    \le C \left\{ 
    \| f \|_{L^2(\Omega_\e)} + \| F \|_{L^2(\Omega_\e)} \right\}.
\end{equation*}

In this section, we focus on the local regularity at small scales for the special equation $\cL_\e(u_\e) = \phi_\e F$ in $\Omega_\e$. By setting $u(x) = u_\e(\e x)$, this equation can be rescaled to
\begin{equation}\label{eq.rescaled.phiF}
    -\text{div}(\phi^2 A \nabla u) = \e^2 \phi(x) F(\e x) \quad \text{in } \e^{-1}\Omega_\e.
\end{equation}
In order to establish
the small-scale estimates of the equation \eqref{eq.rescaled.phiF} for all the points in $\e^{-1} \Omega_\e$, we need to consider three cases separately. 

\begin{itemize}
    \item \textbf{Interior case}: $x_0 \in \e^{-1}\Omega_\e$ and $B_{4r}(x_0) \subset \e^{-1}\Omega_\e$. In this case, for any $x\in B_{2r}(x_0)$, $\phi(x) \approx \phi(x_0)$. Thus, the equation \eqref{eq.rescaled.u} in $B_{2r}(x_0)$ can be reduced to a uniformly elliptic equation and the classical interior estimate applies.

    \item \textbf{Boundary case I}: $x_0 \in \e^{-1} \partial \Omega$ and $B_{4r}(x_0) \cap \e^{-1}\Omega \subset \e^{-1} \Omega_{\e}$. In this case $B_{4r}(x_0)$ does not intersect with the holes in $\Omega_\e$. Hence, for $x\in B_{2r}(x_0) \cap \e^{-1} \Omega$, $\phi(x) \approx \phi(x_0)$. Then the classical boundary estimate of elliptic equations applies.

    \item \textbf{Boundary case II}: $x_0 \in \e^{-1}\partial \Omega_\e \setminus \e^{-1}\partial \Omega$, i.e., $x_0$ is on the boundary of holes. For sufficiently small and
    fixed $r_0<1$ and $r<r_0$, we have $B_{2r}(x_0)$ only intersects with the boundary of one hole and does not get close to the other holes. 
    Note that in this case, the coefficient $\phi(x)$ degenerates as a distance function from the hole and the classical elliptic theory does not apply.
\end{itemize}

As the interior case and the boundary case I are classical, we only need to focus on the regularity estimates of the boundary case II. The corresponding estimates in each cell $Y^*$ (including the boundary cells) may be derived by a covering argument.
    
Let $|z|_\infty := \max_{1\le j\le d} |z_j|$ for $z = (z_1,z_2,\dots, z_d) \in \Z^d$. Let $Y_*^+ = \cup_{|z|_\infty\le 1} (z+Y_*)$ be the enlarged 
cell of $Y_*$. In view of \eqref{eq.rescaled.phiF}, we
consider the rescaled equation in an enlarged cell
\begin{equation}\label{eq.rescaled.u}
    -\text{\rm div}  (\phi^2 A \nabla u) = \phi F \quad \text{in } Y_*^+.
\end{equation}
Let $u\in W^{1, 1}_{\rm loc}(Y_*^+)$. We call $u$ a weak solution of \eqref{eq.rescaled.u} if 
$\phi u \in L^2(Y_*^+)$, $\phi |\nabla u | \in L^2(Y_*^+)$, and for any $v\in C_0^\infty (Y^+)$, we have
$$
\int_{Y_*^+} \phi^2 A \nabla u \cdot \nabla v =\int_{Y_*^+} \phi F v,
$$
where $Y^+=\cup_{|z|_\infty\le 1} (z+Y)$.

{We first prove the local boundedness of weak solutions  by the Moser's iteration.}

\begin{lemma}\label{lem.Moser}
    Let $u$ be a weak solution of \eqref{eq.rescaled.u} and $F\in L^p(Y_*)$ for some $p>d$. 
        If $B$ is centered on $\partial T$ and $r=\text{\rm diam}(B) \le c_0/4$, then
        \begin{equation}\label{local-moser}
            \sup_{\frac12 B \cap Y_*} |u| \le Cr^{-1} \bigg( \fint_{B\cap Y_*} |\phi u|^2 \bigg)^{1/2} + Cr \bigg( \fint_{B\cap Y_*} |F|^p \bigg)^{1/p}.
        \end{equation}
\end{lemma}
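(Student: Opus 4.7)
The plan is to prove the lemma by Moser's iteration in the weighted measure $d\mu = \phi^2\,dx$. Because $\mathrm{diam}(B) \le c_0/4$ and $B$ is centered on $\partial T$, the ball meets only one component of $\partial T$ and, as in the argument preceding Theorem \ref{thm.unweigthed}, is disjoint from $Y_*'$; after a local change of coordinates that flattens $\partial T$ near the center of $B$, we may suppose $\phi(y) \approx y_d$ throughout $B \cap Y_*$.

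The iteration rests on two ingredients. First, testing the equation \eqref{eq.rescaled.u} against $u|u|^{2q-2}\eta^2$ for $q \ge 1$ and a smooth cutoff $\eta \in C_c^\infty(B)$, and using the ellipticity of $A$ together with Young's inequality, we obtain the Caccioppoli-type estimate
\[
\int \phi^2 |\nabla(|u|^q)|^2 \eta^2 \le C \int \phi^2 |u|^{2q} |\nabla \eta|^2 + C q \int \phi |F|\, |u|^{2q-1} \eta^2.
\]
Second, we shall use the weighted Sobolev inequality
\[
\Bigl(\int_{B\cap Y_*} \phi^2 |v|^{2\chi}\,dx\Bigr)^{1/\chi} \le C \int_{B\cap Y_*} \phi^2 |\nabla v|^2\,dx, \qquad \chi = \tfrac{d+2}{d},
\]
for every $v$ compactly supported in $B\cap Y_*$. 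The exponent $\chi > 1$ reflects the effective dimension $d+2$ of the weight $\phi^2 \approx y_d^2$. Since $\phi^2$ is not a Muckenhoupt $A_2$ weight, the Fabes--Kenig--Serapioni theory does not apply; instead we realize the weighted norms as unweighted norms in $\R^{d+2}$ by setting $\tilde v(x', z) := v(x', |z|)$ for $(x', z) \in \R^{d-1} \times \R^3$. The identities
\[ \int_{\R^{d+2}} |\tilde v|^p\,dy = |S^2| \int |v|^p y_d^2\,dx, \qquad \|\nabla \tilde v\|_{L^2(\R^{d+2})}^2 = |S^2| \int |\nabla v|^2 y_d^2\,dx \]
reduce the claim to the classical Sobolev inequality in $\R^{d+2}$, whose critical exponent is exactly $2^*_{d+2} = 2\chi$.

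Combining the two ingredients for $v = |u|^q\eta$ on nested balls $B_\rho \subset B_R \subset B$, and controlling the forcing term by H\"older's inequality (with $F \in L^p$, $p > d$) together with Young's absorption, we obtain the reverse H\"older inequality
\[
\|u\|_{L^{2q\chi}(d\mu,\, B_\rho)} \le \frac{(Cq^2)^{1/(2q)}}{(R-\rho)^{1/q}} \|u\|_{L^{2q}(d\mu,\, B_R)} + (\mathrm{forcing})^{1/(2q)}.
\]
Iterating with $q_k = \chi^k$ and balls shrinking from $B$ down to $\tfrac12 B$, the iteration constants accumulate to $r^{-\chi/(\chi-1)} = r^{-(d+2)/2}$. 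Since $\{\phi = 0\}$ has Lebesgue measure zero, $L^\infty(d\mu) = L^\infty(dx)$, so the iteration yields
\[
\|u\|_{L^\infty(\tfrac12 B \cap Y_*)} \le Cr^{-(d+2)/2} \|\phi u\|_{L^2(B \cap Y_*)} + C r^{1-d/p} \|F\|_{L^p(B \cap Y_*)},
\]
which is the stated estimate after rewriting in averaged form via $|B| \sim r^d$. The main obstacle is the weighted Sobolev inequality above; the symmetrization to $\R^{d+2}$ is essential because $\phi^2$ is not $A_2$, and it alone delivers the exponent $\chi = (d+2)/d$ whose associated accumulation constant matches the target scaling $r^{-(d+2)/2}$.
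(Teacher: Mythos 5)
Your overall strategy coincides with the paper's: a Moser iteration in the measure $d\mu=\phi^2\,dx$ whose gain per step is $\chi=(d+2)/d$, the effective dimension being $d+2$. Your route to the key Sobolev inequality is genuinely different and is fine: the paper gets $\bigl(\int\phi^2|w|^{2\gamma_0}\bigr)^{1/2\gamma_0}\le C\|\phi\nabla w\|_{L^2}$ with $\gamma_0=(d+2)/d$ by interpolating the Hardy-type bound of Lemma \ref{lem.local Poincare} with the weighted Sobolev inequality of Lemma \ref{lem.localSobolev} (see \eqref{est.phiDw.interp}), whereas you obtain the same exponent by the symmetrization $\tilde v(x',z)=v(x',|z|)$, $z\in\R^3$, and the classical Sobolev inequality in $\R^{d+2}$; this is correct (no condition is needed at $z=0$, a set of codimension $3$), and your bookkeeping of powers of $r$ matches \eqref{local-moser}.

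The gap is in the source term, which is precisely the delicate part of the paper's proof. After testing with $u|u|^{2q-2}\eta^2$ the inhomogeneity enters as $q\int\phi|F||u|^{2q-1}\eta^2$, i.e.\ with a \emph{single} power of $\phi$ against the measure $\phi^2\,dx$. Writing it as $q\int(\phi^{-1}|F|)\,|u|^{2q-1}\eta^2\,d\mu$, a two-factor H\"older/Young absorption through your $\chi$-Sobolev inequality would require, uniformly as $q\to\infty$, that $\phi^{-1}F\in L^{s}(d\mu)$ for some $s\ge \chi'=(d+2)/2$; but $\|\phi^{-1}F\|_{L^s(d\mu)}^s=\int\phi^{2-s}|F|^s\,dx$ is finite in general only for $s<3$ (even for bounded $F$, since $\phi^{2-s}$ is not integrable across $\partial T$ when $s\ge 3$), and under the stated hypothesis $F\in L^p$, $p>d$, only for $s$ close to $2$. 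So for $d\ge 4$ the absorption is outright impossible, and for $d=2,3$ it is not available either; equivalently, for large $q$ the exponent $2(2q-1)/q$ needed on $|u|^q\eta$ exceeds $2\chi$, so the $L^2(d\mu)$ and $L^{2\chi}(d\mu)$ norms you control cannot dominate the forcing term. ``H\"older plus Young'' with only the stated ingredients therefore does not close the iteration.

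The repair is an extra ingredient: the paper uses the three-factor H\"older \eqref{est.Fum-1}, splitting $|u|^{m-1}$ as $|u|^{(m-2)/2}\cdot|u|^{m/2}$ and placing the first factor in \emph{unweighted} $L^2(dx)$, which is then controlled by the Hardy-type inequality of Lemma \ref{lem.local Poincare}, $\|w_\ell\|_{L^2}\le C\|\phi\nabla w_\ell\|_{L^2}$ (with careful tracking of the $m$-dependence of the constants). In your symmetrized picture the same fix is the codimension-$3$ Hardy inequality $\int_{\R^{d+2}}|z|^{-2}|\tilde w|^2\le C\int_{\R^{d+2}}|\nabla\tilde w|^2$, combined with the same splitting; you should add this, and also the truncations $u_\ell^+=\min\{|u|,\ell\}$ (as in the paper) to justify the test functions $u|u|^{2q-2}\eta^2$ a priori.
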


\begin{proof}
    By rescaling, it suffices to consider the case
    $\text{diam}(B) = 1$. Let $B_r = rB$ and $\widetilde{B}_r = B_r \cap Y_*$. Let $1/2<r<R<1$ and $\eta \in C_0^\infty (B_R)$ be such that $0\le \eta \le 1$ in $B_R$, $\eta = 1$ in $B_r$ and $|\nabla \eta| \le C(R-r)^{-1}$. Let $m\ge 2$, $\ell\ge 1$,  and 
    $$
    u_\ell^+ = \min\{|u|, \ell\}, \qquad v_\ell =(u_\ell^+)^{m-2} u.
    $$
    Note that 
    $$
    \nabla v_\ell = (m-1) (u_\ell^+)^{m-2}   \nabla u \chi_{\{ |u|< \ell\} }
    + (u_\ell^+)^{m-2}  \nabla u \chi_{\{ |u| \ge \ell\} }.
    $$
    By a limiting argument, it is not hard to show that we may 
    test  the equation \eqref{eq.rescaled.u} against $v_\ell  \eta^2$.
    It follows that 
    \begin{equation}\label{est.um}
    \aligned
         & (m-1)\int_{\widetilde{B}_R} \phi^2( A\nabla u \cdot \nabla u) \chi_{\{ |u|<\ell\} }   (u_\ell^+)^{m-2} \eta^2 
         + \int_{\widetilde{B}_R} \phi^2 (A\nabla u \cdot \nabla u) \chi_{\{ |u|\ge \ell\} } (u_\ell^+)^{m-2} \eta^2\\
         &= \int_{\widetilde{B}_R} \phi F v_\ell  \eta^2 
        - 2\int_{\widetilde{B}_R} \phi^2 (A \nabla u\cdot \nabla \eta) v_\ell  \eta.
        \endaligned
    \end{equation}
    Let 
    $$
    w_\ell =(u_\ell^+)^{\frac{m}{2} - 1}  u\eta .
    $$
    Then
    \begin{equation} \label{eq.Dwl}
    \nabla w_\ell=\frac12 m  (u_\ell^+)^{\frac{m}{2}-1}  (\nabla u)\eta  \chi_{\{ |u|< \ell\} }
    + (u_\ell^+)^{\frac{m}{2}-1} (\nabla u)\eta \chi_{\{ |u| \ge \ell\} } 
+ (u_\ell^+)^{\frac{m}{2} - 1} u\nabla \eta.
    \end{equation}
    This, together with \eqref{est.um}, gives
    \begin{equation}\label{est.um-1}
    \aligned
         \int_{\widetilde{B}_R} \phi^2 |\nabla w_\ell  |^2 
         & \le C m  \int_{\widetilde{B}_R}  \phi | F| |v_\ell| \eta^2 
         + Cm   \int_{\widetilde{B}_R} \phi^2 |\nabla u| |\nabla \eta| |v_\ell| \eta\\
         &\qquad + C\int_{\widetilde{B}_R} \phi^2 (u_\ell^+)^{m-2} u^2 |\nabla \eta|^2.
        \endaligned
  \end{equation}
    Using H\"{o}lder's inequality and Young's inequality, we deduce that 
    \begin{equation}\label{est.Fum-1}
    \begin{aligned}
        &  m \int_{\widetilde{B}_R} \phi |F| |v_\ell| \eta^2  
        \le \| \delta F \|_{L^p(\widetilde{B}_R)} \| (\delta m)^{-1} |u_\ell^+|^{\frac{m-2}{2}}  \eta \|_{L^2(\widetilde{B}_R)} \| m^2 \phi w_\ell  \|_{L^{\gamma}(\widetilde{B}_R)} \\
        & \le C \delta^m \| F \|_{L^p(\widetilde{B}_R)}^{m} + \big( \frac{1}{m\delta} \big)^{2m/(m-2)} \| |u_\ell^+|^{\frac{m-2}{2}} \eta \|_{L^2(\widetilde{B}_R)}^{2m/(m-2)} + m^4 \| \phi w_\ell  \|_{L^{\gamma}(\widetilde{B}_R)}^2,
    \end{aligned}
    \end{equation}
    where $\frac{1}{p} + \frac12 + \frac{1}{\gamma} = 1$ and $\delta = \delta(m) > 0$ is a suitable number to be determined. Since $p>d$, we have  $2<\gamma<2^*$ (an obvious modification is needed for $d=2$ through the proof). 
    By  H\"{o}lder's inequality and the boundedness of $\eta$,
    \begin{equation}\label{est.Fum-2}
        \| |u_\ell^+|^{\frac{m-2}{2}} \eta \|_{L^2(\widetilde{B}_R)}^{2m/(m-2)} \le \| |u_\ell^+|^{\frac{m}{2}} \eta \|_{L^2(\widetilde{B}_R)}^{2} \le \| w_\ell \|_{L^2(\widetilde{B}_R)}^{2}.
    \end{equation}
    By an interpolation inequality,
    \begin{equation}\label{est.Fum-3}
        \| \phi w_\ell \|_{L^{\gamma}(\widetilde{B}_R)}^2 \le \big( \frac{1}{m^5 \delta} \big) \| \phi w_\ell \|_{L^{2^*}(\widetilde{B}_R)}^2 + C (m^5 \delta)^\alpha \| \phi w_\ell \|_{L^{2}(\widetilde{B}_R)}^2,
    \end{equation}
    where $\alpha>0$ is a constant depending only on $p$ and $d$.
    Substituting \eqref{est.Fum-2} and \eqref{est.Fum-3} into \eqref{est.Fum-1}, we have
    \begin{equation}\label{est.Fum-4}
    \aligned
         m\int_{\widetilde{B}_R} \phi |F| |v_\ell| \eta^2 
        &\le C \delta^m \| F \|_{L^p(\widetilde{B}_R)}^{m} + C m^4 (m^5 \delta)^\alpha \| \phi w_\ell \|_{L^{2}(\widetilde{B}_R)}^2 \\
        &\qquad + \big( \frac{1}{m\delta} \big)^{2m/(m-2)}  \| w_\ell \|_{L^2(\widetilde{B}_R)}^{2} + \frac{1}{m\delta} \| \phi w_\ell \|_{L^{2^*}(\widetilde{B}_R)}^2 .
        \endaligned
    \end{equation}

    On the other hand, using $\eqref{eq.Dwl}$, we have
    $$
    (u_\ell^+)^{\frac{m}{2}-1} |\nabla u | \eta 
    \le |\nabla w_\ell - (u_\ell^+)^{\frac{m}{2} -1} u \nabla \eta  |.
    $$
    This allows us to estimate the second integral on the right-hand side of \eqref{est.um-1} as follows,
    \begin{equation}\label{est.um-3}
        m   \int_{\widetilde{B}_R} \phi^2 |\nabla u| |\nabla \eta| |v_\ell| \eta \le  \frac{1}{m\delta} \| \phi \nabla w_\ell \|^2_{L^{2}(\widetilde{B}_R)} + C(m^3\delta+m)  \int_{\widetilde{B}_R} \phi^2 (u_\ell^+)^{m-2} u^2 |\nabla \eta|^2.
    \end{equation}

    Now, inserting \eqref{est.Fum-4} and \eqref{est.um-3} into \eqref{est.um-1}, we obtain
    \begin{equation}\label{est.um-4}
    \begin{aligned}
        & \int_{\widetilde{B}_R} \phi^2 |\nabla w_\ell |^2 \\ 
        & \le C \delta^m \| F \|_{L^p(\widetilde{B}_R)}^{m}+ C(m^3\delta+ m^4(m^5 \delta)^\alpha+m)  \int_{\widetilde{B}_R} \phi^2 (u_\ell^+)^{m-2} u^2 (|\nabla \eta|^2 + \eta^2) \\
        & \qquad + C\big( \frac{1}{m\delta} \big)^{2m/(m-2)}  \| w_\ell \|_{L^2(\widetilde{B}_R)}^{2} + \frac{C}{m\delta} \| \phi w_\ell \|_{L^{2^*}(\widetilde{B}_R)}^2 + \frac{C}{m\delta} \| \phi \nabla w_\ell \|^2_{L^{2}(\widetilde{B}_R)}
    \end{aligned}
    \end{equation}
    Note $w_\ell = 0$ on $\partial B_R \cap Y_*$. Recall two inequalities given by Lemma \ref{lem.localSobolev} (i) and Lemma \ref{lem.local Poincare},
    \begin{equation}
        \| \phi w_\ell \|_{L^{2^*}(\widetilde{B}_R)} \le C \| \phi \nabla w_\ell \|_{L^2(\widetilde{B}_R)},
    \end{equation}
    and
    \begin{equation}
        \| w_\ell \|_{L^{2}(\widetilde{B}_R)} \le C \| \phi \nabla w_\ell \|_{L^2(\widetilde{B}_R)}.
    \end{equation}
    Choosing $\delta = C_1 /m$ with sufficiently large $C_1 >0$, we obtain from \eqref{est.um-4} that
    \begin{equation}\label{est.um-5}
        \int_{\widetilde{B}_R} \phi^2 |\nabla w_\ell |^2 \le \frac{C^m}{m^m} \| F \|_{L^p(\widetilde{B}_R)}^{m} + Cm^{4+4\alpha} \int_{\widetilde{B}_R} \phi^2 (u_\ell^+)^{m-2} u^2 (|\nabla \eta|^2 + \eta^2).
    \end{equation}
    By  H\"{o}lder's inequality, we have
    \begin{equation}\label{est.phiDw.interp}
        \bigg( \int_{\widetilde{B}_R} \phi^2 |w_\ell|^{2\gamma_0} \bigg)^{1/2\gamma_0} \le \| \phi w_\ell \|_{L^{2^*}(\widetilde{B}_R)}^\theta \| w_\ell \|_{L^{2}(\widetilde{B}_R)}^{1-\theta} \le C \| \phi \nabla w_\ell \|_{L^2(\widetilde{B}_R)}, 
    \end{equation}
    where $\theta=d/(d+2)$ and $\gamma_0 = 1+2/d>1$.
    Hence, it follows from \eqref{est.um-5} and \eqref{est.phiDw.interp} that 
    \begin{equation}\label{est.MoserIter}
    \begin{aligned}
        & \bigg( \int_{\widetilde{B}_r} \phi^2 (u_\ell^+)^{(m-2)\gamma_0} |u|^{2\gamma_0} \bigg)^{1/\gamma_0 m} \\
        & \le \frac{C}{m} \| F \|_{L^p(\widetilde{B}_R)} + \bigg\{ \frac{ C m^{4+4\alpha} }{(R-r)^2} \bigg\}^{1/m}  \bigg( \int_{\widetilde{B}_R} \phi^2 (u_\ell^+)^{m-2} u^2 \bigg)^{1/m}.
    \end{aligned}
    \end{equation}
    The above estimate holds for any $\ell>1, m\ge 2$ and $1/2<r<R<1$. Moreover, the constant $C$ is independent of $\ell, m, R$ and $r$. By an iteration starting from $m=2$ and letting $\ell \to \infty$, we can see that $\int_{\widetilde{B}_r} \phi^2 |u|^{m\gamma_0}$ is finite for any $r<1$ and $m\ge 2$. Hence, sending $\ell \to \infty$ in \eqref{est.MoserIter}, we get
    \begin{equation}\label{est.MoserIter-2}
    \begin{aligned}
        \bigg( \int_{\widetilde{B}_r} \phi^2 |u|^{\gamma_0 m} \bigg)^{1/\gamma_0 m}  \le \frac{C}{m} \| F \|_{L^p(\widetilde{B}_R)} + \bigg\{ \frac{ Cm^{4+4\alpha}  }{(R-r)^2} \bigg\}^{1/m}  \bigg( \int_{\widetilde{B}_R} \phi^2 |u|^{m} \bigg)^{1/m}.
    \end{aligned}
    \end{equation}

    Now, we  iterate the inequality \eqref{est.MoserIter-2} by choosing an infinite sequence of shrinking radii $r_k$ with $1/2< r_0 \le 1$ and $\lim_{k\to \infty} r_k = 1/2$. Let $\theta \in (0,1)$ (particularly, we may pick $\theta = 1/2$ in the following calculation). Define
    \begin{equation*}
        r_k = \frac12 + \frac12 (1-\theta) \theta^k.
    \end{equation*}
    and
    \begin{equation*}
        p_0 = 2, \quad p_{k+1} = \gamma_0 p_k=2 \gamma_0^{k+1}.
    \end{equation*}
    Applying \eqref{est.MoserIter-2} with $m = p_k$ and $R = r_{k}, r= r_{k+1}$, we obtain
    \begin{equation*}
        \bigg( \int_{\widetilde{B}_{r_{k+1}}} \phi^2 |u|^{p_{k+1}} \bigg)^{1/p_{k+1}} \le \frac{C}{p_k} \| F \|_{L^p(\widetilde{B}_{r_k})} + \bigg\{ \frac{Cp_k^{2+ 2\alpha} )}{(1-\theta)^2 \theta^{k} } \bigg\}^{2/p_k} \bigg( \int_{\widetilde{B}_{r_k}} \phi^2 |u|^{p_k} \bigg)^{1/p_k} .
    \end{equation*}
    By iteration, we get
    \begin{equation*}
        \bigg( \int_{\widetilde{B}_{r_{k+1}}} \phi^2 |u|^{p_{k+1}} \bigg)^{1/p_{k+1}} \le \bigg\{ \sum_{j=0}^k  \frac{C D_k }{2\gamma_0^{k-j}}  \bigg\} \| F \|_{L^p(\widetilde{B}_{r_0})}  + D_k \bigg( \int_{\widetilde{B}_{r_0}} \phi^2 |u|^{2} \bigg)^{1/2},
    \end{equation*}
    where 
    \begin{equation*}
        D_k = \prod_{j=0}^k \bigg\{ \frac{C \gamma_0^{(2+2\alpha)j } }{(1-\theta)^2 \theta^j } \bigg\}^{1/\gamma_0^j}.
    \end{equation*}
    Due to the fact $\gamma_0>1$, we have $\sum_j (j /\gamma_0^j )< \infty$.
     It follows that $D_k \le C_\theta$ independent of $k$. Hence, 
    \begin{equation*}
        \sum_{j=0}^k  \frac{C D_k }{2\gamma_0^{k-j}} \le C_\theta.
    \end{equation*}
    Consequently,
    \begin{equation*}
        \bigg( \int_{\widetilde{B}_{r_{k+1}}} \phi^2 |u|^{p_{k+1}} \bigg)^{1/p_{k+1}} \le C_\theta \big( \| F \|_{L^p(\widetilde{B}_{r_0})} + \| \phi u \|_{L^2(\widetilde{B}_{r_0})} \big).
    \end{equation*}
    Finally, taking $k\to \infty$ and noting that $r_k> 1/2$, we derive
    \begin{equation*}
        \| u \|_{L^\infty(\widetilde{B}_{1/2})} \le C_\theta \big( \| F \|_{L^p(\widetilde{B}_{r_0})} + \| \phi u \|_{L^2(\widetilde{B}_{r_0})} \big).
    \end{equation*}
    This proves the desired estimate.
\end{proof}

\begin{lemma}\label{lem.SS.Lip}
    Let $u$ be a weak solution of \eqref{eq.rescaled.u} and $F\in L^p(Y_*)$ for some $p>d$. 
        If $B$ is centered on $\partial T$ and $\text{\rm diam}(B) \le c_0/4$, then
        \begin{equation}
            \sup_{\frac12 B \cap Y_*} |\phi \nabla u| \le C\bigg( \fint_{B\cap Y_*} |\phi \nabla u|^2 \bigg)^{1/2} + Cr \bigg( \fint_{B\cap Y_*} |F|^p \bigg)^{1/p},
        \end{equation}
        where $C$ depends on $p, \phi, A$ and $T$. 
\end{lemma}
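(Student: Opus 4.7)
The strategy is to reduce the degenerate equation to a uniformly elliptic one via the substitution $w := \phi u$, and then apply the classical boundary Schauder theory on the smooth piece $\partial T$. Writing $\phi^{2}A\nabla u = \phi A\nabla w - w A\nabla \phi$ and taking the divergence, the symmetry of $A$ makes the two cross terms cancel and one obtains
\begin{equation*}
    -\mathrm{div}(A \nabla w) \;=\; F - u\,\mathrm{div}(A \nabla \phi) \;=:\; G \qquad \text{in } B \cap Y_{*},
\end{equation*}
subject to the Dirichlet condition $w \equiv 0$ on $\partial T \cap B$, inherited from $\phi\equiv 0$ on $\partial T$.

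By Lemma \ref{lem.Moser} applied on $\tfrac{3}{4}B$, the solution $u$ is bounded in $L^\infty$ by $\|\phi u\|_{L^{2}(B\cap Y_{*})}$ and $\|F\|_{L^{p}(B\cap Y_{*})}$, and assumption \eqref{cond.phi-2} then yields $G\in L^{q}(\tfrac{3}{4}B\cap Y_{*})$ with $q:=\min(p,p_{0})>d$. Since $A$ is H\"older continuous and $\partial T$ is smooth, the classical boundary $W^{2,q}$ (hence $C^{1,\beta}$) estimate for $w$ gives
\begin{equation*}
    \sup_{\frac12 B \cap Y_{*}} |\nabla w| \;\le\; C\Big(\fint_{B\cap Y_{*}}|\nabla w|^{2}\Big)^{1/2} + Cr\Big(\fint_{B\cap Y_{*}}|G|^{q}\Big)^{1/q}.
\end{equation*}
The identity $\phi\nabla u = \nabla w - u\nabla \phi$, together with $|\nabla\phi|\le C$ and the Moser bound on $u$, turns this into an $L^\infty$ bound on $\phi\nabla u$, modulo the $L^{2}$ norms of $\nabla w$ and of $\phi u$ on the right-hand side.

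To conclude, those $L^{2}$ norms must be controlled by $\|\phi\nabla u\|_{L^{2}(B\cap Y_{*})}$. From $\nabla w = \phi\nabla u + u\nabla\phi$ it suffices to bound $\|u\|_{L^{2}(B\cap Y_{*})}$ by $\|\phi\nabla u\|_{L^{2}(B\cap Y_{*})}$. The crucial geometric observation is that for $r\le c_{0}/4$ one has $B\subset T'\setminus T$, so the correction term in Lemma \ref{lem.local Poincare} vanishes for cut-offs supported in $B$; that lemma, together with a Hardy-type bound on $\|w/\phi\|_{L^{2}}$ (using $w\equiv 0$ on $\partial T$ and $\phi\approx \mathrm{dist}(\cdot,\partial T)$), then delivers the required control after a standard slot iteration on nested concentric balls. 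I expect the main technical obstacle to be precisely this absorption step: cut-offs have to be introduced to produce zero data on $\partial B\cap Y_{*}$ before Lemma \ref{lem.local Poincare} and the Hardy inequality apply cleanly, and the constants must be tracked carefully through the iteration on nested balls in order to close the circle.
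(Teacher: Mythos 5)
Your reduction is the same as the paper's: set $w=\phi u$, use the symmetry of $A$ to get the nondegenerate equation $-\mathrm{div}(A\nabla w)=F-u\,\mathrm{div}(A\nabla\phi)$ with $w=0$ on $\partial T$, bound $\|u\|_{L^\infty}$ by Lemma \ref{lem.Moser}, invoke classical boundary regularity for $w$, and recombine via $\phi\nabla u=\nabla w-u\nabla\phi$. Up to that point the argument is sound and matches the paper.

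The gap is in your final absorption step. You claim that $\|u\|_{L^{2}(B\cap Y_{*})}$ can be controlled by $\|\phi\nabla u\|_{L^{2}(B\cap Y_{*})}$ via Lemma \ref{lem.local Poincare} with vanishing correction term plus a Hardy bound on $\|w/\phi\|_{L^{2}}$. No such inequality can hold: $u$ satisfies no boundary condition on $\partial T$ (only $\phi u$ vanishes there), and $u\equiv 1$ is a weak solution of \eqref{eq.rescaled.u} with $F=0$ for which the left-hand side is positive while $\phi\nabla u\equiv 0$. Concretely, your two mechanisms both fail to close: cutting off to kill the correction term in Lemma \ref{lem.local Poincare} produces the commutator term $\|\phi u\nabla\eta\|_{L^{2}}$, which reintroduces $\|\phi u\|_{L^{2}}$ on an annulus, and the Hardy step $\|u\|_{L^{2}}\lesssim\|\nabla w\|_{L^{2}}\le\|\phi\nabla u\|_{L^{2}}+\|u\nabla\phi\|_{L^{2}}$ reintroduces $\|u\|_{L^{2}}$ with a constant of order one that cannot be absorbed, no matter how the nested-ball iteration is arranged. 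The missing idea, which is how the paper closes the argument, is the invariance $u\mapsto u-L$: since $u-L$ solves the same equation, you may run the whole estimate for $u-L$ and then use the weighted Poincar\'e inequality
\begin{equation*}
    \inf_{L\in\R}\bigg(\fint_{2B\cap Y_{*}}|\phi(u-L)|^{2}\bigg)^{1/2}\le Cr\bigg(\fint_{3B\cap Y_{*}}|\phi\nabla u|^{2}\bigg)^{1/2}
\end{equation*}
(the local analogue of \eqref{est.u-L.Lp}, proved by the same harmonic-extension argument), followed by a simple covering argument to replace $3B\cap Y_{*}$ by $B\cap Y_{*}$. With that substitution for your last paragraph, the proof is complete.
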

\begin{proof}
Let $r = \text{diam}(B)$. 
Let ${v} = \phi u$. 
Using the assumption that $A$ is symmetric, a computation shows that
\begin{equation*}
    \text{\rm div} (A\nabla v)
    =\text{\rm div} (A\nabla \phi) u
    + \phi^{-1} \text{div} (\phi^2 A \nabla u).
    \end{equation*}
As a result,  ${v}$ satisfies
\begin{equation}\label{eq.tv.Laplace}
    -\text{\rm div} (A\nabla v) = V u + F \quad \text{ in } Y_*,
\end{equation}
where 
$
V =- \text{\rm div} (A\nabla \phi).
$
Note that $v = 0$ on $\partial T$ in the sense of trace. 
Under the assumptions that $A$ is H\"older continuous and $V\in L^{q}(Y_*)$ for some $q>d$, it follows from \eqref{eq.tv.Laplace} and
 the standard elliptic regularity theory
\begin{equation}\label{est.tv.Linfinity}
\begin{aligned}
    \sup_{ \frac12 B \cap Y_*}|\nabla {v}| & \le Cr \bigg( \fint_{B\cap Y_*} |V u|^q \bigg)^{1/q} + Cr \bigg( \fint_{B\cap Y_*} |F|^p \bigg)^{1/p} \\
    & \le  C r \bigg( \fint_{B\cap Y_*} |V|^q \bigg)^{1/q} \| u\|_{L^\infty(B\cap Y_*)} + Cr \bigg( \fint_{B\cap Y_*} |F|^p \bigg)^{1/p} \\
    & \le C \bigg( \fint_{B\cap Y_*} |V|^q \bigg)^{1/q} \bigg( \fint_{2B\cap Y_*} |\phi u|^2 \bigg)^{1/2}  \\
    & \qquad + Cr \bigg\{ 1 + r\bigg( \fint_{B\cap Y_*} |V|^q \bigg)^{1/q} \bigg\} \bigg( \fint_{2B\cap Y_*} |F|^p \bigg)^{1/p},
\end{aligned}
\end{equation}
where $p,q>d$ and we have used Lemma \ref{lem.Moser} in the last inequality.


Note that $\nabla v= (\nabla \phi) u + \phi \nabla  u $. Thus, by \eqref{est.tv.Linfinity} and \eqref{local-moser},
\begin{equation}\label{est.phiDv.Y*}
\begin{aligned}
    \sup_{\frac12 B \cap Y_*} |\phi \nabla u| & \le \sup_{\frac12 B \cap Y_*} | (\nabla \phi ) u| + \sup_{\frac12 B \cap Y_*} |\nabla v| \\
    &  \le C \bigg\{ r^{-1}+ \bigg( \fint_{B\cap Y_*} |V|^q \bigg)^{1/q} \bigg\} \bigg( \fint_{2B\cap Y_*} |\phi u|^2 \bigg)^{1/2}  \\
    & \qquad + C r \bigg\{ 1 + r\bigg( \fint_{B\cap Y_*} |V|^q \bigg)^{1/q} \bigg\} \bigg( \fint_{2B\cap Y_*} |F|^p \bigg)^{1/p}.
\end{aligned}
\end{equation}
Finally, observe that  $u-L$ satisfies the same equation
\eqref{eq.rescaled.u} for any constant $L$. Thus $\phi u$ in \eqref{est.phiDv.Y*} can be replaced by $\phi(u-L)$ for any $L$. Moreover, it follows by a similar argument as in the proof of
the Poincar\'{e} inequality \eqref{est.u-L.Lp} that 
\begin{equation}
    \inf_{L\in \R}\bigg( \fint_{2B\cap Y_*} |\phi (u-L) |^2 \bigg)^{1/2} \le Cr\bigg( \fint_{3B\cap Y_*} |\phi \nabla u|^2 \bigg)^{1/2}.
\end{equation}
It follows from \eqref{est.phiDv.Y*} that
\begin{equation}\label{est.phiDv.Linfinity}
\begin{aligned}
    \sup_{\frac12 B \cap Y_*} |\phi \nabla u| &  \le C \big\{ 1+ \| V\|_{L^q(Y_*)} \big\} \bigg( \fint_{3B\cap Y_*} |\phi \nabla u|^2 \bigg)^{1/2}  \\
    & \qquad + C r \big\{ 1 + \| V\|_{L^q(Y_*)} \big\} \bigg( \fint_{3B\cap Y_*} |F|^p \bigg)^{1/p}. 
\end{aligned}
\end{equation}
By a simple covering argument, on may replace  $3B\cap Y_*$ in the right-hand side of \eqref{est.phiDv.Linfinity} by $B\cap Y_*$.
\end{proof}


\begin{theorem}\label{lem.Local Linfinity}
Let $u_\e\in V_\e$ be a solution of $
\mathcal{L}_\e (u_\e) =\phi_\e F  \text{ in } \e (z+Y_*^+)
$
for some $z\in \mathbb{Z}^d$.
Then for $p>d$,
\begin{equation}\label{est.local.Qe}
    \sup_{ \e(z+Y_*)} |u_\e| \le C
    \bigg( \fint_{\e(z+Y^+_*)} |\phi_\e u_\e|^2 \bigg)^{1/2} +C\e^2 \bigg(
    \fint_{ \e(z+Y^+_*)} |F|^p \bigg)^{1/p}, 
\end{equation}
and
\begin{equation}\label{est.local.Lip}
    \sup_{\e(z+Y_*)} |\phi_\e \nabla u_\e| \le C\bigg( \fint_{ \e(z+Y^+_*)} |\phi_\e \nabla u_\e|^2 \bigg)^{1/2} +C\e  \bigg( \fint_{ \e(z+Y^+_*)} |F|^p \bigg)^{1/p}.
\end{equation}
\end{theorem}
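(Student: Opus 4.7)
The strategy is to rescale to the unit cell and then combine Lemmas \ref{lem.Moser} and \ref{lem.SS.Lip} with standard interior elliptic regularity via a finite covering. Setting $u(x) = u_\e(\e x)$ and $G(x) = \e^2 F(\e x)$, a direct computation shows that $u$ satisfies
\begin{equation*}
-\text{\rm div}(\phi^2 A \nabla u) = \phi G \quad \text{in } z+Y_*^+,
\end{equation*}
and the change of variables $y = \e x$ yields the identities $\bigl(\fint_{z+Y_*^+} |\phi u|^2\bigr)^{1/2} = \bigl(\fint_{\e(z+Y_*^+)} |\phi_\e u_\e|^2\bigr)^{1/2}$, $\bigl(\fint_{z+Y_*^+} |\phi \nabla u|^2\bigr)^{1/2} = \e \bigl(\fint_{\e(z+Y_*^+)} |\phi_\e \nabla u_\e|^2\bigr)^{1/2}$, and $\bigl(\fint_{z+Y_*^+} |G|^p\bigr)^{1/p} \approx \e^2 \bigl(\fint_{\e(z+Y_*^+)} |F|^p\bigr)^{1/p}$. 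Thus it suffices to prove the $\e=1$ analogues of \eqref{est.local.Qe} and \eqref{est.local.Lip} for $u$ over $z+Y_*^+$, after which pulling back through the rescaling gives the claimed estimates.

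To establish those unit-scale bounds, I would cover the compact set $\overline{z+Y_*}$ by finitely many balls, treating two cases depending on the distance to the holes present in the enlarged cell. Fix a small universal constant $c_1 \in (0, c_0/8)$. If $x_0 \in z+Y_*$ satisfies $\dist(x_0, T+z) \ge c_1$, then on $B(x_0, c_1/2)$ the weight $\phi$ is bounded above and below by positive constants, so the equation for $u$ reduces to a uniformly elliptic divergence-form equation with H\"older continuous coefficients and $L^p$ right-hand side; the classical $L^\infty$ bound of De Giorgi--Nash--Moser and the interior Lipschitz estimate for divergence-form equations yield the required pointwise control at $x_0$ in terms of $L^2$ averages of $u$, resp.\ $\nabla u$, and the $L^p$ norm of $G$ on $B(x_0, c_1/2)$. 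If instead $\dist(x_0, T+z) < c_1$, pick the nearest point $x_0^\ast \in \partial T + z$ and a ball $B = B(x_0^\ast, r)$ of a fixed radius $r \approx c_1$ with $x_0 \in \tfrac12 B$ and $B \cap (z+Y_*) \subset z+Y_*^+$; Lemmas \ref{lem.Moser} and \ref{lem.SS.Lip}, applied after the translation that exploits the $Y$-periodicity of $\phi$ and $A$, then deliver the required estimates at $x_0$.

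Combining these pointwise bounds via a finite subcover of $\overline{z+Y_*}$ and transferring back through the rescaling yields \eqref{est.local.Qe} and \eqref{est.local.Lip}. The main technical point to watch is the choice of $c_1$: it must be small enough that the boundary-case balls lie inside $z+Y_*^+$ so that the intermediate averages can be absorbed into the global ones on $z+Y_*^+$, yet bounded below by a universal constant so that the finite covering in the interior case has uniformly controlled cardinality and the constants in the classical interior estimates are universal. Both conditions hold because $Y_*^+$ is the proper enlargement $\cup_{|w|_\infty \le 1}(w+Y_*)$ of $Y_*$ and the holes in $Y$ are $c_0$-separated from each other and from $\partial Y$.
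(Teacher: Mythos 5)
Your proposal is correct and follows essentially the same route as the paper: rescale to $\e=1$ (with exactly the scaling identities you record), then cover $z+Y_*$ by finitely many balls, invoking Lemmas \ref{lem.Moser} and \ref{lem.SS.Lip} for balls centered on (or near) $z+\partial T$ and classical De Giorgi--Nash--Moser/Schauder interior estimates on balls where $\phi$ is bounded above and below. The only cosmetic difference is that you recenter near-hole balls at the nearest point of $\partial T$ rather than taking the covering balls centered on $\partial T$ from the start, which changes nothing of substance.
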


\begin{proof}
By rescaling we may assume $\e=1$.
We cover $z+Y_*$ by a finite number of balls $\{B(x_i, c)\}$, where either $x_i \in z+\partial T$ or $B(x_i, 3c ) \subset Y_*^+$.
In the case $x_i \in z+\partial T$, we use the estimates in Lemmas \ref{lem.Moser} and \ref{lem.SS.Lip}.
If $B(x_i, 3c)\subset Y_*^+$, then $\phi \approx 1$ in $B(x_i, 2c)$.
This allows us to apply the classical regularity theory.
\end{proof}

Finally, we prove the local Lipschitz estimate near the boundary.

\begin{theorem}\label{lem.Local Linfinity2}
Let $\Omega$ be a bounded $C^{1,\alpha}$ domain. Let $u_\e\in V_\e$ be a solution of $\mathcal{L}_\e (u_\e) = \phi_\e F $ in $\Omega_\e$.
    Then for $p>d$,
    \begin{equation}\label{local-b1}
    \sup_{B\cap \Omega_\e } |u_\e| \le C\bigg( \fint_{2B\cap \Omega_\e } |\phi_\e u_\e|^2 \bigg)^{1/2} +C\e^2
    \bigg( \fint_{2B\cap \Omega_\e } |F|^p \bigg)^{1/p}
\end{equation}
    and
    \begin{equation}\label{local-b2}
        \sup_{B\cap \Omega_\e } |\phi_\e \nabla u_\e| 
        \le C\bigg( \fint_{2B\cap \Omega_\e } |\phi_\e \nabla u_\e|^2 \bigg)^{1/2} 
        +C\e \bigg( \fint_{2B\cap \Omega_\e } |F|^p \bigg)^{1/p},
    \end{equation}
    where $B=B(x_0, C \e)$ for any $x_0 \in \partial \Omega$.
\end{theorem}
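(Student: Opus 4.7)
\textbf{Proof plan for Theorem \ref{lem.Local Linfinity2}.} The key observation is that the geometric assumption \eqref{H}, namely $\text{dist}(\partial\Omega,\partial T_\e)\ge c_0$, means that near $\partial\Omega$ the weight $\phi_\e$ is bounded away from zero and the degeneracy of $\mathcal{L}_\e$ is not felt. More precisely, I would first fix the implicit constant $C$ in $B=B(x_0,C\e)$ so that $C\e<c_0/4$; then $2B$ lies in a $c_0/2$-neighborhood of $\partial\Omega$ and hence satisfies $\text{dist}(2B,T_\e)\ge c_0/2$. Consequently $2B\cap\Omega_\e=2B\cap\Omega$, and by \eqref{cond.phi-1} together with the periodicity of $\phi$ and $\phi\in C^{1,\alpha}(Y\setminus T)$, one has $c\le \phi_\e(x)\le C$ and $|\nabla\phi_\e(x)|\le C\e^{-1}$ uniformly on $2B$. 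Thus on $2B\cap\Omega$ the operator $\cL_\e$ is a \emph{uniformly} elliptic operator with Hölder-continuous oscillating coefficients $\phi_\e^2 A_\e$, and the problem is a standard boundary estimate problem in the $C^{1,\alpha}$ domain $\Omega$ with $u_\e=0$ on $\partial\Omega$.

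The natural next step is to rescale. Set $v(y)=u_\e(x_0+\e y)$ and $G(y)=\e^2\phi(x_0/\e+y)F(x_0+\e y)$. Then $v$ solves the nondegenerate equation
\begin{equation*}
-\text{div}\bigl(\phi^2(x_0/\e+y)\,A(x_0/\e+y)\,\nabla v\bigr)=G(y)\quad\text{in }\;\e^{-1}(2B-x_0)\cap\e^{-1}(\Omega-x_0),
\end{equation*}
with $v=0$ on the rescaled boundary. In the rescaled ball $B(0,2C)$ the coefficients are uniformly elliptic and uniformly Hölder continuous (the rescaled holes now lie at distance $c_0/\e$, far away), and $\e^{-1}(\Omega-x_0)$ is a $C^{1,\alpha}$ domain whose boundary in $B(0,2C)$ has Hölder seminorms uniformly bounded (even improving as $\e\to 0$). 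So classical boundary regularity results apply directly: the De Giorgi–Nash–Moser $L^\infty$ bound combined with the Poincaré–type inequality yields
\begin{equation*}
\sup_{B(0,C)\cap\e^{-1}\Omega}|v|\le C\bigl(\fint_{B(0,2C)\cap\e^{-1}\Omega}|v|^2\bigr)^{1/2}+C\|G\|_{L^p},
\end{equation*}
and the Schauder/Agmon–Douglis–Nirenberg boundary estimate yields
\begin{equation*}
\sup_{B(0,C)\cap\e^{-1}\Omega}|\nabla v|\le C\bigl(\fint_{B(0,2C)\cap\e^{-1}\Omega}|\nabla v|^2\bigr)^{1/2}+C\|G\|_{L^p}.
\end{equation*}

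Finally, I would undo the rescaling. Using $\phi_\e\approx 1$ on $2B$, we have $|v|\approx|\phi_\e u_\e|$ and $|\nabla v|=\e|\nabla u_\e|\approx\e|\phi_\e\nabla u_\e|$ pointwise on the rescaled region; a change of variables replaces $L^p$ norms of $G$ over $B(0,2C)$ by $\e^{2-d/p}\|F\|_{L^p(2B\cap\Omega_\e)}$, which is of the right size to produce the factors $\e^2$ and $\e$ in \eqref{local-b1} and \eqref{local-b2}, respectively. Putting everything together gives the two stated inequalities. Since the whole argument reduces to classical boundary Schauder/Moser estimates once \eqref{H} is used to eliminate the degeneracy, there is no real obstacle; the only point requiring care is verifying uniform-in-$\e$ control of the Hölder norm of the rescaled coefficients and of the boundary, which follows from periodicity of $\phi,A$ and from the fact that under the rescaling the $C^{1,\alpha}$ character of $\partial\Omega$ only improves.
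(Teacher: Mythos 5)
Your reduction collapses the whole theorem to a classical nondegenerate boundary estimate by claiming that, after choosing $C\e<c_0/4$, the doubled ball satisfies $\dist(2B,T_\e)\ge c_0/2$, so that $2B\cap\Omega_\e=2B\cap\Omega$ contains no holes and $\phi_\e\approx 1$ there. This is the gap. The separation between $\partial\Omega$ and the perforations guaranteed by \eqref{H} is in fact of order $\e$, not of order one: the paper's own proof of this theorem invokes $\dist(\partial\Omega,\partial T_\e)\ge c_0\e$, and the model examples (rectangles with sides on the lattice planes) only give $\e$-scale separation, since $T_\e$ is $\e$-periodic and the holes sit at distance $\approx c_0\e$ from the cell boundaries. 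Consequently $B(x_0,C\e)$ (with $C$ a given, possibly large constant — it cannot be shrunk at will, since the estimate is later applied with averages over cubes of size $m_0\e$) in general \emph{does} meet hole boundaries, the weight $\phi_\e$ genuinely degenerates inside $2B$, and the classical De Giorgi--Nash--Moser/Schauder boundary estimates you invoke are not applicable on all of $2B\cap\Omega$. Your rescaling argument is only valid in the thin layer $\{\dist(x,\partial\Omega)\le c\e\}$, where indeed $\phi_\e\approx 1$; this is precisely one of the three cases the theorem must treat, not the whole statement.

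What is missing is the degenerate part of the covering: one covers $B(x_0,C\e)\cap\Omega_\e$ by balls of radius $c\e$ which are either (1) centered on $\partial T_\e$, (2) contained in $\Omega_\e$ away from the holes, or (3) centered on $\partial\Omega$. Case (3) is your argument (classical boundary estimates in the $C^{1,\alpha}$ domain, using $\phi_\e\approx1$ there), and case (2) is the classical interior estimate; but case (1) requires the small-scale results proved for the cell problem, namely the Moser-iteration $L^\infty$ bound of Lemma \ref{lem.Moser} and the weighted Lipschitz bound of Lemma \ref{lem.SS.Lip}, which rests on the substitution $v=\phi u$ turning the degenerate equation into $-\mathrm{div}(A\nabla v)=Vu+F$ with $V=-\mathrm{div}(A\nabla\phi)\in L^{p_0}$, together with the hypothesis \eqref{cond.phi-2}. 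Without this ingredient (i.e., without reusing Theorem \ref{lem.Local Linfinity} near the holes inside $B$), the stated estimates \eqref{local-b1}--\eqref{local-b2}, which are deliberately phrased in terms of the weighted quantities $\phi_\e u_\e$ and $\phi_\e\nabla u_\e$ precisely because of this degeneracy, are not established by your argument.
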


\begin{proof}
We cover $B(x_0, C \e)\cap \Omega_\e$ by a finite number of balls $\{ B(x_i, c\e)\}$ such that the balls may be divided into three
cases: (1) $x_i \in \partial \Omega_\e\setminus \partial \Omega$, (2) $B(x_i, 3c\e) \subset \Omega_\e$, and (3) $x_i \in \partial \Omega$.
The first two cases have already been discussed in the proof of the last theorem.
If dist$(x, \partial \Omega)\le c\e $, then dist$(\partial \Omega, \partial T_\e) \ge c_0 \e$ and thus $\phi_\e (x)\approx  1 $.
As a result, the desired estimate for the third case
follows readily from the classical boundary Lipschitz estimates. 
\end{proof}

\begin{remark}\label{rmk.Lip.T'}
    If $\Omega$ is only a Lipschitz domain, then we do not expect the boundary Lipschitz estimates in Theorem \ref{lem.Local Linfinity2}. However, since the boundary $\partial \Omega$ does not touch the holes, we can still get the Lipschitz estimate in a neighborhood of the holes $T$. Precisely, let $T'$ be the extended holes given by $T'=\{ x\in Y_*: \dist(x,T) \le c_0/8 \}$. Then due to Lemma \ref{lem.SS.Lip},  even if $\e(z+Y_*) \cap \partial \Omega \neq \emptyset$, we still have
    \begin{equation}
        \sup_{\e(z+T'\setminus T)}  |\phi_\e \nabla u_\e| 
        \le C\bigg( \fint_{\e(z+ Y^+_*) \cap \Omega} |\phi_\e \nabla u_\e|^2 \bigg)^{1/2} 
        +C\e \bigg( \fint_{\e(z+ Y_*^+) \cap \Omega} |F|^p \bigg)^{1/p}.
    \end{equation}
\end{remark}

\section{Quantitative homogenization}\label{sec.4}

In this section, we will show the quantitative convergence rate (i.e., the first-order approximation) for the degenerate equation in a perforated domain, 
\begin{equation}\label{eq.ue}
    \cL_\e(u_\e) = \phi_\e f \quad \text{in } \Omega_\e \quad \text{and}\quad  u_\e = g \quad \text{on } \partial \Omega,
\end{equation}
where $f\in L^p(\Omega_\e)$ with $p>d$ and $g\in H^1(\Omega)$. An algebraic convergence rate will be essential in proving the large-scale uniform regularity.

We call $u_\e$ a weak solution of \eqref{eq.ue} if $u_\e-g \in V_\e$ 
and
$$
\int_{\Omega_\e} \phi_\e^2 A_\e \nabla u_\e \cdot \nabla \psi =\int_{\Omega_\e} \phi_\e f \psi
$$
for any $\psi \in V_\e$. Since $H^1(\Omega) \subset W^{1,2}_{\phi_\e}(\Omega_\e)$, it is not hard to see that \eqref{eq.ue} has a unique solution in $V_\e$ satisfying
\begin{equation}\label{est.energy}
    \| \phi_\e \nabla u_\e \|_{L^2(\Omega_\e)} \le C\big( \| f\|_{L^2(\Omega_\e)} + \| \phi_\e \nabla g\|_{L^2(\Omega_\e)} \big).
\end{equation}

\subsection{Correctors and the homogenized equation}

Let
\begin{equation*}
    V=\left\{ v\in L^1_{\rm loc} (Y_*):
    \phi v\in L^2(Y_*), \ \phi\nabla v\in  L^2(Y_*)^d, \int_{Y_*} v=0 \text{ and  $v$ is $Y$-periodic} \right\}
\end{equation*}
be a Hilbert space with norm $\| v\|_V = \| \phi \nabla v \|_{L^2(Y_*)}$.
Let $\chi_j \in V$ be the weak solution of
\begin{equation*}
    -\text{\rm div} (\phi^2 A \nabla \chi_j ) =\text{\rm div} (\phi^2  A\nabla y_j) \quad \text{ in } Y_*,
\end{equation*}
i.e. for any $v\in V$, 
\begin{equation*}
    \int_{Y_*} \phi^2 A \nabla \chi_j \cdot \nabla v
    =-\int_{Y_*} \phi^2 A \nabla y_j \cdot \nabla v.
\end{equation*}
The vector-valued function $\chi=(\chi_j)$ is called the first-order correctors for the
operator $\mathcal{L}_\e =-\text{\rm div} (\phi_\e^2 A_\e \nabla )$.
Observe  that 
$$
\text{\rm div} (\phi^2 A \nabla (\chi_j + y_j))=0 \quad \text{ in }
\mathbb{R}^d \setminus \bigcup_{z\in \mathbb{Z}^d} (z+T).
$$
It follows from Theorem \ref{lem.Local Linfinity} that $|\chi| \in L^\infty(Y_*)$ and
$|\phi \nabla \chi| \in L^\infty(Y_*)$.

The homogenized coefficient matrix $\widehat{A}= (\widehat{a}_{ij} )$ is defined by
\begin{equation*}
    \widehat{a}_{ij}
    =\int_{Y_*}
    \phi^2 a_{ik} \partial_k \left( y_j +\chi_j\right) dy,
\end{equation*}
where $1\le i, j \le d$ and the repeated index $k$ is summed from $1$ to $d$. Note that 
$$
\widehat{a}_{ij}
=\int_{Y_*}
\phi^2 A \nabla (y_j+\chi_j) \cdot \nabla (y_i+\chi_i)\, dy.
$$
Since $A$ is positive definite and $\phi>0$ in $Y_*$, it is not hard to see that the constant matrix $\widehat{A}$  is positive
definite.
Under the assumption that $A$ is symmetric, $\widehat{A}$ is also symmetric.
The homogenized operator for $\mathcal{L}_\e$ is given by
\begin{equation*}
    \mathcal{L}_0 =-\text{\rm div} ( \widehat{A} \nabla ).
\end{equation*}

\subsection{First-order approximation}\label{sec.1stOrder}

In this subsection, we derive the error estimate of the first-order approximation for the boundary value problem.
We will show that an effective approximate problem for \eqref{eq.ue} is given by
\begin{equation}\label{eq.u00}
    \cL_0(u_0) = F_\e \quad \text{in } \Omega, \quad \text{and}\quad  u_0 = g \quad \text{on } \partial \Omega,
\end{equation}
where $F_\e\in L^p(\Omega)$ is the extension of $\phi_\e f$ by zero from $\Omega_\e$ to $\Omega$.
Throughout this subsection, we only assume that $\Omega$ is a Lispchitz domain satisfying the geometric assumption \eqref{H}.  


Let $c_1 \in (0,\frac14 c_0]$ be a constant. Let $\eta_\e \in C_0^\infty(\Omega)$ be a cutoff function such that $\eta_\e = 1$ if $\dist(x,\partial \Omega)>2c_1 \e$, $\eta_\e = 0$ if $\dist(x,\partial \Omega)< c_1 \e$, and $|\nabla\eta_\e| \le C\e^{-1}$. Define
\begin{equation*}
    \Omega(t\e) = \left\{ x\in \Omega: \dist(x,\partial \Omega) < tc_1 \e \right\}.
\end{equation*}
Observe that $\nabla \eta_\e$ is supported in a thin layer $\Omega(2\e) \setminus \Omega(\e) = \{ x\in \Omega: c_1 \e \le \dist(x,\partial \Omega) < 2c_1 \e \}$. 
Let
\begin{equation}\label{def.we}
    w_\e = u_\e - u_0 - \e \chi_\ell(x/\e) (\partial_\ell u_0) \eta_\e.
\end{equation}
Since $ F_\e \in L^2(\Omega)$, $u_0\in H_{\rm loc}^2(\Omega)$. Recall that $\chi_\ell$ and $\phi \nabla \chi_\ell$ are both bounded. 


\begin{lemma}\label{lem.Dwe.Dh}
    For any $\psi \in H^1_0(\Omega)$, we have
    \begin{equation}\label{est.DweDh}
    \aligned
        & \bigg| \int_{\Omega_\e} \phi_\e^2 A_\e \nabla w_\e \cdot \nabla \psi \bigg|\\
        & \le C \e \| \nabla^2 u_0 \|_{L^2(\Omega\setminus \Omega(\e))} \| \nabla \psi \|_{L^2(\Omega )} + C \| \nabla u_0 \|_{L^2(\Omega(2\e))} \| \nabla \psi \|_{L^2(\Omega(2\e))} .
    \endaligned
    \end{equation}
\end{lemma}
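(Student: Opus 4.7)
The plan is the standard first-order two-scale expansion, adapted to the perforated weighted setting. Test $\cL_\e u_\e = \phi_\e f$ against $\psi|_{\Omega_\e}$ (which lies in $V_\e$ since $\phi_\e$ is bounded and $\psi=0$ on $\partial\Omega$), and combine with $\cL_0 u_0 = F_\e$ on all of $\Omega$ to obtain
\begin{equation*}
\int_{\Omega_\e}\phi_\e^2 A_\e \nabla u_\e\cdot\nabla\psi = \int_\Omega F_\e \psi = \int_\Omega \widehat{A}\nabla u_0 \cdot \nabla\psi.
\end{equation*}
Subtract the contribution from $\nabla[u_0 + \e\chi_\ell(x/\e)(\partial_\ell u_0)\eta_\e]$ and simplify via the identity $\phi^2(y) a_{ik}(y)(\delta_{k\ell}+\partial_k\chi_\ell(y)) = \widehat{a}_{i\ell} + b_{i\ell}(y)$ with $b_{i\ell}(y) := \phi^2(y)a_{ik}(y)(\delta_{k\ell}+\partial_k\chi_\ell(y)) - \widehat{a}_{i\ell}$. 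The result decomposes $\int_{\Omega_\e}\phi_\e^2 A_\e \nabla w_\e \cdot \nabla\psi$ into: (a) a boundary-layer integral $\int_{\Omega_\e}[\widehat{a}_{i\ell}-\phi_\e^2 a_{i\ell}(x/\e)](1-\eta_\e)(\partial_\ell u_0)\partial_i\psi$, supported in $\Omega(2\e)$; (b) a hole-volume residue $\int_{T_\e\cap\Omega}\widehat{A}\nabla u_0\cdot\nabla\psi$ arising from $\int_\Omega - \int_{\Omega_\e}\widehat{A}$; (c) the main ``flux'' integral $-\int_{\Omega_\e}b_{i\ell}(x/\e)\eta_\e(\partial_\ell u_0)\partial_i\psi$; and (d) the first-order remainder $-\e\int_{\Omega_\e}\phi_\e^2 a_{ik}(x/\e)\chi_\ell(x/\e)\partial_k[(\partial_\ell u_0)\eta_\e]\partial_i\psi$.

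Terms (a) and (d) are elementary. Term (a) is directly bounded by the boundary-layer contribution $C\|\nabla u_0\|_{L^2(\Omega(2\e))}\|\nabla\psi\|_{L^2(\Omega(2\e))}$. Term (d) splits depending on whether $\partial_k$ falls on $\partial_\ell u_0$ or on $\eta_\e$: using the $L^\infty$ bounds on $\chi_\ell$ from Theorem \ref{lem.Local Linfinity}, the first piece is controlled by $C\e\|\nabla^2 u_0\|_{L^2(\Omega\setminus\Omega(\e))}\|\nabla\psi\|_{L^2(\Omega)}$ (since $\eta_\e$ is supported in $\Omega\setminus\Omega(\e)$), while the second exploits $|\nabla\eta_\e|\le C\e^{-1}$ on $\Omega(2\e)$ so the leading factor of $\e$ cancels and yields the boundary-layer bound.

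The core of the argument is handling (b) and (c) simultaneously via a flux corrector. From the corrector equation one has $\partial_i b_{i\ell}=0$ in $Y_*$; since $\phi=0$ on $\partial T$, the boundary trace of $b_{i\ell}$ on $\partial T$ equals $-\widehat{a}_{i\ell}$. Extending $b_{i\ell}$ to $Y$ by $\bar b_{i\ell} \equiv -\widehat{a}_{i\ell}$ on $T$ produces a $Y$-periodic tensor, divergence-free in $\mathscr{D}'(Y)$ (normal traces match across $\partial T$); the identity $\widehat{a}_{i\ell} = \int_{Y_*}\phi^2 a_{ik}(\delta_{k\ell}+\partial_k\chi_\ell)$ together with $|Y_*|+|T|=|Y|$ forces $\int_Y \bar b_{i\ell}=0$. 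Consequently, a bounded, $Y$-periodic, $(j,i)$-skew-symmetric potential $\mathfrak{B}_{ji\ell}$ exists with $\partial_j\mathfrak{B}_{ji\ell}=\bar b_{i\ell}$ on $Y$. Using $b_{i\ell}(x/\e)\mathbf{1}_{\Omega_\e}(x) = \bar b_{i\ell}(x/\e)+\widehat{a}_{i\ell}\mathbf{1}_{T_\e\cap\Omega}(x)$, the combination (b)+(c) becomes
\begin{equation*}
-\int_\Omega \bar b_{i\ell}(x/\e)\eta_\e(\partial_\ell u_0)\partial_i\psi + \int_{T_\e\cap\Omega}\widehat{a}_{i\ell}(1-\eta_\e)(\partial_\ell u_0)\partial_i\psi.
\end{equation*}
The second integral is supported in $T_\e\cap\Omega(2\e)$ and gives the boundary-layer bound. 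For the first, write $\bar b_{i\ell}(x/\e) = \partial_j[\e\mathfrak{B}_{ji\ell}(x/\e)]$ and integrate by parts; the boundary term on $\partial\Omega$ vanishes since $\eta_\e$ does, and the piece $\e\int\mathfrak{B}_{ji\ell}(x/\e)\eta_\e(\partial_\ell u_0)\partial_j\partial_i\psi$ vanishes after $(j,i)$-symmetrization by skew-symmetry of $\mathfrak{B}$ (justified by density of smooth functions in $H^1_0(\Omega)$). The two surviving terms, $\e\int\mathfrak{B}_{ji\ell}(x/\e)(\partial_j\eta_\e)(\partial_\ell u_0)\partial_i\psi$ and $\e\int\mathfrak{B}_{ji\ell}(x/\e)\eta_\e(\partial_j\partial_\ell u_0)\partial_i\psi$, produce respectively the boundary-layer bound and the $\e\|\nabla^2 u_0\|_{L^2(\Omega\setminus\Omega(\e))}\|\nabla\psi\|_{L^2(\Omega)}$ bound. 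The main obstacle is the flux-corrector construction itself: the hypothesis $\phi=0$ on $\partial T$ is essential for the normal-trace compatibility that makes $\bar b_{i\ell}$ divergence-free on $Y$ and that enforces the zero-mean condition required for a periodic potential $\mathfrak{B}$; once $\mathfrak{B}$ is available, the remaining bookkeeping is routine.
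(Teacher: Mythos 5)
Your proposal is correct and takes essentially the same route as the paper: use both equations to replace $\int_{\Omega_\e}\phi_\e^2A_\e\nabla u_\e\cdot\nabla\psi$ by $\int_\Omega\widehat{A}\nabla u_0\cdot\nabla\psi$, isolate a boundary-layer term via $\eta_\e$, bound the first-order remainder by the boundedness of $\chi$, and handle the main flux term with a bounded, periodic, skew-symmetric potential of the mean-zero divergence-free flux, integrating by parts and using density of smooth functions to kill the second-derivative-of-$\psi$ term. The only difference is bookkeeping: you make explicit the constant extension of the flux into the holes and track the resulting hole-volume residue separately (noting it lives in the layer, where in fact it vanishes by the assumption $\mathrm{dist}(\partial\Omega,\partial T_\e)\ge c_0$), whereas the paper absorbs this implicitly by writing its $B=\widehat{A}-\phi_\e^2A_\e-\phi_\e^2A_\e(\nabla\chi)_\e$ on all of $\Omega$ with the zero-extension convention in $T_\e$.
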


\begin{proof}
First of all,  $\psi \in H^1_0(\Omega)$ implies $\psi \in V_\e $. Thus $\psi$ can be used as a test function.
We calculate directly
\begin{equation}\label{eq.Dwe.Dh}
\begin{aligned}
    \int_{\Omega_\e} \phi_\e^2 A_\e \nabla w_\e \cdot \nabla \psi & = \int_{\Omega_\e} \phi_\e^2 A_\e\nabla u_\e \cdot \nabla \psi
    - \int_{\Omega_\e} \phi_\e^2 A_\e \nabla u_0 \cdot \nabla \psi \\
    &\qquad  - \int_{\Omega_\e} \phi_\e^2 A_\e (\nabla \chi_\ell)_\e (\partial_\ell u_0) \eta_\e \cdot \nabla \psi \\
    &\qquad- \e \int_{\Omega_\e} \phi_\e^2 A_\e(\chi_\ell)_\e (\nabla \partial_\ell u_0) \eta_\e \cdot \nabla \psi \\
    & \qquad - \e \int_{\Omega_\e} \phi_\e^2 A_\e (\chi_\ell)_\e \partial_\ell u_0 \nabla \eta_\e \cdot \nabla \psi .
\end{aligned}
\end{equation}
Using the equations \eqref{eq.ue} and \eqref{eq.u00}, we have
\begin{equation*}
    \int_{\Omega_\e} \phi_\e^2 A_\e \nabla u_\e \cdot \nabla \psi  = \int_{\Omega} \widehat{A}\nabla u_0 \cdot \nabla \psi . 
\end{equation*}
Inserting this equation into \eqref{eq.Dwe.Dh}, we obtain
\begin{equation}\label{eq.Dwe.Dh2}
\begin{aligned}
    \int_{\Omega_\e} \phi_\e^2 A_\e \nabla w_\e \cdot \nabla \psi  & = \int_{\Omega} (\widehat{A} - \phi_\e^2 A_\e 
    - \phi_\e^2 A_\e (\nabla \chi)_\e ) (\nabla u_0) \eta_\e \cdot \nabla \psi  \\
    &\qquad + \int_{\Omega} (\widehat{A} - \phi_\e^2 A_\e ) \nabla u_0 (1-\eta_\e) \cdot \nabla \psi \\
    &\qquad  - \e \int_{\Omega_\e} \phi_\e^2 A_\e(\chi_\ell)_\e (\nabla \partial_\ell u_0) \eta_\e \cdot \nabla \psi \\
&\qquad     - \e \int_{\Omega_\e} \phi_\e^2 A_\e (\chi_\ell)_\e \partial_\ell u_0 \nabla \eta_\e \cdot \nabla \psi .
\end{aligned}
\end{equation}

Next, we define the flux correctors. Let $B = \widehat{A} - \phi^2 A - \phi^2 A\nabla \chi$, or in component form
\begin{equation*}
    b_{ij} = \widehat{a}_{ij} - \phi^2 a_{ij} - \phi^2 a_{ik} \partial_k \chi_j.
\end{equation*}
Observe that $b_{ij} \in  L^\infty(Y)$ , and
\begin{equation*}
    \int_{Y} b_{ij} = 0, \qquad \partial_i b_{ij} = 0.
\end{equation*}
Then it is well-known (see, e.g., \cite{Shen18}) that we can find the flux correctors $\Phi = (\Phi_{kij}) \in H_{\rm per}^1(Y)$ such that
\begin{equation*}
    b_{ij} = \partial_k \Phi_{kij}, \qquad \Phi_{kij} = -\Phi_{ikj}.
\end{equation*}
Moreover, $\Phi_{kij} \in C^\alpha(Y)$ for any $\alpha \in (0,1)$. 
Therefore, by using the skew-symmetry of $\Phi$ and integration by parts,
\begin{equation*}
\begin{aligned}
    & \int_{\Omega} (\widehat{A} - \phi_\e^2 A_\e - \phi_\e^2 A_\e (\nabla \chi)_\e ) (\nabla u_0) \eta_\e \cdot \nabla \psi \\
    & = \int_{\Omega} \partial_k(\e \Phi_{kij}(x/\e) ) (\partial_j u_0)( \partial_i \psi ) \eta_\e \\
    & = - \e \int_\Omega \Phi_{kij}(x/\e)(\partial_k\partial_j u_0)( \partial_i \psi ) \eta_\e -\e \int_{\Omega} \Phi_{kij}(x/\e) \partial_j u_0 \partial_i \psi  \partial_k \eta_\e.
\end{aligned}
\end{equation*}
The first integral on the right-hand side is bounded by $C \e \| \nabla^2 u_0 \|_{L^2(\Omega\setminus \Omega(\e))} \| \nabla \psi  \|_{L^2(\Omega )}$,
while the second integral is bounded by
\begin{equation}\label{est.Du0.layer}
    C \e \int_{\Omega(2\e)} |\nabla u_0| |\nabla \psi | \le C \e \| \nabla u_0 \|_{L^2(\Omega(2\e))} \| \nabla \psi  \|_{L^2(\Omega(2\e))}.
\end{equation}
Finally, note that the third integral on the right-hand side of \eqref{eq.Dwe.Dh2} is also bounded by $C \e \| \nabla^2 u_0 \|_{L^2(\Omega\setminus \Omega(\e))} \| \nabla h \|_{L^2(\Omega )}$, and the second and fourth integrals are bounded by the right-hand side of \eqref{est.Du0.layer}. Summing up all these estimates, we obtain the desired estimate.
\end{proof}


Now we pick a particular test function $\psi $. We point out that $w_\e$ itself, even with a cut-off on the boundary is not a good choice for $\psi $ since $\nabla u_\e$ may not lie in $L^2(\Omega)^d$ (we only know $\phi_\e \nabla u_\e \in L^2(\Omega)^d$). We will use the harmonic extension to handle the possible singularity of $\nabla u_\e$ near the holes. 
Let $\delta\in (0,c_1]$. Define
\begin{equation*}
    \Omega^\delta_\e: = \left\{x\in \Omega_\e: \dist(x,T_\e)>\delta \e\right \}.
\end{equation*}
Note that $|\Omega_\e \setminus \Omega^\delta_\e| \le C\delta$. We shall 
extend the function $u_\e$ and $\chi(x/\e)$ from $\Omega^\delta_\e$ to $\Omega$ with a suitable choice of $\delta$. We define $T^\delta = \{ x\in Y: \dist(x,T) \le \delta \}$ and $Y_*^\delta = Y\setminus T^\delta$.





Another technical tool we need is the smoothing operator. Let $0\le \zeta \in C_0^\infty(B_{c_1}(0))$ and $\int_{B_{c_1}(0)} \zeta = 1$ and define the standard smoothing operator by
\begin{equation*}
    \mathscr{K}_\e f(x) = \int_{\mathbb{R}^d}  \e^{-d} \zeta(\frac{x-y}{\e})f(y) dy.
\end{equation*}
Many properties about the above smoothing operators can be found in \cite[Chapter 3.1]{Shen18}. We include some useful properties in Appendix for the reader's convenience.

\begin{lemma}\label{lem.Dwet}
Let $\Omega_\e^\delta$ be given as above. Let $\tilde{u}_\e$ be the harmonic extension of $u_\e$ from $\Omega_\e^\delta$ to $\Omega_\e$. It holds
    \begin{equation}\label{est.FullRate}
        \begin{aligned}
            \int_{\Omega_\e} \phi_\e^2 |\nabla w_\e|^2 & \le C\e \delta^{-1} \| \nabla^2 u_0 \|_{L^2(\Omega\setminus \Omega(\e))} \big( \| \phi_\e \nabla u_\e \|_{L^2(\Omega_\e)} + \| \nabla u_0\|_{L^2(\Omega)} \big) \\
        & \qquad + C\| \nabla u_0 \|_{L^2(\Omega(2\e))}^2 + C\e^2 \| \nabla^2 u_0 \|_{L^2(\Omega\setminus \Omega(\e/2))}^2  + C\delta \| \nabla u_0\|_{L^2(\Omega)}^2 \\
        & \qquad + C\int_{\Omega_\e \setminus \Omega_\e^\delta} \phi_\e^2 (|\nabla u_\e|^2 + |\nabla\tilde{u}_\e|^2).
        \end{aligned}
    \end{equation}
\end{lemma}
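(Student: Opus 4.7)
The plan is to use coercivity of $\phi_\e^2 A_\e$ together with Lemma \ref{lem.Dwe.Dh}, but with a carefully chosen test function that is globally admissible in $H^1_0(\Omega)$ and yet close enough to $w_\e$ on the bulk of $\Omega_\e$ that the discrepancy is absorbed. Since only $\phi_\e \nabla u_\e$, not $\nabla u_\e$, lies in $L^2$, the function $w_\e$ itself is not directly usable. I would construct
$$
\tilde{w}_\e := \tilde{u}_\e - u_0 - \e\, (\mathscr{K}_\e \chi_\ell)(x/\e)\, (\partial_\ell u_0)\, \eta_\e,
$$
where $\tilde{u}_\e$ is the harmonic extension of $u_\e$ from $\Omega_\e^\delta$ (where $\phi_\e \gtrsim \delta$) first across the thin annulus to $\Omega_\e$ and then across the holes to all of $\Omega$, and where $\mathscr{K}_\e$ (from the Appendix) smooths the corrector so $\tilde{w}_\e \in H^1_0(\Omega)$. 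Since $\eta_\e \equiv 0$ in a neighborhood of $\partial\Omega$ and $\tilde{u}_\e = u_\e = g$ there, we have $\tilde{w}_\e = 0$ on $\partial\Omega$.

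Ellipticity of $A_\e$ gives
$$
\mu \int_{\Omega_\e} \phi_\e^2 |\nabla w_\e|^2 \le \int_{\Omega_\e} \phi_\e^2 A_\e \nabla w_\e \cdot \nabla \tilde{w}_\e + \int_{\Omega_\e} \phi_\e^2 A_\e \nabla w_\e \cdot \nabla (w_\e - \tilde{w}_\e) =: I + II.
$$
For $I$, I would apply Lemma \ref{lem.Dwe.Dh} with $\psi = \tilde{w}_\e$. The key estimate is then
$$
\|\nabla \tilde{w}_\e\|_{L^2(\Omega)} \le C\delta^{-1}\bigl(\|\phi_\e \nabla u_\e\|_{L^2(\Omega_\e)} + \|\nabla u_0\|_{L^2(\Omega)}\bigr) + C\e \|\nabla^2 u_0\|_{L^2(\Omega \setminus \Omega(\e/2))},
$$
where the factor $\delta^{-1}$ enters via $\|\nabla \tilde{u}_\e\|_{L^2(\Omega)} \le C\|\nabla u_\e\|_{L^2(\Omega_\e^\delta)} \le C\delta^{-1}\|\phi_\e \nabla u_\e\|_{L^2(\Omega_\e)}$, using the Lemma \ref{lem.Gradient Ext} style harmonic extension bound and $\phi_\e \gtrsim \delta$ on $\Omega_\e^\delta$. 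The corrector contribution is controlled using $\|\chi_\ell\|_\infty + \|\phi\nabla\chi_\ell\|_\infty \le C$ (from Theorem \ref{lem.Local Linfinity}) and the standard properties of $\mathscr{K}_\e$. Multiplying by $C\e\|\nabla^2 u_0\|_{L^2(\Omega \setminus \Omega(\e))}$ produces the first two contributions in \eqref{est.FullRate}, including the $C\e^2\|\nabla^2 u_0\|^2_{L^2(\Omega \setminus \Omega(\e/2))}$ term from the self-interaction.

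For the second term of Lemma \ref{lem.Dwe.Dh}, I exploit the geometric hypothesis \eqref{H}: in $\Omega(2\e)$ there are no holes, so $\phi_\e \approx 1$ and $\tilde{u}_\e = u_\e$; splitting $\nabla \tilde{w}_\e$ according to its three constituents and applying Young's inequality (once with weight $\delta$ on the $u_0$-corrector cross-term) yields the term $C\|\nabla u_0\|^2_{L^2(\Omega(2\e))}$ and the term $C\delta\|\nabla u_0\|^2_{L^2(\Omega)}$ in \eqref{est.FullRate}. For $II$, since $u_\e = \tilde{u}_\e$ and $\chi_\ell(x/\e) = \mathscr{K}_\e\chi_\ell(x/\e)$ in $\Omega_\e^\delta$ up to a negligible smoothing shell, the integrand is supported in $\Omega_\e \setminus \Omega_\e^\delta$. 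Cauchy--Schwarz followed by Young lets one absorb $\tfrac14 \mu \int_{\Omega_\e} \phi_\e^2|\nabla w_\e|^2$ into the left-hand side and leaves the remainder $C\int_{\Omega_\e \setminus \Omega_\e^\delta} \phi_\e^2 (|\nabla u_\e|^2 + |\nabla \tilde{u}_\e|^2)$.

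The main obstacle is the joint calibration between extension radius and admissibility: $\delta$ must be large enough that the harmonic extension $L^2$-bound has a usable constant $\delta^{-1}$, yet small enough that the layer error in $II$ (the last term of \eqref{est.FullRate}) can later be shown to tend to zero. Handling the smoothed corrector $\mathscr{K}_\e \chi_\ell(x/\e)$ so that it still satisfies the identity used in Lemma \ref{lem.Dwe.Dh} (where the plain $\chi_\ell(x/\e)$ appears in $w_\e$) is the other delicate point, and this is why the error $w_\e - \tilde{w}_\e$ ends up tied exactly to the layer around the holes.
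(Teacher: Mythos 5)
Your overall skeleton matches the paper's: test the equation (via Lemma \ref{lem.Dwe.Dh}) against an $H^1_0(\Omega)$ modification of $w_\e$ built from the harmonic extension $\tilde{u}_\e$, and absorb the discrepancy between $w_\e$ and the test function by Cauchy--Schwarz. However, the specific construction has a genuine gap. You regularize the \emph{corrector}, replacing $\chi_\ell(x/\e)$ by $(\mathscr{K}_\e\chi_\ell)(x/\e)$, while keeping $\partial_\ell u_0$ unsmoothed. This breaks the two mechanisms the estimate actually needs. First, a mollification of $\chi$ differs from $\chi$ \emph{everywhere}, not only near the holes, so your claim that $w_\e-\tilde{w}_\e$ is supported in $\Omega_\e\setminus\Omega_\e^\delta$ ``up to a negligible smoothing shell'' is false; consequently $II$ cannot be reduced to the layer integral $\int_{\Omega_\e\setminus\Omega_\e^\delta}\phi_\e^2(|\nabla u_\e|^2+|\nabla\tilde u_\e|^2)$. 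The paper avoids this by using the \emph{harmonic extension} $\tilde\chi$ of $\chi$ across the $\delta$-collars $T^\delta$ (so $\tilde\chi=\chi$ exactly outside $T^\delta$, and the residual layer contribution is controlled by the nontangential maximal function estimate, Lemma \ref{lem.nontangential.L2}, which is where the $C\delta\|\nabla u_0\|_{L^2(\Omega)}^2$ term comes from), and by smoothing $\partial_\ell u_0$ rather than $\chi$; the remaining bulk discrepancy $\e\chi_\ell(x/\e)(\partial_\ell u_0-\mathscr{K}_\e\partial_\ell u_0)\eta_\e$ is then small by the smoothing-error estimate (Lemma \ref{lem.smoothing.error}), giving the $C\e^2\|\nabla^2u_0\|^2_{L^2(\Omega\setminus\Omega(\e/2))}$ term. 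Your sketch provides no substitute for either step.

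Second, your key bound $\|\nabla\tilde w_\e\|_{L^2(\Omega)}\le C\delta^{-1}(\|\phi_\e\nabla u_\e\|_{L^2(\Omega_\e)}+\|\nabla u_0\|_{L^2(\Omega)})+C\e\|\nabla^2u_0\|_{L^2(\Omega\setminus\Omega(\e/2))}$ is asserted but not obtainable from your construction. The troublesome piece is the corrector gradient times $\nabla u_0$: with an unsmoothed $\partial_\ell u_0$ multiplying a rapidly oscillating, near-hole-singular factor, there is no way to bound the product in $L^2$ by $\|\nabla u_0\|_{L^2(\Omega)}$ times a constant; Lemma \ref{lem.smoothing} requires the smoothing to sit on the non-periodic factor (the paper bounds $\|(\nabla\tilde\chi)_\e\,\mathscr{K}_\e(\nabla u_0)\|_{L^2}\le C\|\nabla\tilde\chi\|_{L^2(Y)}\|\nabla u_0\|_{L^2(\Omega)}\le C\delta^{-1}\|\nabla u_0\|_{L^2(\Omega)}$). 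Moreover, your mollification is at scale $\e$, not $\delta$, so even the periodic factor's $L^2(Y)$ norm would not naturally produce the stated $\delta^{-1}$. To repair the proof you should adopt the paper's pairing: harmonically extend $\chi$ across $T^\delta$, smooth $\nabla u_0$ with $\mathscr{K}_\e$, and split off the intermediate function $w_\e^*=u_\e-u_0-\e\chi_\ell(x/\e)\mathscr{K}_\e(\partial_\ell u_0)\eta_\e$ so that the three error sources (test-function estimate, smoothing error in the bulk, layer error near the holes) are handled separately.
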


\begin{proof}
     Let $\tilde{\chi}_\e = \tilde{\chi}(x/\e)$ denote 
     the harmonic extension of $\chi_\e = \chi(x/\e)$ from $\Omega_\e^\delta$ to $\Omega$. For simplicity, we sometimes also use the notations $(\chi_\ell)_\e = \chi_\ell(x/\e)$ and $(\nabla \chi)_\e = (\nabla \chi)(x/\e)$ (similar notations apply for $\tilde{\chi}$ and $\nabla \tilde{\chi}$).
     
     Let $\eta_\e$ be given as before. Define
    \begin{equation*}
        w_\e^* = u_\e - u_0 - \e \chi_\ell(x/\e) \mathscr{K}_\e(\partial_\ell u_0) \eta_\e,
    \end{equation*}
    and
    \begin{equation*}
    w^\delta_\e = \tilde{u}_\e  - u_0 -\e \tilde{\chi}_\ell(x/\e) \mathscr{K}_\e(\partial_\ell u_0) \eta_\e.
    \end{equation*}
    It is easy to verify that $w^\delta_\e \in H^1_0(\Omega)$. Write
    \begin{equation}\label{eq.Dwe.I123}
    \begin{aligned}
        & \int_{\Omega_\e} \phi_\e^2 A_\e \nabla w_\e \cdot \nabla w_\e \\
        & = \int_{\Omega_\e} \phi_\e^2 A_\e  \nabla w_\e \cdot \nabla w_\e^\delta + \int_{\Omega_\e} \phi_\e^2  A_\e\nabla w_\e \cdot \nabla (w_\e - w_\e^*) + \int_{\Omega_\e} \phi_\e^2 A_\e \nabla w_\e \cdot \nabla (w_\e^* - w_\e^\delta) \\
        & = I_1 + I_2 + I_3.        
    \end{aligned}
    \end{equation}

    \textbf{Estimate of $I_1$:} Since $w^\delta_\e \in H^1_0(\Omega)$, we apply Lemma \ref{lem.Dwe.Dh} to obtain
    \begin{equation}\label{est.I1}
        |I_1| \le C \e \| \nabla^2 u_0 \|_{L^2(\Omega\setminus \Omega(\e))} \| \nabla w^\delta_\e \|_{L^2(\Omega )} + C \| \nabla u_0 \|_{L^2(\Omega(2\e))} \| \nabla w^\delta_\e \|_{L^2(\Omega(2\e))} .
    \end{equation}
    By the triangle inequality,
    \begin{equation}\label{est.Dwedelta}
    \begin{aligned}
        & \| \nabla w^\delta_\e \|_{L^2(\Omega)} \\
        & \le \| \nabla \tilde{u}_\e \|_{L^2(\Omega)} + \| \nabla u_0 \|_{L^2(\Omega)} + \e \| \nabla (\tilde{\chi}_\ell(x/\e) \mathscr{K}_\e(\partial_\ell u_0) \eta_\e) \|_{L^2(\Omega)} \\
        & \le \| \nabla \tilde{u}_\e \|_{L^2(\Omega)} + C\| \nabla u_0 \|_{L^2(\Omega)} + C\| (\nabla\tilde{\chi})_\e \mathscr{K}_\e(\nabla u_0) \|_{L^2(\Omega\setminus \Omega(\e))} \\
        &\qquad\qquad\qquad
        + C\e\| \nabla^2 u_0 \|_{L^2(\Omega\setminus \Omega(\e/2))}.
    \end{aligned}
    \end{equation}
    The boundedness of the harmonic extension (see Lemma \ref{lem.Gradient Ext}) implies
    \begin{equation*}
        \| \nabla \tilde{u}_\e \|_{L^2(\Omega)} \le C\| \nabla u_\e \|_{L^2(\Omega_\e^\delta)} \le C\delta^{-1} \| \phi_\e \nabla u_\e \|_{L^2(\Omega_\e)},
    \end{equation*}
    where we used the fact $\phi_\e \ge c\delta$ on $\Omega_\e^\delta$. The boundedness of $\phi \nabla \chi$  implies 
    \begin{equation*}
        \| \nabla \tilde{\chi} \|_{L^2(Y)} \le C \| \nabla {\chi} \|_{L^2(Y^\delta_*)} \le C\delta^{-1},
    \end{equation*}
    where $Y^\delta_* = \{x\in Y: \dist(x,T) >\delta \}$.
    Hence, we can use a property of the smoothing operator (see Lemma \ref{lem.smoothing}) to obtain
    the estimate of the third term in the right-hand side  of \eqref{est.Dwedelta}, 
    \begin{equation*}
        \| (\nabla\tilde{\chi})_\e \mathscr{K}_\e(\nabla u_0) \|_{L^2(\Omega\setminus \Omega(\e))} \le C \| \nabla \tilde{\chi} \|_{L^2(Y)} \| \nabla u_0 \|_{L^2(\Omega)}  \le C\delta^{-1} \| \nabla u_0 \|_{L^2(\Omega)}.
    \end{equation*}
 
    To handle the second term in \eqref{est.I1}, we note that $\Omega(2\e)$ is contained in $\Omega^\delta_\e$, provided that $\delta \in (0,\frac14 c_1)$. Thus $w^\delta_\e = w^*_\e$ and $\phi_\e \approx 1$ in $\Omega(2\e)$. Consequently,
    \begin{equation*}
    \begin{aligned}
        & C\| \nabla u_0\|_{L^2(\Omega(2\e))} \| \nabla w^\delta_\e \|_{L^2(\Omega(2\e))} \\
        & \le C\| \nabla u_0 \|_{L^2(\Omega(2\e))}^2 + \frac18 \| \phi_\e \nabla w_\e^* \|_{L^2(\Omega(2\e))}^2 \\
        & \le C\| \nabla u_0 \|_{L^2(\Omega(2\e))}^2 + \frac14 \| \phi_\e \nabla w_\e \|_{L^2(\Omega_\e)}^2 + \| \phi_\e \nabla (w^*_\e -w_\e) \|_{L^2(\Omega(2\e))}^2.
    \end{aligned}
    \end{equation*}
    Observe that $w_\e^* - w_\e = \e \chi_\ell(x/\e) (\partial_\ell u_0 - \mathscr{K}_\e(\partial_\ell u_0)) \eta_\e$ and
    \begin{equation*}
    \begin{aligned}
        \phi_\e \nabla (w_\e^* - w_\e) & = \phi_\e (\nabla \chi_\ell)_\e (\partial_\ell u_0 - \mathscr{K}_\e(\partial_\ell u_0)) \eta_\e \\    
        & \qquad + \e \phi_\e (\chi_\ell)_\e (\nabla \partial_\ell u_0 - \mathscr{K}_\e(\nabla \partial_\ell u_0)) \eta_\e \\
        & \qquad +  \e \phi_\e\nabla \eta_\e (\chi_\ell)_\e (\partial_\ell u_0 - \mathscr{K}_\e(\partial_\ell u_0)).
    \end{aligned}
    \end{equation*}
    Now, by using the boundedness of $\chi$ and $\phi \nabla \chi$, as well as the properties of the smoothing operator $\mathscr{K}_\e$ (see Lemma \ref{lem.smoothing} and Lemma \ref{lem.smoothing.error}), we see that 
    \begin{equation}\label{est.Dw*-Dw}
        \| \phi_\e \nabla (w^*_\e -w_\e) \|_{L^2(\Omega_\e)} \le C\e \| \nabla^2 u_0 \|_{L^2(\Omega\setminus \Omega(\e/2))}.
    \end{equation}
    As a result of \eqref{est.I1} - \eqref{est.Dw*-Dw}, we obtain
    \begin{equation*}
    \begin{aligned}
        |I_1| & \le C\e \delta^{-1} \| \nabla^2 u_0 \|_{L^2(\Omega\setminus \Omega(\e))} \big( \| \phi_\e \nabla u_\e \|_{L^2(\Omega_\e)} + \| \nabla u_0\|_{L^2(\Omega)} \big) \\
        & \qquad + C\| \nabla u_0 \|_{L^2(\Omega(2\e))}^2 + C\e^2 \| \nabla^2 u_0 \|_{L^2(\Omega\setminus \Omega(\e/2))}^2+  \frac14 \| \phi_\e \nabla w_\e \|_{L^2(\Omega_\e)}^2.
    \end{aligned}
    \end{equation*}

    \textbf{Estimate of $I_2$:} It follows from \eqref{est.Dw*-Dw} and the Cauchy-Schwarz inequality that
    \begin{equation*}
        |I_2| \le \frac14 \| \phi_\e \nabla w_\e \|_{L^2(\Omega_\e)}^2 + C\e^2 \| \nabla^2 u_0 \|_{L^2(\Omega\setminus \Omega(\e/2))}^2.
    \end{equation*}

    \textbf{Estimate of $I_3$:} Again, it follows from the Cauchy-Schwarz inequality that
    \begin{equation}\label{est.I3}
        |I_3| \le \frac14 \| \phi_\e \nabla w_\e \|_{L^2(\Omega_\e)}^2 + 4 \| \phi_\e \nabla (w^*_\e -w_\e^\delta) \|_{L^2(\Omega_\e)}^2.
    \end{equation}
    Recall that $w_\e^* = w_\e^\delta$ in $\Omega_\e^\delta$. Thus $\phi_\e \nabla (w^*_\e -w_\e^\delta)$ is supported in $\Omega_\e \setminus \Omega_\e^\delta$. Moreover,
    \begin{equation}\label{est.I3-2}
    \begin{aligned}
        \phi_\e \nabla (w^*_\e -w_\e^\delta) & = \phi_\e \nabla u_\e - \phi_\e \nabla \tilde{u}_\e \\
        & \qquad - \phi_\e \nabla (\e \chi_\ell(x/\e) \mathscr{K}_\e(\partial_\ell u_0) \eta_\e ) + \phi_\e \nabla (\e \tilde{\chi}_\ell(x/\e) \mathscr{K}_\e(\partial_\ell u_0) \eta_\e ).
    \end{aligned}
    \end{equation}
    We estimate the last term of the above identity over $\Omega_\e \setminus \Omega_\e^\delta$. Note that $\phi_\e \le C\delta$ and $\eta_\e = 1$ in $\Omega_\e \setminus \Omega_\e^\delta$. It follows from the triangle inequality and the boundedness of $\tilde{\chi}$,
    \begin{equation*}
    \begin{aligned}
        & \int_{\Omega_\e \setminus \Omega_\e^\delta} \phi_\e^2 |\nabla (\e \tilde{\chi}_\ell(x/\e) \mathscr{K}_\e(\partial_\ell u_0) \eta_\e )|^2 \\
        & \le C\delta^2 \e^2 \int_{\Omega_\e \setminus \Omega_\e^\delta} |\mathscr{K}_\e(\nabla^2 u_0)|^2  + C\delta^2 \int_{\Omega_\e \setminus \Omega_\e^\delta} |\nabla \tilde{\chi}(x/\e)|^2 |\mathscr{K}_\e(\nabla u_0)|^2 \\
        & \le C\delta^2 \e^2 \int_{\Omega_\e \setminus \Omega(\e)} |\nabla^2 u_0|^2 + C\delta^2 \| \nabla \tilde{\chi} \|_{L^2(Y_*\setminus Y^\delta_*)}^2 \int_{\Omega} |\nabla u_0|^2,
    \end{aligned}
    \end{equation*}
    where we have used Lemma \ref{lem.smoothing} in the last inequality. By the $L^2$ regularity estimate of the nontangential maximal function for the harmonic function $\tilde{\chi}$ in the Lipschitz holes $T^\delta = \{ x\in Y: \dist(x,T)<\delta \}$, we have (see Lemma \ref{lem.nontangential.L2} in Appendix)
    \begin{equation*}
        \| \nabla \tilde{\chi} \|_{L^2(Y_*\setminus Y^\delta_*)}^2  \le C\delta \| \nabla_{\rm \tan} \chi \|_{L^2(\partial T^\delta)}^2 \le C\delta^{-1}. 
    \end{equation*}
    Hence,
    \begin{equation*}
        \int_{\Omega_\e \setminus \Omega_\e^\delta} \phi_\e^2 |\nabla (\e \tilde{\chi}_\ell(x/\e) \mathscr{K}_\e(\partial_\ell u_0) \eta_\e )|^2 \le C\delta^2 \e^2 \int_{\Omega_\e \setminus \Omega(\e)} |\nabla^2 u_0|^2 + C\delta \int_{\Omega} |\nabla u_0|^2.
    \end{equation*}
    Similarly (and easier), using the boundedness of $\chi$ and $\phi \nabla \chi$, we have
    \begin{equation*}
        \int_{\Omega_\e \setminus \Omega_\e^\delta} \phi_\e^2 |\nabla (\e {\chi}_\ell(x/\e) \mathscr{K}_\e(\partial_\ell u_0) \eta_\e )|^2 \le C\delta^2 \e^2 \int_{\Omega_\e \setminus \Omega(\e)} |\nabla^2 u_0|^2 + C\delta \int_{\Omega} |\nabla u_0|^2.
    \end{equation*}
    Combining these with \eqref{est.I3} and \eqref{est.I3-2}, we obtain
    \begin{equation*}
    \begin{aligned}
        |I_3| & \le \frac14 \| \phi_\e \nabla w_\e \|_{L^2(\Omega_\e)}^2 + C\int_{\Omega_\e \setminus \Omega_\e^\delta} \phi_\e^2 (|\nabla u_\e|^2 + |\nabla\tilde{u}_\e|^2) \\
        & \qquad + C \e^2 \|\nabla^2 u_0\|_{L^2(\Omega_\e \setminus \Omega(\e))}^2 + C\delta \| \nabla u_0\|_{L^2(\Omega)}^2.
    \end{aligned}
    \end{equation*}

    Finally, the desired estimate follows from \eqref{eq.Dwe.I123} and the estimates of $I_1, I_2$ and $I_3$.
    \end{proof}

\subsection{Convergence rates}

In general, if $\Omega$ is a Lipschitz domain and $g\in H^1(\partial \Omega)$, we do not have $u_0\in H^2(\Omega)$, though we still have $u_0 \in H^2_{\rm loc}(\Omega)$ due to the interior $H^2$ estimate. However, we have the following estimates proved in \cite{Shen17} (a sketch of the proof is also given in Appendix).
\begin{lemma}\label{lem.u0inLip}
    Let $\Omega$ be a Lipschitz domain and let $u_0$ be the weak solution of \eqref{eq.u00} with $F = F_\e\in L^2(\Omega)$ and $g\in H^1(\partial \Omega)$. Then
    \begin{equation}\label{est.u0layer-2}
        \| \nabla^2 u_0 \|_{L^2(\Omega \setminus \Omega(c\e))} \le C\e^{-\frac12} \big( \| F \|_{L^2(\Omega)} + \| g \|_{H^1(\partial \Omega)} \big),
    \end{equation}
    and
    \begin{equation}\label{est.u0layer-1}
        \| \nabla u_0 \|_{L^2( \Omega(2\e))} \le C\e^{\frac12} \big( \| F \|_{L^2(\Omega)} + \| g \|_{H^1(\partial \Omega)} \big).
    \end{equation}
\end{lemma}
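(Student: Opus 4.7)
The plan is to deduce both estimates from the classical nontangential maximal function theory for the constant-coefficient operator $\mathcal{L}_0=-\text{\rm div}(\widehat{A}\nabla)$ on Lipschitz domains. Since $\widehat{A}$ is symmetric and positive definite, the $L^2$ regularity problem for $\mathcal{L}_0$ in $\Omega$ is well-posed, and together with the $L^2$ bound from $F$ one obtains
\begin{equation*}
\|N(\nabla u_0)\|_{L^2(\partial\Omega)}\le C\bigl(\|F\|_{L^2(\Omega)}+\|g\|_{H^1(\partial\Omega)}\bigr),
\end{equation*}
where $N$ denotes the nontangential maximal function with respect to a fixed family of cones $\Gamma(Q)\subset\Omega$, $Q\in\partial\Omega$. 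Taking this bound as the starting point, the two desired estimates will follow from a layer integration and a Whitney decomposition respectively.

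For \eqref{est.u0layer-1}, I would cover $\Omega(2\e)$ by finitely many bi-Lipschitz images of tubular neighborhoods of $\partial\Omega$; each $x\in\Omega(2\e)$ can be written, up to a fixed bi-Lipschitz change of variables, as $x=Q+t\nu$ with $Q\in\partial\Omega$ and $0<t\lesssim\e$, in such a way that $x\in\Gamma(Q)$. Hence $|\nabla u_0(x)|\le N(\nabla u_0)(Q)$ and Fubini yields
\begin{equation*}
\int_{\Omega(2\e)}|\nabla u_0|^2\,dx\le C\e\int_{\partial\Omega}|N(\nabla u_0)|^2\,d\sigma\le C\e\bigl(\|F\|_{L^2}^2+\|g\|_{H^1(\partial\Omega)}^2\bigr),
\end{equation*}
which is \eqref{est.u0layer-1} after taking square roots.

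For \eqref{est.u0layer-2} I would perform a Whitney-type decomposition of $\Omega\setminus\Omega(c\e)$ into essentially disjoint cubes $\{Q_j\}$ with $\ell_j:=\text{diam}(Q_j)\approx\dist(Q_j,\partial\Omega)\ge c\e/2$ and with bounded overlap of the dilates $2Q_j\subset\Omega$. On each $2Q_j$ the interior Caccioppoli-type $H^2$ estimate for the constant-coefficient equation $\mathcal{L}_0 u_0=F$ gives
\begin{equation*}
\int_{Q_j}|\nabla^2 u_0|^2\le C\ell_j^{-2}\int_{2Q_j}|\nabla u_0|^2+C\int_{2Q_j}|F|^2.
\end{equation*}
Grouping the Whitney cubes by dyadic scale $\ell_j\approx 2^k\e$ ($k\ge 0$) and using bounded overlap, the gradient contribution at scale $k$ is controlled by $(2^k\e)^{-2}$ times $\int_{\{2^{k-1}\e<\dist(\cdot,\partial\Omega)<2^{k+2}\e\}}|\nabla u_0|^2$, and the layer argument from the previous paragraph bounds this dyadic-annulus integral by $C\,2^k\e\,\|N(\nabla u_0)\|_{L^2(\partial\Omega)}^2$. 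Thus the $k$-th contribution is of order $(2^k\e)^{-1}\|N(\nabla u_0)\|_{L^2(\partial\Omega)}^2$, and summing the geometric series over $k\ge 0$ produces the total bound $C\e^{-1}(\|F\|_{L^2}^2+\|g\|_{H^1(\partial\Omega)}^2)$. Taking square roots gives \eqref{est.u0layer-2}.

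The main obstacle is the nontangential maximal function estimate itself, which on a general Lipschitz $\Omega$ is not elementary: it rests on the solvability of the $L^2$ regularity problem for second-order constant-coefficient operators on Lipschitz domains, a result obtained via singular-integral/layer-potential methods (Jerison--Kenig, Dahlberg--Kenig--Verchota, and the form used here is in \cite{Shen17}). Once this is granted, the remaining steps are a bi-Lipschitz change of variables near $\partial\Omega$ and a standard Whitney-decomposition bookkeeping, both of which are routine.
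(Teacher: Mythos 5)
Your overall architecture (a nontangential maximal function bound, then a thin-layer Fubini argument for \eqref{est.u0layer-1}, then an interior estimate plus dyadic/Whitney summation for \eqref{est.u0layer-2}) is essentially the paper's, but your very first step has a genuine gap: you assert
$\| N(\nabla u_0)\|_{L^2(\partial\Omega)} \le C\big(\|F\|_{L^2(\Omega)}+\|g\|_{H^1(\partial\Omega)}\big)$
for the solution of the \emph{inhomogeneous} problem \eqref{eq.u00}. The $L^2$ regularity theory on Lipschitz domains (Jerison--Kenig; see \eqref{est.L2reg} in the Appendix) controls the nontangential maximal function of the gradient only for solutions of the homogeneous equation $\cL_0 w=0$. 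With merely $F\in L^2(\Omega)$, the inhomogeneous part of $u_0$ is only an $H^2$ function, whose gradient lies in $H^1$ and admits no pointwise (cone-supremum) control in terms of $\|F\|_{L^2}$; interior estimates for the inhomogeneous equation would require $F$ in some $L^p$ with $p>d$ even locally. So $N(\nabla u_0)$ need not be finite, let alone lie in $L^2(\partial\Omega)$ with the stated bound, and the phrase ``together with the $L^2$ bound from $F$'' does not supply the missing argument.

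The repair is exactly the paper's splitting, after which your two bookkeeping steps go through. Take $v\in H^2(\R^d)$ with $\cL_0 v=F\mathbbm{1}_\Omega$ and $\|v\|_{H^2(\R^d)}\le C\|F\|_{L^2(\Omega)}$, and set $w=u_0-v$, which solves $\cL_0 w=0$ in $\Omega$ with boundary data $g-v\in H^1(\partial\Omega)$; then \eqref{est.L2reg} (Lemma \ref{lem.nontangential.L2}) applies to $w$ and gives $\|(\nabla w)^*\|_{L^2(\partial\Omega)}\le C(\|g\|_{H^1(\partial\Omega)}+\|F\|_{L^2(\Omega)})$. Your layer/Fubini step and the dyadic summation (equivalently the paper's co-area computation $\int_{c\e}^{\mathrm{diam}(\Omega)}t^{-2}\,dt\le C\e^{-1}$) should then be run for $w$, not $u_0$, while $v$ is handled directly: $\|\nabla v\|_{L^2(\Omega(2\e))}\le C\e^{1/2}\|v\|_{H^2(\R^d)}$ by a trace-type layer estimate, and $\|\nabla^2 v\|_{L^2(\Omega)}\le C\|F\|_{L^2(\Omega)}\le C\e^{-1/2}\|F\|_{L^2(\Omega)}$ trivially. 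With that modification your Whitney argument (whose local $F$-terms sum to $C\|F\|_{L^2}^2\le C\e^{-1}\|F\|_{L^2}^2$) is a legitimate alternative to the paper's level-set/co-area bookkeeping, but as written the proposal rests on an unjustified (indeed generally false) nontangential bound for the inhomogeneous solution.
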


The above lemma indicates that the right-hand side of \eqref{est.FullRate} is small by choosing $\delta$ appropriately small, except for the last integral. The smallness of the last integral follows from the small-scale higher integrability of $\phi \nabla u_\e$ and $\phi_\e \nabla \tilde{u}_\e$. In particular, the small-scale Lipschitz estimate proved earlier ensures the smallness of the last integral and therefore a (suboptimal) convergence rate.

    \begin{lemma}\label{lem.deltaLayer}
    Let $\Omega$ be a Lipschitz domain. Let $f\in L^p(\Omega_\e)$ for some $p >d$ and $u_\e$ a solution of \eqref{eq.ue}. Then
    \begin{equation*}
        \int_{\Omega_\e \setminus \Omega^\delta_\e} \phi_\e^2 |\nabla u_\e|^2 \le C\delta \big( \| \phi_\e \nabla u_\e \|_{L^2(\Omega_\e)}^2 + \e^{2-\frac{2d}{p}} \| f \|_{L^p(\Omega_\e)}^2\big).
    \end{equation*}
\end{lemma}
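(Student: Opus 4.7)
The plan is to reduce the integral over the thin layer to the small-scale boundary Lipschitz estimate of Remark \ref{rmk.Lip.T'}. Observe that $\Omega_\e \setminus \Omega_\e^\delta$ is exactly the $\delta\e$-neighborhood of $T_\e$ inside $\Omega_\e$, and for $\delta \le c_0/8$ the holes $\e(z+T)$ are well-separated, so this set decomposes as the disjoint union
\begin{equation*}
\Omega_\e \setminus \Omega_\e^\delta = \bigcup_{z\in \Z^d} \e(z+A_\delta) \cap \Omega, \qquad A_\delta := \{y \in Y_*: \dist(y,T) \le \delta\}.
\end{equation*}
Since $\partial T$ is a smooth $(d-1)$-dimensional surface, $|A_\delta| \le C\delta$ for $\delta$ small, and hence $|\e(z+A_\delta)| \le C \e^d \delta$. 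Moreover each piece lies inside $\e(z + T'\setminus T)$, which is the region on which Remark \ref{rmk.Lip.T'} provides a pointwise Lipschitz bound.

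Since $\cL_\e u_\e = \phi_\e f$ with $f \in L^p(\Omega_\e)$ for some $p>d$, Remark \ref{rmk.Lip.T'} (applied with $F=f$) yields
\begin{equation*}
\sup_{\e(z+T'\setminus T)} |\phi_\e \nabla u_\e|^2 \le C \fint_{\e(z+Y_*^+) \cap \Omega} |\phi_\e \nabla u_\e|^2 + C\e^2 \left( \fint_{\e(z+Y_*^+) \cap \Omega} |f|^p \right)^{2/p}.
\end{equation*}
Multiplying by $|\e(z+A_\delta)| \le C\e^d \delta$ and summing over $z$, the gradient term contributes at most
\begin{equation*}
C\delta \sum_z \int_{\e(z+Y_*^+) \cap \Omega} \phi_\e^2 |\nabla u_\e|^2 \le C\delta \|\phi_\e \nabla u_\e\|_{L^2(\Omega_\e)}^2,
\end{equation*}
by the finite overlap of the enlarged cells $\e(z+Y_*^+)$.

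For the $f$-term, the crude pointwise bound
\begin{equation*}
\left( \fint_{\e(z+Y_*^+) \cap \Omega} |f|^p \right)^{2/p} \le C\e^{-2d/p} \|f\|_{L^p(\Omega_\e)}^2
\end{equation*}
together with the fact that only $O(\e^{-d})$ cells meet $\Omega$ gives
\begin{equation*}
\delta \e^{d+2} \sum_z \left( \fint_{\e(z+Y_*^+) \cap \Omega} |f|^p \right)^{2/p} \le C\delta \e^{2 - 2d/p} \|f\|_{L^p(\Omega_\e)}^2,
\end{equation*}
which is precisely the stated second term. The only substantive point is that the layer $\e(z+A_\delta)$ must be contained in the region where the small-scale Lipschitz bound is valid, which is guaranteed by $\delta \le c_0/8$; the rest of the argument is a direct combination of that pointwise bound, the volume estimate $|A_\delta| \lesssim \delta$, and finite overlap. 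No further averaging trick (e.g., a Hölder rearrangement in the cell sum) is needed for the stated power of $\e$.
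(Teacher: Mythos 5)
Your proof is correct and follows essentially the same route as the paper: decompose the layer $\Omega_\e\setminus\Omega_\e^\delta$ into the per-cell pieces of measure $\lesssim \e^d\delta$, apply the small-scale Lipschitz bound of Theorem \ref{lem.Local Linfinity}/Remark \ref{rmk.Lip.T'} with $F=f$, and sum using finite overlap for the gradient term and the crude per-cell bound $\big(\fint |f|^p\big)^{2/p}\le C\e^{-2d/p}\|f\|_{L^p(\Omega_\e)}^2$ together with the $O(\e^{-d})$ cell count for the $f$-term. This matches the paper's argument step for step, including the resulting power $\e^{2-2d/p}$.
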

\begin{proof}
    This lemma essentially relies only on the small-scale interior Lipschitz estimate, i.e., Theorem \ref{lem.Local Linfinity} and Remark \ref{rmk.Lip.T'}. Indeed, for $\delta<c_0/8$,
    \begin{equation*}
        \begin{aligned}
            \int_{\Omega_\e \setminus \Omega^\delta_\e} \phi_\e^2 |\nabla u_\e|^2 & = \sum_{\e(z+T) \subset \Omega } \int_{\e(z+T^\delta \setminus T)} \phi_\e^2 |\nabla u_\e|^2 \\
            & \le \sum_{\e(z+T)\subset \Omega} C\e^d \delta \sup_{\e(z+T^\delta \setminus T)} |\phi_\e \nabla u_\e|^2 \\
            & \le \sum_{\e(z+T)\subset \Omega } C\e^d \delta \bigg\{ \fint_{\Omega_\e \cap \e(z+Y_*^+)} |\phi_\e \nabla u_\e|^2 + \e^{2} \bigg( \fint_{\Omega_\e \cap \e(z+Y_*^+)} |f|^p \bigg)^{2/p} \bigg\} \\
            & \le \sum_{\e(z+T)\subset \Omega } C\delta \bigg\{ \int_{\Omega_\e \cap \e(z+Y_*^+)} |\phi_\e \nabla u_\e|^2 + \e^d \e^{2-\frac{2d}{p}} \| f \|_{L^p(\Omega_\e)}^2  \bigg\} \\
            & \le C\delta \big( \| \phi_\e \nabla u_\e \|_{L^2(\Omega_\e)}^2 + \e^{2-\frac{2d}{p}} \| f \|_{L^p(\Omega_\e)}^2\big).
        \end{aligned}
    \end{equation*}
    The proof is complete.
\end{proof}

\begin{lemma}\label{lem.deltaLayer-2}
     Let $u_\e$ be the same as in Lemma \ref{lem.deltaLayer} and
     $\tilde{u}_\e$  the harmonic extension of $u_\e$ from $\Omega_\e^\delta$ to $\Omega_\e$. Then for $\delta<c_0/8$,
     \begin{equation*}
         \int_{\Omega_\e \setminus \Omega^\delta_\e} \phi_\e^2 |\nabla \tilde{u}_\e|^2 \le C\delta \big( \| \phi_\e \nabla u_\e \|_{L^2(\Omega_\e)}^2 + \e^{2-\frac{2d}{p}} \| f\|_{L^p(\Omega_\e)}^2 \big).
     \end{equation*}
\end{lemma}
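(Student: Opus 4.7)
The plan is to mimic the cell-by-cell decomposition used for Lemma \ref{lem.deltaLayer}, but to replace the direct small-scale Lipschitz bound on $\nabla u_\e$ inside the collar (which is not available for $\tilde u_\e$ since $\tilde u_\e$ does not solve the original equation there) by an estimate that exploits the fact that $\tilde u_\e$ is harmonic in each enlarged hole $\e(z+T^\delta)$. Concretely, since $\partial\Omega$ does not meet the holes, every cell $\e(z+T)\subset\Omega$ intersecting $\Omega_\e\setminus\Omega_\e^\delta$ is an interior cell, and on this cell
\begin{equation*}
(\Omega_\e\setminus\Omega_\e^\delta)\cap\e(z+Y)=\e(z+T^\delta\setminus T),
\end{equation*}
with $\tilde u_\e$ harmonic in $\e(z+T^\delta)$ and equal to $u_\e$ on $\partial\e(z+T^\delta)$.

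The first key step is the nontangential maximal function bound already used in the proof of Lemma \ref{lem.Dwet} for $\tilde\chi$: applying Lemma \ref{lem.nontangential.L2} to the harmonic function $\tilde u_\e$ on the Lipschitz domain $\e(z+T^\delta)$ and rescaling gives
\begin{equation*}
\int_{\e(z+T^\delta\setminus T)}|\nabla\tilde u_\e|^2\le C\delta\e\,\|\nabla_{\tan}u_\e\|_{L^2(\partial\e(z+T^\delta))}^2.
\end{equation*}
The geometric input is that $T$ is smooth, so the enlarged holes $\e(z+T^\delta)$ are uniformly Lipschitz in $\delta$ small, and the inner collar $T^\delta\setminus T$ lies entirely within distance $\delta$ of $\partial T^\delta$.

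Next, since $\phi_\e\approx\delta$ on $\partial\e(z+T^\delta)$, we have the pointwise bound $|\nabla u_\e|\le C\delta^{-1}|\phi_\e\nabla u_\e|$ there, and by the small-scale Lipschitz estimate (Remark \ref{rmk.Lip.T'}, valid since $T^\delta\subset T'$ for $\delta$ small), $|\phi_\e\nabla u_\e|^2$ is controlled on $\e(z+T'\setminus T)\supset\partial\e(z+T^\delta)$ by the usual averages of $|\phi_\e\nabla u_\e|^2$ and $|f|^p$ over $\e(z+Y_*^+)\cap\Omega$. Since $\partial\e(z+T^\delta)$ has surface measure $\sim\e^{d-1}$,
\begin{equation*}
\|\nabla_{\tan}u_\e\|_{L^2(\partial\e(z+T^\delta))}^2\le C\e^{d-1}\delta^{-2}\bigg(\fint_{\e(z+Y_*^+)\cap\Omega}|\phi_\e\nabla u_\e|^2+\e^2\Big(\fint_{\e(z+Y_*^+)\cap\Omega}|f|^p\Big)^{2/p}\bigg).
\end{equation*}
Combining this with the previous display and using $\phi_\e\le C\delta$ on the collar yields the per-cell estimate
\begin{equation*}
\int_{\e(z+T^\delta\setminus T)}\phi_\e^2|\nabla\tilde u_\e|^2\le C\e^d\delta\bigg(\fint_{\e(z+Y_*^+)\cap\Omega}|\phi_\e\nabla u_\e|^2+\e^2\Big(\fint_{\e(z+Y_*^+)\cap\Omega}|f|^p\Big)^{2/p}\bigg),
\end{equation*}
the gain of a factor $\delta^2$ from $\phi_\e^2$ exactly offsetting the $\delta^{-1}$ loss from the nontangential bound. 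Summing over $z$ as at the end of the proof of Lemma \ref{lem.deltaLayer} then produces the asserted estimate.

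The step I expect to be most delicate is the careful bookkeeping of the $\e$ and $\delta$ powers when rescaling the nontangential maximal function bound, since the thickness of the inner collar ($\sim\delta\e$), the surface measure of $\partial\e(z+T^\delta)$ ($\sim\e^{d-1}$), and the bound $\phi_\e\lesssim\delta$ all contribute, and one also has to verify that the Lipschitz character of $T^\delta$ is uniform in $\delta$ so that the constant in Lemma \ref{lem.nontangential.L2} is $\delta$-independent; this latter point is exactly where the smoothness of $T$ is used.
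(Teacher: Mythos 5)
Your proposal is correct and follows essentially the same route as the paper: decompose the collar $\Omega_\e\setminus\Omega_\e^\delta$ cell by cell, use the $L^2$ nontangential maximal function estimate (Lemma \ref{lem.nontangential.L2}, equivalently \eqref{est.L2reg} plus the collar-volume bound) for the harmonic extension on each enlarged hole $\e(z+T^\delta)$, control the boundary data via $\phi_\e\approx\delta$ on $\partial\e(z+T^\delta)$ together with the small-scale Lipschitz estimate of Remark \ref{rmk.Lip.T'}, and sum over cells; the $\delta$ and $\e$ bookkeeping you outline ($\phi_\e^2\lesssim\delta^2$, collar thickness $\delta\e$, surface measure $\e^{d-1}$, $\delta^{-2}$ from the boundary data) reproduces the paper's $C\delta^3\e^d\delta^{-2}=C\delta\e^d$ per-cell factor exactly.
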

\begin{proof}
    First, we write
    \begin{equation}\label{est.Dtue.split}
        \int_{\Omega_\e \setminus \Omega^\delta_\e} \phi_\e^2 |\nabla \tilde{u}_\e|^2 = \sum_{\e(z+T ) \subset \Omega} \int_{\e(z+T^\delta \setminus T )} \phi_\e^2 |\nabla \tilde{u}_\e|^2.
    \end{equation}
    We then consider a single cell $\e(z+T^\delta \setminus T) \subset \Omega$. By Theorem \ref{lem.Local Linfinity} and Remark \ref{rmk.Lip.T'},
    \begin{equation}\label{est.Due.Linfty}
        \| \nabla u_\e \|_{L^\infty(\e(z+Y_* \setminus T^\delta))} \le C\delta^{-1}\bigg( \fint_{\e(z+ Y^+_*) \cap \Omega_\e} |\phi_\e \nabla u_\e|^2 \bigg)^{1/2} +C\e \delta^{-1} \bigg( \fint_{\e(z+ Y_*) \cap \Omega_\e} |f|^p \bigg)^{1/p}.
    \end{equation}
    Recall that $\Delta \tilde{u}_\e = 0$ in $\e(z + T^\delta)$ and $\tilde{u}_\e = u_\e$ on $\e(z + \partial T^\delta)$. Also, $T^\delta = \cup_{i} \tau_i^\delta$ is a union of pairwise disjoint Lipschitz holes with connected boundaries. 
    In each hole, we can apply the $L^2$ regularity estimate of the nontangential estimate (see \eqref{est.L2reg} in Appendix) to obtain 
    \begin{equation*}
    \begin{aligned}
        \| (\nabla \tilde{u}_\e)^* \|_{L^2(\e(z+\partial T^\delta))} &\le C \| \nabla_{\rm tan} \tilde{u}_\e \|_{L^2(\e(z+\partial T^\delta))} \\
        & \le C \| \nabla {u}_\e \|_{L^2(\e(z+\partial T^\delta))} \le C \e^{\frac{d-1}{2}} \| \nabla u_\e \|_{L^\infty(\e(z+Y_* \setminus T^\delta))}.
    \end{aligned}
    \end{equation*}
    Consequently,
    \begin{equation}\label{est.Tdelta.Dtue}
        \int_{\e(z+ T^\delta\setminus T)} \phi_\e^2 |\nabla \tilde{u}_\e|^2 \le 
        C\delta^3 \e \int_{\e(z+\partial T^\delta)} |(\nabla \tilde{u}_\e)^* |^2 \le C\delta^3 \e^d \| \nabla u_\e \|_{L^\infty(\e(z+Y_* \setminus T^\delta))}^2.
    \end{equation}
    Combining \eqref{est.Due.Linfty} and \eqref{est.Tdelta.Dtue}, we see that 
    \begin{equation*}
        \int_{\e(z+ T^\delta\setminus T)} \phi_\e^2 |\nabla \tilde{u}_\e|^2 \le C\delta \int_{\e(z+ Y^+_*) \cap \Omega_\e} |\phi_\e \nabla u_\e|^2 + C\delta \e^d \e^{2-\frac{2d}{p}} \| f \|_{L^p(\Omega_\e)}^2.
    \end{equation*}
    Summing over $z$, we obtain
    \begin{equation*}
        \int_{\Omega_\e \setminus \Omega^\delta_\e} \phi_\e^2 |\nabla \tilde{u}_\e|^2 \le C\delta \big( \| \phi_\e \nabla u_\e \|_{L^2(\Omega_\e)}^2 + \e^{2-\frac{2d}{p}} \| f\|_{L^p(\Omega_\e)}^2 \big),
    \end{equation*}
    as desired.
\end{proof}

\begin{theorem}\label{thm.L2rate}
    Let $\Omega$ be a Lipschitz domain, $f\in L^p(\Omega_\e)$ for some $p>d$ and $g\in H^1(\partial \Omega)$. Then
    \begin{equation*}
        \int_{\Omega_\e} \phi_\e^2 |\nabla w_\e|^2 \le C\e^{\frac14} \big( \|f \|_{L^p(\Omega_\e)}^2 + \| g \|_{H^1(\partial \Omega)}^2 \big).
    \end{equation*}
\end{theorem}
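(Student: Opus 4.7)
The strategy is purely to combine the deterministic estimate from Lemma \ref{lem.Dwet} with the boundary-layer bounds from Lemma \ref{lem.u0inLip} and the weighted cell-by-cell bounds from Lemmas \ref{lem.deltaLayer} and \ref{lem.deltaLayer-2}, and then optimize the free parameter $\delta$. First I would note that, since $F_\e$ is the zero-extension of $\phi_\e f$ from $\Omega_\e$ to $\Omega$, and $\phi \le C$, H\"older's inequality (using boundedness of $\Omega$ and $p>d\ge 2$) gives $\|F_\e\|_{L^2(\Omega)} \le C \|f\|_{L^p(\Omega_\e)}$. The energy estimate \eqref{est.energy}, together with the standard reduction of boundary data $g\in H^1(\partial\Omega)$ to a suitable $H^1(\Omega)$-extension, yields
\begin{equation*}
\| \phi_\e \nabla u_\e\|_{L^2(\Omega_\e)} + \|\nabla u_0\|_{L^2(\Omega)} \le C M, \quad \text{where } M := \| f\|_{L^p(\Omega_\e)} + \|g\|_{H^1(\partial\Omega)}.
\end{equation*}

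Next, I would insert these bounds into Lemma \ref{lem.Dwet}. Using Lemma \ref{lem.u0inLip}, the first term on the right-hand side of \eqref{est.FullRate} becomes
\begin{equation*}
C\e\delta^{-1}\|\nabla^2 u_0\|_{L^2(\Omega\setminus\Omega(\e))}\bigl(\|\phi_\e\nabla u_\e\|_{L^2(\Omega_\e)}+\|\nabla u_0\|_{L^2(\Omega)}\bigr) \le C\e^{1/2}\delta^{-1} M^2.
\end{equation*}
Analogously, $\|\nabla u_0\|_{L^2(\Omega(2\e))}^2 \le C\e M^2$ and $\e^2\|\nabla^2 u_0\|_{L^2(\Omega\setminus\Omega(\e/2))}^2 \le C\e M^2$, while $C\delta\|\nabla u_0\|_{L^2(\Omega)}^2 \le C\delta M^2$. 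Finally, Lemmas \ref{lem.deltaLayer} and \ref{lem.deltaLayer-2} give
\begin{equation*}
\int_{\Omega_\e\setminus\Omega_\e^\delta}\phi_\e^2\bigl(|\nabla u_\e|^2+|\nabla \tilde u_\e|^2\bigr) \le C\delta\bigl(\|\phi_\e\nabla u_\e\|_{L^2(\Omega_\e)}^2 + \e^{2-2d/p}\|f\|_{L^p(\Omega_\e)}^2\bigr) \le C\delta M^2,
\end{equation*}
where I used $p>d$ and $\e<1$ to absorb $\e^{2-2d/p}$.

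Collecting everything yields
\begin{equation*}
\int_{\Omega_\e}\phi_\e^2|\nabla w_\e|^2 \le C\bigl(\e^{1/2}\delta^{-1}+\e+\delta\bigr)M^2.
\end{equation*}
The two binding terms are $\e^{1/2}\delta^{-1}$ and $\delta$, balanced by the choice $\delta=\e^{1/4}$ (which lies in the admissible range $(0,c_1]$ for $\e$ sufficiently small, and for $\e\gtrsim 1$ the claim follows trivially from the energy estimate). This produces the advertised rate $C\e^{1/4}M^2$. The honest remark is that this step is essentially bookkeeping: all the analytical difficulty—the two-scale expansion with cutoff, the harmonic extension across the holes, the skew-symmetric flux correctors, and the Lipschitz-domain bound for $u_0$—has been absorbed into the preceding lemmas. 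The rate $\e^{1/4}$ is therefore suboptimal (a more refined argument tracking the layer contributions sharply would likely give $\e^{1/2}$), but it is perfectly adequate for driving the excess-decay iteration in the large-scale Lipschitz estimate that follows.
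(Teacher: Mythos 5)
Your proposal is correct and follows essentially the same route as the paper: plug the energy estimate and Lemma \ref{lem.u0inLip} into Lemma \ref{lem.Dwet}, bound the layer integral via Lemmas \ref{lem.deltaLayer} and \ref{lem.deltaLayer-2}, and optimize with $\delta=\e^{1/4}$ to get the $(\e^{1/2}\delta^{-1}+\e+\delta)$ balance. The auxiliary observations (zero-extension bound for $F_\e$, $H^1(\Omega)$-lifting of $g$, admissibility of $\delta$) are consistent with what the paper implicitly uses.
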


\begin{proof}
    First, by the energy estimate \eqref{est.energy} and Lemma \ref{lem.u0inLip}, we have
    \begin{equation*}
        \e^{\frac12} \| \nabla^2 u_0 \|_{L^2(\Omega(c\e))} + \e^{-\frac12} \| \nabla u_0 \|_{L^2(\Omega(c\e))} + \| u_\e \|_{H^1_{\phi_\e}(\Omega_\e)} \le C\big( \| f\|_{L^2(\Omega_\e)} + \| g \|_{H^1(\partial \Omega)} \big).
    \end{equation*}
    Hence, Lemma \ref{lem.Dwet} implies
    \begin{equation*}
    \begin{aligned}
        \int_{\Omega_\e} \phi_\e^2 |\nabla w_\e|^2 & \le C(\e^{\frac12} \delta^{-1} + \e + \delta ) \big( \|f \|_{L^p(\Omega_\e)}^2 + \| g \|_{H^1(\partial \Omega)}^2 \big) + C \int_{\Omega_\e \setminus \Omega^\delta_\e} \phi_\e^2 \big( |\nabla u_\e|^2 + |\nabla \tilde{u}_\e|^2 \big)\\
        & \le C(\e^{\frac12} \delta^{-1} + \e + \delta ) \big( \|f \|_{L^p(\Omega_\e)}^2 + \| g \|_{H^1(\partial \Omega)}^2 \big),
    \end{aligned}
    \end{equation*}
    where we have used Lemma \ref{lem.deltaLayer} and Lemma \ref{lem.deltaLayer-2} in the second inequality. Since $\delta$ is arbitrary, we pick $\delta = \e^{\frac14}$ to obtain the desired estimate.
\end{proof}

The following local convergence rate is particularly useful for us. Let $Q_r$ be a cube with side length $r$ and $Q_r^\e = Q_r \setminus T_\e$.

    \begin{theorem}\label{thm.local.rate}
        Let $u_\e\in H^1_{\phi_\e}(Q_{2r}^\e)$ be the weak solution of $\cL_\e (u_\e) = \phi_\e f$ in $Q_{2r}^\e$ with $r = m\e \ge \e$. There exists a weak solution $u_0\in H^1(Q_r)$ of the homogenized equation $\cL_0(u_0) = \phi_\e f$ in $Q_{\frac43 r}$ such that $w_\e$ satisfies
        \begin{equation}\label{G-rate}
            \fint_{Q_{r}^\e } \phi_\e^2 |\nabla w_\e|^2 \le C\Big( \frac{\e}{r} \Big)^{\frac14} \bigg\{ \fint_{Q_{2r}^\e } \phi_\e^2 |\nabla u_\e|^2 +   r^2 \bigg( \fint_{Q_{2r}^\e} |f|^p \bigg)^{2/p} \bigg\},
        \end{equation}
        where $w_\e = u_\e - u_0 - \e\chi(x/\e) \cdot \nabla u_0$ in $Q_r^\e$. Moreover,
        \begin{equation}\label{est.localL2rate}
            \fint_{Q_{r}^\e } \phi_\e^2 |u_\e - u_0 |^2 \le C\Big( \frac{\e}{r} \Big)^{\frac14} \bigg\{ \fint_{Q_{2r}^\e } \phi_\e^2 |u_\e|^2 +   r \bigg( \fint_{Q_{2r}^\e} |f|^p \bigg)^{2/p} \bigg\},
        \end{equation}        
        The estimates still hold if $Q_{2r}^\e$ is replaced by $Q_{2r}^\e(x_0) \cap \Omega_\e$ with $x_0\in \partial \Omega$ and $u_\e = 0$ on $\partial \Omega \cap Q_{2r}^\e(x_0)$.
    \end{theorem}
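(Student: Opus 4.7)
The plan is to reduce the local statement to the global rate of Theorem \ref{thm.L2rate} applied on a carefully chosen intermediate cube $Q_\rho$ with $1<\rho<4/3$, after a scaling argument.

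\emph{Step 1 (Rescaling).} First I would translate $Q_{2r}$ to be centered at the origin and introduce $v(y) = u_\e(ry)$, $\tilde f(y) = r^2 f(ry)$, $\tilde\e = \e/r \in (0,1]$. Then $v$ solves $\cL_{\tilde\e} v = \phi_{\tilde\e}\tilde f$ on $Q_2^{\tilde\e}$, and tracking the scale factors of each term in \eqref{G-rate}--\eqref{est.localL2rate} reduces the theorem to the case $r=1$, $\tilde\e\in(0,1]$, which I still denote $\e$.

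\emph{Step 2 (Choosing $\rho$ by averaging).} A Fubini/mean-value argument on $\rho\in(1,4/3)$ produces $\rho$ satisfying simultaneously: (i) $\dist(\partial Q_\rho,\partial T_\e)\ge c\e$, so $\phi_\e\approx 1$ in a $c\e$-tube around $\partial Q_\rho$ and the geometric hypothesis \eqref{H} holds for $Q_\rho$ at the $\e$-scale; (ii) the $c\e$-tubular neighborhood of $\partial Q_\rho$ captures at most $C\e$ times the global integrals $\int_{Q_{4/3}^\e}\phi_\e^2|\nabla u_\e|^2$ and $\int_{Q_{4/3}^\e}|f|^p$. Both conditions cut out subsets of positive measure in $(1,4/3)$, because the $\e$-periodic holes are transverse to the planar faces of $\partial Q_\rho$.

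\emph{Step 3 (Trace bound and global rate).} Let $L=\fint_{Q_{4/3}^\e}u_\e$ and $g=(u_\e-L)|_{\partial Q_\rho}$. At each $x\in\partial Q_\rho$, the ball $B(x,c\e)$ is hole-free by (i), so the small-scale Lipschitz estimate (Theorem \ref{lem.Local Linfinity}) gives $|\nabla u_\e(x)|^2\le C\fint_{B(x,c\e)}|\phi_\e\nabla u_\e|^2 + C\e^2(\fint_{B(x,c\e)}|f|^p)^{2/p}$. Integrating over $\partial Q_\rho$ by Fubini (the $\e^{-d}$ from the averages and the $\e^{d-1}$ from the surface measure combine to $\e^{-1}$) and using (ii) to absorb that $\e^{-1}$, I obtain
$$\|g\|_{H^1(\partial Q_\rho)}^2 \;\le\; C\|\phi_\e\nabla u_\e\|_{L^2(Q_{4/3}^\e)}^2 + C\e^{2-2d/p}\|f\|_{L^p(Q_{4/3}^\e)}^2.$$
Define $u_0$ on $Q_\rho$ as the weak solution of $\cL_0u_0=\phi_\e f$ (extended by zero outside $Q_\rho^\e$) with Dirichlet data $g+L$, and extend $u_0$ to $Q_{4/3}$ by solving the constant-coefficient homogenized equation. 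Theorem \ref{thm.L2rate} applied on $Q_\rho$ then yields $\int_{Q_\rho^\e}\phi_\e^2|\nabla w_\e|^2\le C\e^{1/4}(\|f\|_{L^p(Q_\rho^\e)}^2+\|g\|_{H^1(\partial Q_\rho)}^2)$. Since on $Q_1\subset Q_\rho$ the cutoff $\eta_\e$ appearing in \eqref{def.we} equals $1$, the $w_\e$ of Theorem \ref{thm.L2rate} coincides there with the $w_\e$ of our statement, yielding \eqref{G-rate} after normalization.

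\emph{Step 4 ($L^2$ rate and main obstacle).} For \eqref{est.localL2rate}, I would decompose $u_\e-u_0=w_\e+\e\chi(x/\e)\cdot\nabla u_0$: the first piece is handled by combining the already-proven $H^1$ rate with the weighted Poincar\'e inequality (Theorem \ref{thm.unweigthed2}), and the second by the boundedness of $\chi$ together with interior $L^2$ bounds on $\nabla u_0$ (derived from the homogenized equation and the boundary data $g+L$). The hard part is the sharp $H^1(\partial Q_\rho)$-bound on $g$ in Step 3: a naive pointwise Lipschitz estimate alone would contribute a prohibitive $\e^{-d/2}$ factor from surface integration that would destroy the $\e^{1/4}$ rate, and the $\e$-gain from the averaging selection of $\rho$ in (ii) is exactly what cancels this and preserves the rate.
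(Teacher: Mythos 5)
Your proposal follows essentially the same route as the paper's proof: rescale to $r=1$, select by an averaging argument an intermediate cube $Q_\rho$ whose boundary stays at distance at least $c\e$ from the holes (so the geometric assumption \eqref{H} holds and $\phi_\e\approx 1$ near $\partial Q_\rho$), control the trace of $u_\e$ minus a constant in $H^1(\partial Q_\rho)$ by $\|\phi_\e\nabla u_\e\|_{L^2}$, let $u_0$ solve the homogenized Dirichlet problem with that data, and invoke Theorem \ref{thm.L2rate} on $Q_\rho$. The paper does exactly this with a slice $Q_t$, $t\in(1,3/2)$, but gets the trace bound directly from the co-area formula over the good slices (where the weight is comparable to $1$), which controls both $\|u_\e-L\|_{L^2(\partial Q_t)}$ and $\|\nabla u_\e\|_{L^2(\partial Q_t)}$ at once and needs neither Theorem \ref{lem.Local Linfinity} nor your tube-capture condition (ii). Your pointwise-Lipschitz route works, but as written it only bounds the tangential-gradient part of $\|g\|_{H^1(\partial Q_\rho)}$; the $L^2(\partial Q_\rho)$ part of $g=u_\e-L$ needs a companion step (co-area plus the weighted Poincar\'e inequality \eqref{est.u-L.Lp}, or the local $L^\infty$ estimate \eqref{est.local.Qe} applied to $u_\e-L$, with $\phi_\e^2|u_\e-L|^2$ added to the Fubini selection).

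Two slips to repair. First, you take $\rho\in(1,4/3)$ and then propose to ``extend $u_0$ to $Q_{4/3}$ by solving the homogenized equation''; there is no extension that agrees with the $u_0$ already constructed on $Q_\rho$ and is a solution on the larger cube, and the theorem requires $u_0$ to solve $\cL_0(u_0)=\phi_\e f$ in $Q_{4r/3}$. The fix is immediate: run the averaging selection over $\rho\in(4/3,3/2)$ (the good set still has positive measure there), so that $u_0$ is defined on $Q_{4/3}$ from the outset and, in addition, $\dist(Q_1,\partial Q_\rho)\ge 1/6$, which is what actually justifies your claim that the cutoff $\eta_\e$ equals $1$ on $Q_1$ (with $\rho$ allowed arbitrarily close to $1$ that claim can fail). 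Second, in Step 4 the weighted Poincar\'e inequality cannot be applied to $w_\e$ on $Q_1^\e$ directly, since $w_\e$ satisfies no boundary condition there; apply it instead to the cutoff version of $w_\e$ on $Q_\rho^\e$, which vanishes on $\partial Q_\rho$ (Theorem \ref{thm.unweigthed}), and use the Caccioppoli inequality \eqref{est.Caccioppoli} to convert $\|\phi_\e\nabla u_\e\|_{L^2(Q_{4/3}^\e)}$ into the quantity $\|\phi_\e u_\e\|_{L^2(Q_{2}^\e)}$ appearing on the right-hand side of \eqref{est.localL2rate}. With these repairs your argument is correct and coincides in substance with the paper's.
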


    \begin{proof}
    We give the proof for \eqref{G-rate}.
    The inequality \eqref{est.localL2rate} follows from \eqref{G-rate} by using a weighted Poincar\'e's inequality and Caccioppoli's inequality.
    By rescaling, it suffices to show \eqref{G-rate} for the case $r = 1$. Let $t\in (1,\frac32)$ be a suitable side length such that $Q_t^\e$ satisfies the geometric assumption \eqref{H} (in particular $\partial Q_t \subset Q_2^\e$) and
    \begin{equation}\label{est.H1.coarea}
        \| u_\e \|_{H^1(\partial Q_t)} \le C\| u_\e\|_{W^{1,2}_{\phi_\e}(Q^\e_{3/2})}.
    \end{equation}
    The choice of such $t$ is possible due to the assumptions on $T$ and the co-area formula.
    Let $u_0$ be the weak solution of
    \begin{equation}
        \left\{ \begin{aligned}
            \cL_0(u_0)  &= \phi_\e f, \quad \text{in } Q_t^\e,\\
            u_0 &= u_\e,\quad \text{on } \partial Q_t.
        \end{aligned}
        \right.
    \end{equation}
    Taking the estimates in Theorem \ref{thm.local.rate} into Lemma \ref{lem.Dwet} with $\Omega_\e = Q_t^\e$, we obtain
    \begin{equation}
    \begin{aligned}
        \int_{Q_t^\e} \phi_\e^2 |\nabla w_\e|^2 & \le C\e^\frac14 \big( \|f \|_{L^p(Q_2^\e)}^2 + \| u_\e \|_{H^1(\partial Q_t)}^2 \big) \\
        & \le C\e^\frac14 \big( \|f \|_{L^p(Q_2^\e)}^2 + \| u_\e\|_{W^{1,2}_{\phi_\e}(Q^\e_2)}^2 \big).
    \end{aligned}
    \end{equation}
    Here the cutoff function $\eta_\e$ in the original definition of $w_\e$ \eqref{def.we} has been removed by analyzing the boundary layers.
    Finally, notice that we can subtract any constant $L$ from $u_\e$ in the above estimate. Hence, $\| u_\e\|_{W^{1,2}_{\phi_\e}(Q_2^\e)}$ may be replaced by $\| \phi_\e \nabla u_\e\|_{L^2(Q^\e_2)}$ in view of \eqref{est.u-Lq.Dup}. This ends the proof.
\end{proof}

\begin{remark}
    The exponent $\frac14$ in Theorem \ref{thm.L2rate} and Theorem \ref{thm.local.rate} is not optimal . By adjusting the cutoff function $\eta_\e$, we can actually improve it to $\frac13$. If we know a priori that $u_0 \in H^2(\Omega)$ (for instance, $\Omega$ is convex or $C^{1,1}$ and $g = 0$), then it may be improved to $\frac12$. It is an interesting question whether the sharp convergence rate of $O(\e)$ holds under the condition $f\in L^p(\Omega_\e)$ or $\phi_\e \nabla f \in L^p(\Omega)^d$ (even with $g = 0$). Note that if $f$ and $\Omega$ are smooth enough, then the sharp convergence rate is valid by using the maximum principle; see \cite[Chapter III.3]{OSY92}.
\end{remark}

\section{Lipschitz estimates}\label{sec.5}


In Section \ref{sec.3}, we have proved the small-scale Lipschitz estimate.
In this section, we establish the large-scale Lipschitz estimate which leads to the full-scale Lipschitz estimate. Our argument here follows from the scheme of \cite{Shen17} (also see \cite{Shen18}).

We define
\begin{equation}
    H_\e(u;r) = \inf_{M\in \R^{d}, q \in \R} \frac{1}{r}\bigg( \fint_{Q_r^\e} \phi_\e^2 | u -M\cdot x-q|^2 \bigg)^{1/2} + r\bigg( \fint_{Q_r^\e} |f|^p \bigg)^{1/p},
\end{equation}
and
\begin{equation}
    I_\e(u;r) = \inf_{q \in \R} \frac{1}{r} \bigg( \fint_{Q_r^\e} \phi_\e^2 | u -q|^2 \bigg)^{1/2} + r\bigg( \fint_{Q_r^\e} |f|^p \bigg)^{1/p}.
\end{equation}
Recall that the interior Caccioppoli inequality for the weak solution of $\cL_\e(u_\e) = \phi_\e f$ can be written as
\begin{equation}\label{est.Caccioppoli}
    \bigg( \fint_{Q_r^\e} \phi_\e^2 |\nabla u_\e |^2 \bigg)^{1/2} \le CI_\e(u_\e;2r).
\end{equation}

\begin{lemma}\label{lem.Exc.Q2}
    Let $u_\e$ be a weak solution of $\cL_\e(u_\e) = \phi_\e f$ in $Q_2^\e$ with $f\in L^p(Q_2^\e)$ for some $p>d$. Then there exist $\theta \in (0,\frac12)$ and $\e_0>0$ such that if $\e<\e_0$,
    \begin{equation}\label{Est.Exc.1step}
        H_\e (u_\e; \theta ) \le \frac12  H_\e(u_\e; 1) +  C \e^\frac{1}{8} I(u_\e;2).
    \end{equation}
\end{lemma}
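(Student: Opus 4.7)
The plan is to carry out a standard one-step excess-decay argument: compare $u_\e$ with a solution $u_0$ of the constant-coefficient homogenized equation on a slightly smaller cube, then exploit classical Schauder-type regularity of $u_0$ to improve the excess at a smaller scale.

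\textbf{Step 1 (Comparison with the homogenized solution).} Since $\cL_\e$ annihilates constants, both $H_\e(u_\e; r)$ and $I_\e(u_\e; r)$ are invariant under $u_\e \mapsto u_\e - q$, so we may normalize so that
$\big(\fint_{Q_2^\e}\phi_\e^2 |u_\e|^2\big)^{1/2} \le 2\,I_\e(u_\e;2)$.
Applying Theorem \ref{thm.local.rate}, specifically \eqref{est.localL2rate}, with $r = 1$ produces a weak solution $u_0 \in H^1(Q_{4/3})$ of $\cL_0 u_0 = \phi_\e f$ satisfying
$$\Big(\fint_{Q_1^\e}\phi_\e^2 |u_\e - u_0|^2\Big)^{1/2} \le C\e^{1/8}\,I_\e(u_\e;2).$$

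\textbf{Step 2 ($C^{1,\alpha}$ regularity of $u_0$).} Since $\widehat{A}$ is constant, symmetric and positive definite, and $\phi_\e f \in L^p(Q_{4/3})$ with $p > d$, the standard Calder\'on--Zygmund and Schauder theory yields $u_0 \in C^{1,\alpha}_{\rm loc}(Q_{4/3})$ with $\alpha = 1 - d/p$. Because $\cL_0$ annihilates affine functions, $u_0 - M\cdot x - q$ solves the same PDE, so for any $M, q$,
$$[u_0]_{C^{1,\alpha}(Q_{1/2})} \le C\Big(\|u_0 - M\cdot x - q\|_{L^2(Q_1)} + \|f\|_{L^p(Q_1)}\Big).$$
A Taylor expansion then gives, for every $\theta \in (0, 1/4)$, a choice of $M_*, q_*$ with
$$\frac{1}{\theta}\Big(\fint_{Q_\theta}|u_0 - M_*\cdot x - q_*|^2\Big)^{1/2} \le C\theta^{\alpha}[u_0]_{C^{1,\alpha}(Q_{1/2})}.$$

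\textbf{Step 3 (Triangle inequality on the perforated cube).} Splitting $u_\e - M_*\cdot x - q_* = (u_\e - u_0) + (u_0 - M_*\cdot x - q_*)$ and using $\phi_\e \le C$ together with $|Q_\theta^\e| \approx \theta^d$, I would bound
$$H_\e(u_\e;\theta) \le \frac{C\e^{1/8}}{\theta^{1+d/2}}\,I_\e(u_\e;2) + C\theta^{\alpha}\,[u_0]_{C^{1,\alpha}(Q_{1/2})} + \theta^{1 - d/p}\|f\|_{L^p(Q_1^\e)}.$$
To convert $[u_0]_{C^{1,\alpha}(Q_{1/2})}$ into something involving $H_\e(u_\e;1)$, one uses the infimum form in Step 2, replaces $u_0$ by $u_\e$ via Step 1 (at a cost of $C\e^{1/8}I_\e(u_\e;2)$), and corrects the $L^2(Q_1)$ vs.\ weighted $L^2(Q_1^\e)$ discrepancy on the holes $Q_1 \setminus Q_1^\e$ using the interior $L^\infty$ bound on $u_0$ and $|Q_1 \setminus Q_1^\e| = O(\e^d)$. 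The result takes the form
$$H_\e(u_\e;\theta) \le C\theta^{\alpha} H_\e(u_\e;1) + C_\theta\,\e^{1/8}\,I_\e(u_\e;2).$$

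\textbf{Step 4 (Choice of $\theta$).} Fix $\theta \in (0, 1/4)$ so small that $C\theta^{\alpha} \le 1/2$; $\e_0$ is then chosen so that the various smallness conditions used along the way hold. This completes the proof.

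\textbf{Main obstacle.} The delicate point is Step 3: matching the plain $L^2(Q_1)$ norm naturally produced by the Schauder estimate for $u_0$ against the $\phi_\e$-weighted norm on the perforated cube $Q_1^\e$ that defines $H_\e$. The mismatch lives on the holes, where the weight $\phi_\e$ degenerates; one absorbs it using the $L^\infty$ bound on $u_0$ from $C^{1,\alpha}$ regularity and the $O(\e^d)$ hole volume, which is much smaller than the $\e^{1/8}$ error allowed in the statement (for $d \ge 2$). Everything else is bookkeeping: applying the previously-established local convergence rate, using constant-coefficient Schauder, and balancing $\theta$ against the $\e^{1/8}$ error.
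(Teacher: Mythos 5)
Your overall scheme -- comparing $u_\e$ with the homogenized solution $u_0$ supplied by Theorem \ref{thm.local.rate}, invoking constant-coefficient $C^{1,\alpha}$ estimates for $u_0$, and closing with a triangle inequality and a fixed choice of $\theta$ -- is the same as the paper's. The genuine gap is in your Step 3, exactly at the point you flag as the main obstacle: the passage between the unweighted norm $\|u_0 - M\cdot x - q\|_{L^2(Q_1)}$ coming from Schauder theory and the $\phi_\e^2$-weighted norms defining $H_\e$. Your mechanism rests on the claims that this mismatch ``lives on the holes'' and that $|Q_1\setminus Q_1^\e| = O(\e^d)$. Both are false: $Q_1$ contains roughly $\e^{-d}$ periodic cells, each with holes of volume $\approx \e^d|T|$, so $|Q_1\setminus Q_1^\e|\approx |T|\,|Q_1|$ is a fixed fraction of the cube, independent of $\e$; moreover, even on $Q_1^\e$ the weight is not comparable to $1$, since $\phi_\e \approx \dist(\cdot,T_\e)$ vanishes continuously along every hole boundary, and these boundaries are spread through the whole cube at scale $\e$. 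Consequently the correction term you propose is of size $O(1)\cdot\|u_0\|_{L^\infty}\approx I_\e(u_\e;2)$ with no smallness in $\e$, and your decay inequality would come out as $H_\e(u_\e;\theta)\le C\theta^\alpha H_\e(u_\e;1) + C\theta^\alpha I_\e(u_\e;2) + C_\theta\,\e^{1/8} I_\e(u_\e;2)$. This is not \eqref{Est.Exc.1step}, and the extra term violates hypothesis \eqref{cond.Hr} of the iteration Lemma \ref{Lem.Iteration}, which requires the coefficient of $H+h$ (equivalently of $I_\e$) to be a Dini function of $\e/r$.

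The paper bridges the weighted/unweighted gap differently, and this is the idea missing from your proposal: choose $(M_0,q_0)$ minimizing the \emph{weighted} $L^2(Q_{3/4})$ norm of $u_0 - M\cdot x - q$, note $|M_0|+|q_0|\le C\|u_0\|_{L^\infty(Q_{3/4})}\le C I_\e(u_\e;2)$, and use that $\phi_\e^2 - a_0$, with $a_0=\int_{Y_*}\phi^2$, is periodic with mean zero, hence can be written as $\nabla\cdot(\e\psi_\e)$ for a bounded periodic vector field $\psi$. Integrating by parts against $|u_0 - M_0\cdot x - q_0|^2$ and using the Lipschitz control $\|u_0\|_{C^{0,1}(Q_{3/4})}\le CI_\e(u_\e;2)$ from \eqref{est.u0.C1a} shows that the weighted and unweighted averages of $u_0 - M_0\cdot x - q_0$ differ only by $C\e\, I_\e(u_\e;2)$; this is the claim \eqref{est.claim.Hu0}, and it is the oscillation of the periodic weight, not the measure of the holes, that produces the smallness. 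Without this (or an equivalent argument exploiting the periodic structure of $\phi_\e^2$), your Step 3 cannot deliver the $\e^{1/8}$ factor in front of $I(u_\e;2)$ demanded by the lemma.
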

\begin{proof}
    Let $u_0$ be given as in the proof of Theorem \ref{thm.local.rate} with $r = 1$. Then
    \begin{equation}\label{est.ue-u0.inQ1}
        \bigg( \fint_{Q_{1}^\e } \phi_\e^2 |u_\e - u_0 |^2 \bigg)^{1/2} \le C\e^\frac18 I_\e(u_\e;2).
    \end{equation}
    Note that we have replaced $u_\e$ by $u_\e - L$ on the right-hand side of \eqref{est.localL2rate} and taken infimum over all $L\in \R$.
    On the other hand, the energy estimate and \eqref{est.H1.coarea} implie
    \begin{equation}
        \| \nabla u_0 \|_{L^2(Q_t)} \le C\| \phi_\e \nabla u_\e \|_{L^2(Q_{3/2}^\e)} + C\| f \|_{L^2(Q_{3/2}^\e)}.
    \end{equation}
    By the interior estimate of the equation $\cL_0(u_0) = \phi_\e f$ in $Q_1$,
    \begin{equation}\label{est.u0.C1a}
    \aligned
        \| u_0 \|_{C^{1,\alpha}(Q_{3/4})}  & \le C\| \phi_\e \nabla u_\e \|_{L^2(Q_{3/2}^\e)} + C\| f \|_{L^p(Q_{3/2}^\e)}\\
        & \le CI(u_\e;2),
        \endaligned
    \end{equation}
    where $\alpha = 1-\frac{d}{p}$ and we have used the Caccioppoli estimate in the last line.
    
    Now let $\theta \in (0,1/2)$. By the triangle inequality and \eqref{est.ue-u0.inQ1}
    \begin{equation}\label{est.Exc.theta}
    \begin{aligned}
        H_\e(u_\e; \theta ) & \le H_\e(u_0;\theta) + \frac{1}{\theta} \bigg( \fint_{Q^\e_\theta} |u_\e - u_0|^2 \bigg)^{1/2}\\
        & \le H_\e(u_0;\theta) + \frac{C}{\theta^{1+d/2}} \e^\frac18 I_\e(u_\e;2).
    \end{aligned}
    \end{equation}

    Next, we claim that for any $\theta \in (0,1/2)$ (independent of $\e$), 
    \begin{equation}\label{est.claim.Hu0}
        H_\e(u_0;\theta) \le C\theta^\alpha H_\e(u_0;3/4) + C\e I_\e (u_0;2).
    \end{equation}
    In fact, for $\theta \in (0,1/2)$, by observing that $\cL_0(u_0-M\cdot x -q) = \cL_0(u_0) = \phi_\e f$ in $Q_1$ for any $M\in \R^d$ and $q\in \R$, we have
    \begin{equation}\label{est.He.theta2}
    \begin{aligned}
        H_\e(u_0;\theta) & \le \theta^\alpha \| \nabla u_0 \|_{C^\alpha(Q_{1/2})} + \theta^{1-d/p} \| F \|_{L^p(Q^\e_\theta)} \\
        & \le C \theta^\alpha \bigg\{ \bigg( \fint_{Q_{3/4}} |u_0 - M_0 \cdot x-q_0 |^2 \bigg)^{1/2} + \| f \|_{L^p(Q^\e_{3/4})} \bigg\},
    \end{aligned}
    \end{equation}
    where we choose $M_0\in \R^d$ and $q_0 \in \R$ such that
    \begin{equation}\label{def.M0q0}
        \bigg( \fint_{Q_{3/4}} \phi_\e^2 |u_0 - M_0 \cdot x-q_0 |^2 \bigg)^{1/2} = \inf_{M\in \R^d,q\in \R} \bigg( \fint_{Q_{3/4}} \phi_\e^2 |u_0 - M \cdot x-q |^2 \bigg)^{1/2}.
    \end{equation}
    Let $a_0 = \int_{Y_*} \phi^2>0$. Then it is easy to see that
    \begin{equation*}
    \aligned
        \sqrt{a_0}\bigg( \fint_{Q_{3/4}} |M_0 \cdot x+q_0 |^2 \bigg)^{1/2}  & \le C \bigg( \fint_{Q_{3/4}} \phi_\e^2 |M_0 \cdot x+q_0 |^2 \bigg)^{1/2}\\
        &\le C\bigg( \fint_{Q_{3/4}} \phi_\e^2 |u_0 |^2 \bigg)^{1/2}.
        \endaligned
    \end{equation*}
    This further implies that
    \begin{equation}\label{est.M0q0}
        |M_0| + |q_0| \le C\bigg( \fint_{Q_{3/4}} \phi_\e^2 |u_0 |^2 \bigg)^{1/2} \le C\| u_0 \|_{L^\infty(Q_{3/4})}.
    \end{equation}
    Moreover,
    there exists a bounded $Y$-periodic vector-valued function $\psi$ such that $\nabla\cdot (\e \psi_\e) = \phi_\e^2 - a_0$.
    It follows from the integration by parts, \eqref{est.M0q0} and \eqref{est.u0.C1a} that
    \begin{equation*}
    \begin{aligned}
        & \bigg| \bigg( \fint_{Q_{3/4}}(\phi_\e^2-a_0) |u_0 - M_0 \cdot x-q_0 |^2 \bigg)^{1/2} \bigg|\\
        & \le C \e \bigg( \fint_{\partial Q_{3/4}} |\psi_\e\cdot n|^2 |u_0 - M_0 \cdot x-q_0|^2 \bigg)^{1/2} \\
        &\qquad\qquad+ C\e \bigg( \fint_{Q_{3/4}} |\psi_\e||u_0 - M_0 \cdot x-q_0 | | \nabla u_0 - M_0| \bigg)^{1/2} \\
        & \le C\e \| u_0 \|_{C^{0,1}(Q_{3/4})} \\
        &  \le C\e I_\e(u_\e;2).
    \end{aligned}
    \end{equation*}
    This, combined with \eqref{est.He.theta2} and \eqref{def.M0q0},  gives
    \begin{equation*}
        H(u_0;\theta) \le C\theta^\alpha H(u_0;3/4) + C\e I(u_\e;2).
    \end{equation*}
    Thus, the claim \eqref{est.claim.Hu0} is proved.
    
    Finally, by \eqref{est.Exc.theta}, \eqref{est.claim.Hu0} and \eqref{est.ue-u0.inQ1}, we arrive at
    \begin{equation*}
    \aligned
        H_\e(u_\e;\theta)  & \le C\theta^\alpha H_\e(u_\e;3/4) + C \theta^{-1-\frac{d}{2}} \e^\frac18 I(u_\e;2) \\
        & \le C\theta^\alpha H_\e(u_\e;1) + C \theta^{-1-\frac{d}{2}} \e^\frac18 I(u_\e;2).
        \endaligned
    \end{equation*}
    Note that the constant $C$ above is independent of $\theta$. 
    As a result, we can choose and fix $\theta \in (0,1/2)$ such that $C\theta^\alpha \le 1/2$. This gives \eqref{Est.Exc.1step} as desired.
\end{proof}

We recall an iteration lemma.
\begin{lemma}[{\cite[Lemma 6.4.6]{Shen18}}] \label{Lem.Iteration}
    Let $H(r)$ and $h(r)$ be two nonnegative, continuous functions on the interval $(0,1]$. Let $0<\e<1/4$. Suppose that there exists a constant $C_0$ such that
    \begin{equation}\label{cond.sizeHh}
        \max_{r\le t\le 2r} H(t) \le C_0 H(2r) \quad \text{and} \quad \max_{r\le s, t\le 2r} |h(s) - h(t)| \le C_0 H(2r)
    \end{equation}
    for any $r\in [\e,1/2]$. Suppose further that
    \begin{equation}\label{cond.Hr}
        H(\theta r) \le \frac12 H(r) + C_0 \beta(\e/r) (H(2r) + h(2r)),
    \end{equation}
    for any $r\in [\e,1/2]$, where $\theta \in (0,1/4)$ and $\beta(t)$ is a nonnegative, nondecreasing function on $[0,1]$ such that $\beta(0) = 0$ and
    \begin{equation}\label{cond.Dini}
        \int_0^1 \frac{\beta(t)}{t} dt < \infty.
    \end{equation}
    Then
    \begin{equation}
        \max_{\e\le r\le 1} (H(r) + h(r) ) \le C  (H(1) + h(1) ),
    \end{equation}
    where $C$ depends only on $C_0,\theta$ and the function $\beta(t)$.
\end{lemma}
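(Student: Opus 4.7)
I would approach this as a classical iteration argument in the spirit of Campanato, coupled with a careful tracking of the feedback between $H$ and $h$. The plan is to choose a geometric sequence of scales, iterate the hypothesis \eqref{cond.Hr}, and then use the Dini assumption \eqref{cond.Dini} together with the doubling-type bounds \eqref{cond.sizeHh} to absorb error terms.

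First, I would fix $r_k = \theta^k$ and let $N$ be the largest integer with $r_N\ge \e$. The doubling condition $\max_{r\le t\le 2r}H(t)\le C_0 H(2r)$ implies, after a fixed number $m=m(\theta)$ of successive doublings, that any value $H(2r_k)$ appearing on the right of \eqref{cond.Hr} is bounded by $C_0^{m}$ times $H$ at some scale in $[r_{k-1},2r_{k-1}]$, and hence in terms of $H(r_{k-1})$ or $H(2r_{k-2})$ up to a constant depending only on $\theta$ and $C_0$. Similarly, chaining the oscillation bound $|h(s)-h(t)|\le C_0 H(2r)$ across the $O(1)$ dyadic annuli that fit between scales $r_k$ and $r_{k-1}$ gives
\[
h(2r_k)\le h(r_{k-1})+ C_1\,\max_{r_k\le t\le r_{k-2}} H(t)\le C_2\bigl(H(r_{k-1})+h(r_{k-1})\bigr),
\]
with $C_2=C_2(\theta,C_0)$. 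Inserting these into \eqref{cond.Hr} yields the clean recursion
\[
H(r_{k+1})\le \tfrac12 H(r_k) + C_3\,\beta(\e/r_k)\,G_k,\qquad G_k:=H(r_{k-1})+h(r_{k-1}),
\]
valid for $1\le k\le N-1$, with a trivial initial bound $G_1\le C_4(H(1)+h(1))$.

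Next, I would unroll the recursion to obtain
\[
H(r_{k+1})\le 2^{-k}H(r_1)+C_3\sum_{j=1}^{k} 2^{-(k-j)}\beta(\e/r_j)\,G_j.
\]
I would then set $\Phi_k:=\max_{0\le j\le k}(H(r_j)+h(r_j))$, which is nondecreasing, and use the oscillation condition chained through intermediate dyadic scales to get $h(r_{k+1})\le h(r_k)+C_5 H(r_k)$, so that
\[
\Phi_{k+1}\le \Phi_k + C_6 H(r_{k+1})\le \Phi_k + C_7\,\beta(\e/r_k)\,\Phi_k + C_7 \cdot 2^{-k}(H(1)+h(1)).
\]
This is a discrete Gr\"onwall-type inequality. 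Iterating gives
\[
\Phi_N\le \bigl(H(1)+h(1)\bigr)\,\prod_{j=1}^{N-1}\bigl(1+C_7\beta(\e/r_j)\bigr)\le \bigl(H(1)+h(1)\bigr)\,\exp\!\Bigl(C_7\sum_{j=1}^{N-1}\beta(\e\theta^{-j})\Bigr).
\]
The sum $\sum_{j\ge 1}\beta(\e\theta^{-j})$ is a lower Riemann sum for $\tfrac{1}{\log(1/\theta)}\int_\e^1 \beta(t)\tfrac{dt}{t}$ (using monotonicity of $\beta$), so by the Dini assumption \eqref{cond.Dini} it is bounded by a constant depending only on $\theta$ and $\beta$. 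This yields $\Phi_N\le C(H(1)+h(1))$, which controls $H(r)+h(r)$ at every $r=r_k$. Finally, to pass from the dyadic sequence $\{r_k\}$ to all $r\in[\e,1]$, I would invoke \eqref{cond.sizeHh} once more: any $r\in[r_{k+1},r_k]$ sits inside $[r_{k+1},2r_{k+1}]\cup\cdots$, so $H(r)+h(r)\lesssim \Phi_{k+1}+\Phi_k\le 2\Phi_N$.

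The main obstacle, in my view, is the coupling between $H$ and $h$: the inequality \eqref{cond.Hr} decays $H$ by the factor $1/2$ but feeds in $h(2r)$ through the small term $\beta(\e/r)$, while the only control on $h$ comes from oscillation estimates driven by $H$. The chaining argument that turns $|h(s)-h(t)|\le C_0 H(2r)$ into a dyadic summation, and the verification that the constant $C_5$ in $h(r_{k+1})\le h(r_k)+C_5 H(r_k)$ does not spoil the geometric decay of the $H$-part (which is why one must bundle $H+h$ into a single monotone quantity $\Phi_k$ rather than trying to iterate $h$ separately), is the delicate point. Once that packaging is in place, the Dini condition does its usual job of keeping the product/exponential finite independently of $\e$.
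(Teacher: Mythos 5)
Your proposal is, in substance, the standard proof of this lemma: the paper itself gives no argument (it simply cites \cite[Lemma 6.4.6]{Shen18}), and the proof there is the same dyadic iteration of \eqref{cond.Hr} combined with chaining of \eqref{cond.sizeHh} and a discrete Gr\"onwall step in which the Dini condition \eqref{cond.Dini} keeps the accumulated factor bounded uniformly in $\e$; your Riemann-sum bound for $\sum_j\beta(\e\theta^{-j})$ and the final passage from the scales $r_k=\theta^k$ to all $r\in[\e,1]$ are exactly right.

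One step is stated imprecisely, though it is repairable within your own scheme. From $|h(r_{k+1})-h(r_k)|\le C_5 H(r_k)$ you conclude $\Phi_{k+1}\le \Phi_k+C_6 H(r_{k+1})$; this does not follow, because the doubling hypothesis in \eqref{cond.sizeHh} only propagates control from larger scales to smaller ones, so the increment of $h$ between $r_{k+1}$ and $r_k$ is governed by $H(r_k)$, which need not be comparable to $H(r_{k+1})$. The correct intermediate bound is $\Phi_{k+1}\le \Phi_k+C_6\bigl(H(r_k)+H(r_{k+1})\bigr)$. This is harmless: your unrolled recursion also gives $H(r_k)\le 2^{-(k-1)}H(r_1)+C\,\beta(\e/r_{k-1})\,\Phi_{k-1}$, and since $\beta$ is nondecreasing, $\beta(\e/r_{k-1})\le\beta(\e/r_k)$, so the Gr\"onwall inequality you display, $\Phi_{k+1}\le \Phi_k+C_7\beta(\e/r_k)\Phi_k+C_7 2^{-k}(H(1)+h(1))$, still holds with adjusted constants and the rest of the argument goes through unchanged. (You should also record the trivial case $\e>\theta$, where no iteration is needed and the conclusion follows from chaining \eqref{cond.sizeHh} a bounded number of times, and note that the chain from $h(2r_k)$ should run only up to $r_{k-1}$, not $r_{k-2}$, so that all radii used stay in $[\e,1/2]$.)
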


\begin{theorem}\label{thm.LargeScaleLip}
    Under the same condition as Lemma \ref{lem.Exc.Q2}, for $\e \le r\le  2$, we have 
    \begin{equation}\label{est.LargeScaleLip}
        \bigg( \fint_{Q_{r}^\e} \phi_\e^2 |\nabla u_\e|^2 \bigg)^{1/2} \le C \bigg( \fint_{Q_{2}^\e} \phi_\e^2 |\nabla u_\e|^2 \bigg)^{1/2} + C\bigg( \fint_{Q_2^\e} |f|^p \bigg)^{1/p}.
    \end{equation}
\end{theorem}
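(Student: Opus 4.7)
The plan is to derive \eqref{est.LargeScaleLip} by combining the one-step excess decay in Lemma \ref{lem.Exc.Q2} with the iteration scheme in Lemma \ref{Lem.Iteration}, applied to quantities measuring affine approximability of $u_\e$ at each dyadic scale. The rescaling $v(y)=u_\e(ry)$ transforms $\cL_\e(u_\e)=\phi_\e f$ on $Q_{2r}^\e$ into an equation of the same form $\cL_{\e/r}(v)=\phi_{\e/r} F$ on $Q_2^{\e/r}$ with $F(y)=r^2 f(ry)$, thanks to the $Y$-periodicity of $\phi$ and $A$. Unwinding the scaling in Lemma \ref{lem.Exc.Q2} yields
$$
H_\e(u_\e;\theta r)\le \tfrac12 H_\e(u_\e;r)+C(\e/r)^{1/8}\,I_\e(u_\e;2r)
$$
for every $r\in[\e/\e_0,1]$; remaining scales $r\in[\e,\e/\e_0]$ differ from $\e$ by a bounded factor and are handled by direct comparison.

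Now set $H(r):=H_\e(u_\e;r)$ and $h(r):=|M(r)|$, where $(M(r),q(r))\in\R^d\times\R$ is a minimizer defining $H_\e(u_\e;r)$. To invoke Lemma \ref{Lem.Iteration} with $\beta(t)=t^{1/8}$ (which satisfies the Dini integrability \eqref{cond.Dini}), three facts are needed: (a) $\max_{r\le t\le 2r} H(t)\le C H(2r)$, immediate from comparison of nested averages; (b) $I_\e(u_\e;r)\le C(H(r)+h(r))$, obtained by writing $u_\e-q(r)=(u_\e-M(r)\cdot x-q(r))+M(r)\cdot x$ and using $|x|\le Cr$ on $Q_r^\e$; and (c) $|h(s)-h(t)|\le C H(2r)$ for $r\le s,t\le 2r$. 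Item (c) is the main obstacle: from the triangle inequality
$$
\frac{1}{r}\bigg(\fint_{Q_r^\e}\phi_\e^2\,|(M(s)-M(t))\cdot x+(q(s)-q(t))|^2\bigg)^{1/2}\le C\bigl(H(s)+H(t)\bigr),
$$
one must extract control of $|M(s)-M(t)|$ via a weighted coercivity estimate
$$
\fint_{Q_r^\e}\phi_\e^2\,|M\cdot x+q|^2\ge c\bigl(|M|^2 r^2+|q|^2\bigr),\qquad r\ge\e,
$$
which is the only place where the constraint $r\ge\e$ is essential; it follows by rescaling to the unit cube and using positivity of $\int_{Y_*}\phi^2$ together with a shift to the $\phi_\e^2$-weighted centroid to decouple the $M$ and $q$ directions.

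With the three hypotheses verified, Lemma \ref{Lem.Iteration} yields $\max_{\e\le r\le 1}(H(r)+h(r))\le C(H(1)+h(1))$. The Caccioppoli estimate \eqref{est.Caccioppoli} then gives, for $\e\le r\le 1$,
$$
\bigg(\fint_{Q_r^\e}\phi_\e^2\,|\nabla u_\e|^2\bigg)^{1/2}\le C\,I_\e(u_\e;2r)\le C\bigl(H(2r)+h(2r)\bigr)\le C\bigl(H(1)+h(1)\bigr),
$$
and $H(1)+h(1)$ is controlled by the right-hand side of \eqref{est.LargeScaleLip} via the weighted Poincar\'e inequality \eqref{est.u-L.Lp} on $Q_2^\e$, combined with the same coercivity bound to dominate $|M(1)|$. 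The case $1\le r\le 2$ follows trivially from comparison of averages.
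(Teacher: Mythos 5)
Your proposal is correct and follows essentially the same route as the paper: the one-step excess decay of Lemma \ref{lem.Exc.Q2} rescaled to all $r\in[\e/\e_0,1]$, the choice $H(r)=H_\e(u_\e;r)$, $h(r)=|M_r|$, verification of \eqref{cond.sizeHh}--\eqref{cond.Hr} with $\beta(t)=t^{1/8}$, Lemma \ref{Lem.Iteration}, and the Caccioppoli plus weighted Poincar\'e finish, with the range $\e\le r\le \e/\e_0$ absorbed by enlarging the constant. Your explicit weighted coercivity bound $\fint_{Q_r^\e}\phi_\e^2|M\cdot x+q|^2\ge c(r^2|M|^2+|q|^2)$ for $r\ge\e$ is exactly the content implicit in the paper's estimate $|M_s-M_t|\le \frac{C}{r}\bigl(\fint_{Q_r^\e}\phi_\e^2|(M_s-M_t)\cdot x|^2\bigr)^{1/2}$, stated a bit more carefully.
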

\begin{proof}
    By rescaling, \eqref{Est.Exc.1step} holds for any $\e/\e_0 \le r\le 1$, i.e.,
    \begin{equation}\label{est.Hr}
        H_\e( u_\e; \theta r) \le \frac{1}{2} H_\e(u_\e; r) + C_0 \Big( \frac{\e}{r} \Big)^\frac18 I_\e(u_\e; 2r).
    \end{equation}
    To apply Lemma \ref{Lem.Iteration}, we set $H(t) = H_\e(u_\e;t)$. In view of the definition of $H_\e(u_\e;t)$, we can find $M_t \in \R^d$ such that
    \begin{equation*}
        H(t) = H_\e(u_\e;t) = \inf_{q\in \R} \frac{1}{t}\bigg( \fint_{Q_t^\e} \phi_\e^2 | u -M_t\cdot x-q|^2 \bigg)^{1/2} + t\bigg( \fint_{Q_t^\e} |f|^p \bigg)^{1/p}.
    \end{equation*}
    Define $h(t) = |M_t|$. Then it is obvious that
    \begin{equation}\label{est.Ie2Hh}
        I_\e(u_\e;t) \le H(t) + C h(t).
    \end{equation}
    Hence, we can rewrite \eqref{est.Hr} as, for $\e/\e_0 \le r\le 1/2$,
    \begin{equation*}
        H(\theta r) \le \frac12 H(r) + C_0 \Big( \frac{\e}{r} \Big)^\frac18 ( H(2r) + h(2r)),
    \end{equation*}
    which verifies the main condition \eqref{cond.Hr} in Lemma \ref{Lem.Iteration} with $\beta(t) = t^\frac18$. Clearly, this particular $\beta(t)$ satisfies \eqref{cond.Dini}.

    Finally, to see \eqref{cond.sizeHh}, we only need to use the properties of $H(t) = H_\e(u_\e;t)$ and $h(t) = |M_t|$. In fact, for any $r\le t\le 2r$, we have $H(t) \le C H(2r)$ by enlarging the region from $Q^\e_t$ to $Q^\e_{2r}$ with comparable volumes. Moreover, for any $r\le s,t\le 2r$,
    \begin{equation*}
    \begin{aligned}
        & |h(s) - h(t)|  \le |M_s - M_t| \le \frac{C}{r}\bigg( \fint_{Q_r^\e} \phi_\e^2 | (M_s-M_t) \cdot x |^2 \bigg)^{1/2} \\
        &\le C\inf_{q\in \R} \frac{1}{r}\bigg( \fint_{Q_r^\e} \phi_\e^2 | u_\e - M_s\cdot x -q |^2 \bigg)^{1/2} + C \inf_{q\in \R} \frac{1}{r}\bigg( \fint_{Q_r^\e} \phi_\e^2 | u_\e - M_t\cdot x -q |^2 \bigg)^{1/2} \\
        & \le C \inf_{q\in \R} \frac{1}{r}\bigg( \fint_{Q_s^\e} \phi_\e^2 | u_\e - M_s\cdot x -q |^2 \bigg)^{1/2} + C\inf_{q\in \R} \frac{1}{r}\bigg( \fint_{Q_t^\e} \phi_\e^2 | u_\e - M_t\cdot x -q |^2 \bigg)^{1/2} \\
        & \le CH(s) + CH(t) \le C_0 H(2r).
    \end{aligned}
    \end{equation*}
    This verifies the condition \eqref{cond.sizeHh}. It follows from Lemma \ref{Lem.Iteration} that
    \begin{equation*}
        \max_{\e/\e_0 \le r\le 1} (H(r) + h(r) ) \le C  (H(1) + h(1) ),
    \end{equation*}
    By the Caccioppli inequality \eqref{est.Caccioppoli} and \eqref{est.Ie2Hh}, for $\e/\e_0\le r\le 1$, we have
    \begin{equation*}
    \begin{aligned}
        \bigg( \fint_{Q^\e_r} |\nabla u_\e|^2 \bigg)^{1/2} & \le CI_\e(u_\e;2r) \le C(H(2r) + h(2r)) \\
        &\le C  (H(1) + h(1) ) \\
        & \le C \bigg( \fint_{Q_{2}^\e} \phi_\e^2 |\nabla u_\e|^2 \bigg)^{1/2} + C\bigg( \fint_{Q_2^\e} |f|^p \bigg)^{1/p},
    \end{aligned}
    \end{equation*}
    where in the last inequality we also used the weighted Poincar\'{e} inequality \eqref{est.u-Lq.Dup} and the size estimate of $h(1)$. Note that the estimate in the range $\e \le r\le \e/\e_0$ follows trivially from the case $r = \e/\e_0$ with a larger constant $C$. This completes  the proof.
\end{proof}

\begin{remark}\label{rmk.bdaryLip}
    The above estimates continue to hold near the boundary if $\Omega$ is a $C^{1,\alpha}$ domain and $u_\e = 0$ on $\partial \Omega$. In fact, if $Q_r^\e$ is centered on $\partial \Omega$, then we only need to replace $Q_r^\e$ by $Q_r^\e \cap \Omega$ in the statement of Theorem \ref{thm.LargeScaleLip}. For the proof, we  modify the function $H_\e$ and $I_\e$ as follows:
    \begin{equation*}
    \begin{aligned}
        H_\e(u;r) & = \inf_{M\in \R^{d}} \frac{1}{r} \bigg\{ \bigg( \fint_{Q_r^\e} \phi_\e^2 | u -M\cdot x|^2 \bigg)^{1/2} + \| M\cdot x \|_{L^\infty(Q_r\cap \partial \Omega)} \\
    & \qquad + r\| \nabla_{\rm tan} (M\cdot x) \|_{L^\infty(Q_r\cap \partial \Omega)}  + r^2 \bigg( \fint_{Q_r^\e} |f|^p \bigg)^{1/p} \bigg\},
    \end{aligned}
\end{equation*}
and
\begin{equation*}
    I_\e(u;r) = \frac{1}{r} \bigg( \fint_{Q_r^\e} \phi_\e^2 | u|^2 \bigg)^{1/2} + r\bigg( \fint_{Q_r^\e} |f|^p \bigg)^{1/p}.
\end{equation*}
    The details are omitted (see \cite{Shen17} for a similar proof).
\end{remark}


\begin{proof}[Proof of Theorem \ref{main-thm-1} (i)] Without loss of generality assume $\text{diam}(\Omega) \approx 1$.
First, by the small-scale Lipschitz estimate \eqref{local-b2}, we have
\begin{equation*}
    \sup_{Q_{\e}^\e \cap \Omega } |\phi_\e \nabla u_\e| \le C \bigg( \fint_{Q_{m_0 \e}^\e \cap \Omega} \phi_\e^2 |\nabla u_\e|^2 \bigg)^{1/2} + C\e \bigg( \fint_{Q_{m_0  \e}^\e \cap \Omega} |f|^p \bigg)^{1/p}.
\end{equation*}
Then by the large-scale Lipschitz estimate \eqref{est.LargeScaleLip} for $r = m_0 \e$ (including the boundary case in Remark \ref{rmk.bdaryLip}), we have
\begin{equation*}
    \bigg( \fint_{Q_{m_0 \e}^\e \cap \Omega} \phi_\e^2 |\nabla u_\e|^2 \bigg)^{1/2} \le C \bigg( \fint_{Q_{2}^\e \cap \Omega} \phi_\e^2 |\nabla u_\e|^2 \bigg)^{1/2} + C\bigg( \fint_{Q_2^\e \cap \Omega} |f|^p \bigg)^{1/p}.
\end{equation*}
Combining the last two estimates, we obtain
\begin{equation*}
    \sup_{Q_{\e}^\e \cap \Omega } |\phi_\e \nabla u_\e| \le C \bigg( \fint_{Q_{2}^\e \cap \Omega} \phi_\e^2 |\nabla u_\e|^2 \bigg)^{1/2} + C\bigg( \fint_{Q_2^\e \cap \Omega} |f|^p \bigg)^{1/p}.
\end{equation*}
Since the above estimate can be translated arbitrarily, it implies
\begin{equation}\label{est.Q1toQ2Lip}
    \sup_{Q_{1}^\e \cap \Omega } |\phi_\e \nabla u_\e| \le C \bigg( \fint_{Q_{2}^\e \cap \Omega} \phi_\e^2 |\nabla u_\e|^2 \bigg)^{1/2} + C\bigg( \fint_{Q_2^\e \cap \Omega} |f|^p \bigg)^{1/p},
\end{equation}
and
\begin{equation*}
    \| \phi_\e \nabla u_\e \|_{L^\infty(\Omega_\e)} \le C\| \phi_\e \nabla u_\e \|_{L^2(\Omega_\e)} + C\| f\|_{L^p(\Omega_\e)}.
\end{equation*}
This implies \eqref{est.GlobalLip} by the energy estimate \eqref{est.energy}.
\end{proof}

\section{$W^{1,p}$ estimates}\label{sec.6}

In this section, we prove the global $W^{1,p}$ estimates uniform in $\e$ in both Theorem \ref{main-thm-1} and Theorem \ref{main-thm-2}. Like Lipschitz estimate, the small-scale and large-scale estimates are handled  separately.

We begin with the small-scale $W^{1,p}$ estimate. In view of the discussion in Section \ref{sec.3}, it is sufficient to focus on the boundary case II.
\begin{lemma}\label{lem.SS.W1p}
    Let $p>2$ and $f\in L^p(Y_*)^d, F\in L^p(Y_*)$. Suppose that $u$ is a weak solution of
    \begin{equation*}
        -\text{\rm div}(\phi^2 A \nabla u) = \text{\rm div}(\phi f) + F, \quad \text{\rm in } Y_*.
    \end{equation*}
    Then if $B_0$ is a ball centered on $\partial T$ and $\text{\rm diam}(B_0) \le c_0/4$, then
    \begin{equation*}
        \bigg( \fint_{\frac12 B_0 \cap Y_*} |\phi \nabla u|^p \bigg)^{1/p} \le C\bigg\{ \bigg( \fint_{2B_0 \cap Y_*} |\phi \nabla u|^2 \bigg)^{1/2} + \bigg( \fint_{2B_0 \cap Y_*} (|f|^p + |F|^p ) \bigg)^{1/p} \bigg\}.
    \end{equation*}
\end{lemma}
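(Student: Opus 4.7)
The plan is to combine the small-scale Lipschitz estimate in Lemma~\ref{lem.SS.Lip} with Shen's real variable argument (see, e.g., \cite[Theorem 4.2.6]{Shen18}), applied to the weighted gradient $H = \phi|\nabla u|$ with data $G = |f| + |F|$ on $2B_0 \cap Y_*$. The starting point is a local decomposition: for each ball $B = B(y, r)$ with $4B \subset 2B_0$, let $v$ be the weak solution of the homogeneous problem $-\text{\rm div}(\phi^2 A \nabla v) = 0$ in $2B \cap Y_*$ with $v = u$ on $\partial(2B) \cap Y_*$ and no condition on $\partial T \cap 2B$ (consistent with the degeneracy), and set $w = u - v$, which inherits the source $\text{\rm div}(\phi f) + F$ and vanishes on $\partial(2B) \cap Y_*$.

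For the homogeneous piece $v$, Lemma~\ref{lem.SS.Lip} with zero right-hand side yields
$$\sup_{B \cap Y_*} |\phi \nabla v| \le C \bigg(\fint_{2B \cap Y_*} |\phi \nabla v|^2\bigg)^{1/2},$$
so $\phi|\nabla v|$ is controlled in every $L^p$, $p < \infty$, by the $L^2$ norm of $\phi|\nabla u|$ plus that of $\phi|\nabla w|$. For the correction $w$, I would test the weak formulation against $w$ itself, apply Cauchy--Schwarz to $\int \phi f \cdot \nabla w$ and to $\int F w$, and use a scale-invariant weighted Poincar\'{e} inequality $\|w\|_{L^2(2B \cap Y_*)} \le C \|\phi \nabla w\|_{L^2(2B \cap Y_*)}$, which I expect from rescaling Lemma~\ref{lem.local Poincare} together with the Dirichlet condition of $w$ on $\partial(2B) \cap Y_*$. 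This gives
$$\bigg(\fint_{2B \cap Y_*} |\phi \nabla w|^2\bigg)^{1/2} \le C \bigg(\fint_{2B \cap Y_*} |f|^2\bigg)^{1/2} + C \bigg(\fint_{2B \cap Y_*} |F|^2\bigg)^{1/2}.$$

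Writing $\phi|\nabla u| \le \phi|\nabla v| + \phi|\nabla w|$, these two estimates provide exactly the hypotheses of Shen's real variable lemma (with the ``Lipschitz part'' $\phi|\nabla v|$ bounded in $L^\infty$, so the admissible range is the full $p \in (2, \infty)$), and the claimed $L^p$ bound then follows immediately. The main technical obstacle I foresee is precisely this uniform weighted Poincar\'{e} inequality for $w$: one needs to verify it with a constant independent of the radius $r$ and of the position of $B$ relative to $\partial T$, handling separately the regime where $B$ is far from $\partial T$ (where $\phi \gtrsim r$ on $2B$ and the standard Poincar\'{e} inequality suffices) and the regime where $B$ meets $\partial T$ (where a Hardy-type control is required to absorb the vanishing of $\phi$, extracted from Lemma~\ref{lem.local Poincare} by a rescaling-and-extension-by-zero argument).
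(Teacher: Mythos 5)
Your proposal is correct and follows essentially the same route as the paper: a local decomposition of $u$ into a homogeneous piece controlled in $L^\infty$ by the weighted Lipschitz estimate of Lemma \ref{lem.SS.Lip} (or the classical interior estimate when the ball stays away from $T$) and a zero-boundary-data inhomogeneous piece controlled in $L^2$ by the energy estimate (whose scale-invariant weighted Poincar\'e/Hardy inequality you correctly identify as the point needing verification), followed by Shen's real-variable argument with $\eta=0$ applied to $U=\phi|\nabla u|$ extended by zero into the holes. The only step to make explicit, as the paper does, is the covering of an arbitrary ball $B\subset B_0$ by finitely many balls of comparable size falling into the three model cases ($4B'\subset Y_*$, $B'$ centered on $\partial T$, or $B'$ inside a hole where $U=0$), since Lemma \ref{lem.SS.Lip} as stated applies only to balls centered on $\partial T$.
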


The proof relies on the following interior real-variable argument.
\begin{theorem}[{\cite[Theorem 4.2.3]{Shen18}}]\label{thm.int.realvar}
        Let $q>2$ and $B_0$ be a ball in $\R^d$. Let $U\in L^2(4B_0)$ and $f\in L^p(4B_0)$ for some $2<p<q$. Suppose that for each ball or cube $B \subset 2B_0$ with properties that $\text{\rm diam}(B) \le r_0 \text{\rm diam}(B_0)$, there exist two measurable function $U_B$ and $R_B$ defined in $2B$, such that $|U| \le |U_B| + |R_B|$ in $2B$, and
    \begin{equation*}
        \bigg( \fint_{{2B}} |R_B|^q \bigg)^{1/q} \le N_1 \bigg\{ \bigg( \fint_{{4B}} |U|^2 \bigg)^{1/2} + \bigg( \fint_{{4B}} |f|^2 \bigg)^{1/2} \bigg\},
    \end{equation*}
    \begin{equation*}
        \bigg( \fint_{{2B}} |U_B|^2 \bigg)^{1/2} \le N_2  \bigg( \fint_{{4B}} |f|^2 \bigg)^{1/2} + \eta \bigg( \fint_{{4B}} |U|^2 \bigg)^{1/2},
    \end{equation*}
    where $N_1,N_2>1$ and $0<r_0<1$. Then there exists $\eta_0>0$, depending only on $N_1,N_2,r_0,p$ and $q$, with the property that if $0\le \eta < \eta_0$, then $U\in L^p(B_0)$ and
    \begin{equation}\label{est.LS.realvar-2}
        \bigg( \fint_{B_0} |U|^p dx \bigg)^{1/p} \le C \bigg\{ \bigg( \fint_{4B_0} |U|^2 \bigg)^{1/2} + \bigg( \fint_{4B_0} |f|^p \bigg)^{1/p} \bigg\}.
    \end{equation}
    where $C$ depends at most on $N_1,N_2,r_0,p$ and $q$.
\end{theorem}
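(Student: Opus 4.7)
I would prove this by the standard Calderón–Zygmund / good-$\lambda$ machinery for real-variable improvement of integrability. The two mechanisms driving the argument are: (i) the higher integrability $q>p$ of the ``regular part'' $R_B$, which yields a super-polynomial gain in the level-set distribution, and (ii) the arbitrarily small coefficient $\eta$ in front of $\|U\|_{L^2}$ in the bound for the ``bad part'' $U_B$, which is exactly what allows absorption of the resulting self-referential term at the end.

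\textbf{Setup and stopping-time decomposition.} After translation and dilation I may assume $B_0$ is the unit ball. I extend $U$ and $f$ by zero outside $4B_0$, and introduce a local Hardy–Littlewood maximal function $\mathcal{M}$ formed over balls of radius $\le c\, r_0$. Set
\begin{equation*}
E_\lambda=\{x\in B_0:\mathcal{M}(|U|^2)(x)>\lambda\},\qquad F_\lambda=\{x\in 4B_0:\mathcal{M}(|f|^2)(x)>\lambda\}.
\end{equation*}
Fix a threshold $\lambda_0$ comparable to $\fint_{4B_0}(|U|^2+|f|^2)$. For $\lambda>\lambda_0$, a Vitali-type stopping-time procedure produces a disjoint family of balls $\{B_i\}$ with $\operatorname{diam}(B_i)\le r_0$, $\fint_{4B_i}|U|^2\approx\lambda$, and $E_\lambda\subset\bigcup_i 5B_i$.

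\textbf{The good-$\lambda$ inequality.} On each $B_i$ the hypothesis gives a decomposition $U=U_{B_i}+R_{B_i}$ on $2B_i$. For $A$ large and $t=A\lambda^{1/2}$, Chebyshev plus the two assumed bounds yields
\begin{equation*}
|\{x\in 2B_i:|U|(x)>t\}|\le C N_1^q A^{-q}|B_i|+CN_2^2 A^{-2}\lambda^{-1}\|f\|_{L^2(4B_i)}^2|B_i|^{\,?}+C\eta^2 A^{-2}|B_i|+(\text{tail in }|f|).
\end{equation*}
Summing over the disjoint family and folding the $f$-terms into $F_{c\lambda}$ produces the good-$\lambda$ inequality
\begin{equation*}
|E_{A^2\lambda}|\le\bigl(C_1 A^{-q}+C_2\eta^2 A^{-2}\bigr)|E_\lambda|+C_3|F_{c\lambda}|,\qquad \lambda>\lambda_0.
\end{equation*}
Multiplying by $p\lambda^{p/2-1}$, integrating over $\lambda\in(\lambda_0,\infty)$, and invoking the layer-cake identity converts this into an $L^{p/2}$ estimate for $\mathcal{M}(|U|^2)$:
\begin{equation*}
\int_{B_0}(\mathcal{M}|U|^2)^{p/2}\le\bigl(C_1 A^{p-q}+C_2\eta^2 A^{p-2}\bigr)\int_{B_0}(\mathcal{M}|U|^2)^{p/2}+C\|f\|_{L^p(4B_0)}^p+C\lambda_0^{p/2}|B_0|.
\end{equation*}
Because $q>p$ I can fix $A$ so large that $C_1A^{p-q}<\tfrac14$, and then choose $\eta_0=\eta_0(N_1,N_2,r_0,p,q)$ so that $C_2\eta_0^2 A^{p-2}<\tfrac14$. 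Absorption of the first term on the right, combined with the $L^{p/2}$-boundedness of $\mathcal{M}$ (which uses $p/2>1$, valid since $p>2$), produces \eqref{est.LS.realvar-2}.

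\textbf{Main obstacle.} The delicate point is the closure of the iteration: both sides of the intermediate estimate involve the same quantity $\int_{B_0}(\mathcal{M}|U|^2)^{p/2}$, which is a priori only known to be finite if one already knows $U\in L^p$. I would resolve this by first applying the entire argument to the truncation $U_N:=\min(|U|,N)$ (for which the left-hand side is automatically finite), obtaining an $N$-uniform estimate, and then letting $N\to\infty$ via monotone convergence. A secondary technical nuisance is keeping track of the scale-respecting requirement $\operatorname{diam}(B)\le r_0\operatorname{diam}(B_0)$ throughout the Vitali covering; this is handled by restricting $\mathcal{M}$ to that range of radii from the outset and choosing $\lambda_0$ large enough that the stopping balls automatically satisfy the constraint.
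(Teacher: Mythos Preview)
The paper does not give its own proof of this theorem; it is quoted verbatim from \cite[Theorem 4.2.3]{Shen18} and used as a black box in the proof of Lemma~\ref{lem.SS.W1p}. So there is nothing in the paper to compare against.

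That said, your good-$\lambda$ sketch is exactly the argument used in the cited reference: a Calder\'on--Zygmund stopping-time on $\mathcal{M}(|U|^2)$, a distributional good-$\lambda$ inequality coming from the splitting $|U|\le |U_B|+|R_B|$ on each stopping ball, and the two-step absorption (first choose $A$ large using $q>p$, then choose $\eta_0$ small). Your identification of the main technical obstacle---the a priori finiteness of $\int(\mathcal{M}|U|^2)^{p/2}$ needed for absorption---and its resolution by truncation is also correct and is how the reference handles it. The only loose ends in your sketch are cosmetic: the ``$|B_i|^{\,?}$'' exponent and the ``tail in $|f|$'' should be written out (both fold into $|F_{c\lambda}|$ after summing over the Vitali family), and you should note that the stopping balls satisfy $4B_i\subset 2B_0$ so that the hypothesis on $U_B,R_B$ is available. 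None of these affect the validity of the approach.
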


\begin{proof}[Proof of Lemma \ref{lem.SS.W1p}]
    Without loss of generality, assume $\text{diam}(B_0) = c_0/10$. Let $B \subset B_0$ be a ball such that either $4B \subset Y_*$ or $B$ is centered on $\partial T$. For the first case, $\phi(x) \approx \phi (y) \approx  \text{diam}(B)$ for any $x, y \in 2B$. Let $v$ be the weak solution of
    \begin{equation*}
        -\text{div}(\phi^2 A \nabla v) = \text{\rm div}(\phi f) + F \quad \text{in } 2B, \quad \text{ and } \quad v = 0 \quad \text{on } \partial (2B).
    \end{equation*}
    By the energy estimate, we have
    \begin{equation}\label{est.RealVar-1}
        \| \phi \nabla v \|_{L^2(2B)} \le C\big( \| f \|_{L^2(2B)} + \| F \|_{L^2(2B)} \big).
    \end{equation}
    Let $w = u - v$. Then $w$ satisfies
    \begin{equation*}
        -\text{div}(\phi^2 A \nabla w) = 0 \quad \text{in } 2B, \quad \text{ and } \quad w = u \quad \text{on } \partial (2B).
    \end{equation*}
    Due to the fact $\phi 
    (x) \approx \phi (y) $ for $x, y\in 2B$, the classical Lipschitz estimate yields
    \begin{equation}\label{est.RealVar-2}
    \begin{aligned}
        \sup_{B} |\phi \nabla w| & \le C\bigg( \fint_{2B} |\phi \nabla w|^2 \bigg)^{1/2} \\
        & \le C\bigg( \fint_{2B} |\phi \nabla u|^2 \bigg)^{1/2} + C\bigg( \fint_{2B} (|f| + |F|)^2 \bigg)^{1/2},
    \end{aligned}  
    \end{equation}
    where we have used \eqref{est.RealVar-1} in the last inequality.

    For the second case that $B$ is centered on $\partial T$, we again let $v$ be the weak solution of
    \begin{equation*}
        -\text{div}(\phi^2 A \nabla v) = \text{\rm div}(\phi f) + F \quad \text{in } 2B\cap Y_*, \quad \text{ and } \quad v = 0 \quad \text{on } \partial (2B)\cap Y_*.
    \end{equation*}
    The energy estimate implies
    \begin{equation}\label{est.RealVar-3}
        \| \phi \nabla v \|_{L^2(2B\cap Y_*)} \le C\big( \| f \|_{L^2(2B\cap Y_*)} + \| F \|_{L^2(2B\cap Y_*)} \big).
    \end{equation}
    Then $w = u -v$ satisfies
    \begin{equation*}
        -\text{div}(\phi^2 A \nabla w) = 0 \quad \text{in } 2B\cap Y_*, \quad \text{ and } \quad w = u \quad \text{on } \partial (2B)\cap Y_*.
    \end{equation*}
    Now, we use \eqref{lem.SS.Lip} to get
    \begin{equation}\label{est.RealVar-4}
        \begin{aligned}
        \sup_{B\cap Y_*} |\phi \nabla w| & \le C\bigg( \fint_{2B \cap Y_*} |\phi \nabla w|^2 \bigg)^{1/2} \\
        & \le C\bigg( \fint_{2B\cap Y_*} |\phi \nabla u|^2 \bigg)^{1/2} + C\bigg( \fint_{2B\cap Y_*} (|f| + |F|)^2 \bigg)^{1/2},
    \end{aligned} 
    \end{equation}
    where we have used \eqref{est.RealVar-3} in the last inequality.

    Now, we explain how the interior real-variable argument is applied in $B_0$ to $U = \phi \nabla u$. Let $U$ be extended across $\partial T$ to $2B_0$ by zero. For each $B\subset B_0$, $B$ can be covered by a finite number of balls $B'_j$ (with finite overlaps) satisfying either one of the following three cases: (i) $4B'_j \subset Y_* $; (ii) $B'_j$ is centered on $\partial T$; (iii) $4B \cap Y_* = \emptyset$. Moreover, $\text{diam}(B'_j) \approx \text{diam}(B)$. In all three cases, we have $|U| \le U_j + R_j$ in $2B'_j$ with $U_j = |\phi \nabla v|$ and $R_j = |\phi \nabla w|$ supported in $2B'_j$, where $v$ and $w$ have been constructed previously for cases (i) and (ii). The case (iii) is trivial since $U = 0$ in $2B_0 \setminus Y_*$. Therefore, by \eqref{est.RealVar-1}, \eqref{est.RealVar-2}, \eqref{est.RealVar-3} and \eqref{est.RealVar-4}, we have
    \begin{equation*}
        \sup_{B'_j} |R_j| \le C\bigg( \fint_{2B'_j} |U|^2 \bigg)^{1/2} + C\bigg( \fint_{2B'_j} (|f| + |F|)^2 \bigg)^{1/2},
    \end{equation*}
    and
    \begin{equation*}
        \bigg( \fint_{2B'_j} |U_B|^2 \bigg)^{1/2} \le C\bigg( \fint_{2B'_j} (|f| + |F|)^2 \bigg)^{1/2},
    \end{equation*}
    where $F$ and $f$ are also extended by zero. Now, using $B \subset \cup_j B'_j$ and setting $R_B = \sum_{j} R_j$ and $U_B = \sum_{j} U_j$, we have $|U| \le R_B + U_B$ and 
    \begin{equation*}
        \sup_{B} |R_B| \le C\bigg( \fint_{2B} |U|^2 \bigg)^{1/2} + C\bigg( \fint_{2B} (|f| + |F|)^2 \bigg)^{1/2},
    \end{equation*}
    and
    \begin{equation*}
        \bigg( \fint_{B} |U_B|^2 \bigg)^{1/2} \le C\bigg( \fint_{2B} (|f| + |F|)^2 \bigg)^{1/2}.
    \end{equation*}
    Since $B\subset B_0$ is arbitrary, we apply Theorem \ref{thm.int.realvar} with $\eta = 0$ to obtain
    \begin{equation*}
        \bigg( \fint_{\frac12 B_0} |U|^p \bigg)^{1/p} \le C\bigg( \fint_{2B_0} |U|^2 \bigg)^{1/2} +  C\bigg( \fint_{2B_0} (|f| + |F|)^p \bigg)^{1/p}.
    \end{equation*}
    Since $U = \phi \nabla u$ and $f,F$ are extended by zero in the holes, the last inequality implies the desired estimate.
\end{proof}

    \begin{lemma}\label{lem.SmallScale.W1p}
        Let $p\ge 2$ and $f\in L^p(\Omega_\e)^d, F\in L^p(\Omega_\e)$. Let $u_\e\in H^1_{\phi_\e,0}(\Omega_\e)$ be the weak solution of \eqref{main.eq.F} in $\Omega_\e$. 
        Suppose $\e(k+Y_*) \cap \Omega_\e \neq \emptyset$. Then,
        \begin{equation*}
        \begin{aligned}
            \bigg( \fint_{\e(k+Y_*) \cap \Omega_\e} |\phi_\e \nabla u_\e|^p \bigg)^{1/p} & \le C\bigg( \fint_{\e(k+Y_*^+) \cap \Omega_\e} |\phi_\e \nabla u_\e|^2 \bigg)^{1/2} \\
            & \quad + C \bigg( \fint_{\e(k+Y_*^+) \cap \Omega_\e} |f|^p \bigg)^{1/p} + C\e \bigg( \fint_{\e(k+Y_*^+) \cap \Omega_\e} |F|^p \bigg)^{1/p}.
        \end{aligned}
        \end{equation*}
    \end{lemma}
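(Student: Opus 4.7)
\smallskip
\noindent\emph{Proof proposal for Lemma \ref{lem.SmallScale.W1p}.}
The plan is to reduce to the unit-scale estimate of Lemma \ref{lem.SS.W1p} by rescaling and then pass from balls to a full cell by a finite covering argument, treating cells that touch $\partial\Omega$ separately. First I would set $v(y)=u_\e(\e y+\e k)$ on the rescaled cell $Y_*^+$ (using the $Y$-periodicity of $\phi$ and $A$ to reabsorb the shift by $k$), and let $\tilde f(y)=\e f(\e y+\e k)$, $\tilde F(y)=\e^2 F(\e y+\e k)$. A direct chain-rule computation of the weak formulation shows that
\begin{equation*}
-\operatorname{div}(\phi^2 A \nabla v) = \operatorname{div}(\phi \tilde f)+\tilde F \quad\text{in the rescaled cell.}
\end{equation*}
The asserted inequality is equivalent, after changing variables, to a $W^{1,p}$ bound for $\phi\nabla v$ on $Y_*$ in terms of $\phi\nabla v$, $\tilde f$, $\tilde F$ on $Y_*^+$, and the prefactors $1,1,\e$ on the three source terms come precisely from the scalings $\phi_\e\nabla u_\e=\e^{-1}(\phi\nabla v)(\cdot/\e+k)$, $f=\e^{-1}\tilde f(\cdot/\e+k)$, $F=\e^{-2}\tilde F(\cdot/\e+k)$.

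In the interior case (the cell is compactly contained in $\Omega$), I would cover $k+Y_*$ by a finite number of balls of the form $\tfrac12 B$ with $\operatorname{diam}(B)\le c_0/4$, chosen so that each $B$ falls into one of two categories: either $B$ is centered at a point of $\partial T$, in which case Lemma \ref{lem.SS.W1p} applies directly and gives
\begin{equation*}
\Big(\fint_{\frac12 B\cap Y_*}|\phi\nabla v|^p\Big)^{1/p}\le C\Big(\fint_{2B\cap Y_*}|\phi\nabla v|^2\Big)^{1/2}+C\Big(\fint_{2B\cap Y_*}(|\tilde f|^p+|\tilde F|^p)\Big)^{1/p};
\end{equation*}
or $2B\subset Y_*$ stays a fixed positive distance from $\partial T$, in which case $\phi$ is bounded above and below by positive constants and the equation reduces to a uniformly elliptic equation with H\"older continuous coefficients, to which the classical interior $W^{1,p}$ estimate applies. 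Summing the resulting $L^p$ bounds over a finite covering (with comparable volumes and controlled overlap) yields
\begin{equation*}
\Big(\fint_{Y_*}|\phi\nabla v|^p\Big)^{1/p}\le C\Big(\fint_{Y_*^+}|\phi\nabla v|^2\Big)^{1/2}+C\Big(\fint_{Y_*^+}|\tilde f|^p\Big)^{1/p}+C\Big(\fint_{Y_*^+}|\tilde F|^p\Big)^{1/p}.
\end{equation*}
Rescaling back using the identities
\begin{equation*}
\Big(\fint_{\e(k+Y_*)}|\phi_\e\nabla u_\e|^p\Big)^{1/p}=\e^{-1}\Big(\fint_{Y_*}|\phi\nabla v|^p\Big)^{1/p},\quad \Big(\fint_{Y_*^+}|\tilde F|^p\Big)^{1/p}=\e^2\Big(\fint_{\e(k+Y_*^+)}|F|^p\Big)^{1/p}
\end{equation*}
and the corresponding formula for $\tilde f$, one obtains exactly the claimed inequality.

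In the boundary case, the cell $\e(k+Y_*)$ intersects $\partial\Omega$. Here the geometric hypothesis \eqref{H} is decisive: since $\operatorname{dist}(\partial\Omega,\partial T_\e)\ge c_0$ and the cell has diameter of order $\e$, for all sufficiently small $\e$ the whole of $\e(k+Y_*^+)$ sits at distance $\ge c_0/2$ from $T_\e$, so that $\phi_\e\approx 1$ throughout the cell. The equation $\cL_\e(u_\e)=\operatorname{div}(\phi_\e f)+F$ then becomes a uniformly elliptic equation (after rescaling, with H\"older continuous coefficients) with homogeneous Dirichlet data on $\partial\Omega$, and the classical boundary $W^{1,p}$ estimate for $C^1$ or convex domains applies; a simple rescaling gives the claim with the same proportionality of powers of $\e$ as above. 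The main obstacle in carrying this out is the interior step: one must verify carefully that Lemma \ref{lem.SS.W1p}, proved via Theorem \ref{thm.int.realvar} on balls, patches together on the irregular region $Y_*$ (which contains pieces of $\partial T$) without producing a loss in the $L^2\to L^p$ exponent, and that no constants depend on $\e$ after rescaling back.
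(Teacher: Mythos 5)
Your overall route is the same as the paper's: rescale $u_\e$ to a unit cell so that $v$ solves $-\operatorname{div}(\phi^2A\nabla v)=\operatorname{div}(\phi\tilde f)+\tilde F$ with $\tilde f=\e f(\e\cdot)$, $\tilde F=\e^2F(\e\cdot)$, apply Lemma \ref{lem.SS.W1p} on balls centered on $\partial T$ and the classical (uniformly elliptic) $W^{1,p}$ estimates away from the holes, patch by a finite covering, and rescale back; your bookkeeping of the prefactors $1,1,\e$ is correct, and the interior part of the covering argument is sound.

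The gap is in your boundary case. You read \eqref{H} as forcing a cell that meets $\partial\Omega$ to lie at distance $\ge c_0/2$ from $T_\e$, so that $\phi_\e\approx 1$ on all of $\e(k+Y_*^+)$ and only the classical boundary $W^{1,p}$ estimate is needed. This cannot be right: $T_\e$ is the full $\e$-periodic array, so every cell $\e(k+Y)$ contains its own hole $\e(k+T)$, and the separation that \eqref{H} actually provides is at scale $\e$, not at scale $1$ (the paper itself uses it in the form $\operatorname{dist}(\partial\Omega,\partial T_\e)\ge c_0\e$ in the proof of Theorem \ref{lem.Local Linfinity2}). Consequently a boundary cell may well contain holes lying inside $\Omega$, merely at distance $\gtrsim c_0\e$ from $\partial\Omega$; this is precisely the situation addressed in Remark \ref{rmk.Lip.T'} and in the three-ball covering of Theorem \ref{lem.Local Linfinity2}. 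As written, your boundary step would miss the degeneracy of $\phi_\e$ near those interior holes and the estimate you invoke (a purely nondegenerate boundary $W^{1,p}$ bound on the whole cell) is not available. The repair is the one the paper uses implicitly (``Lemma \ref{lem.SS.W1p} and the classical $W^{1,p}$ estimate away from $\Gamma_\e$''): in a boundary cell, cover $\e(k+Y_*)\cap\Omega_\e$ by balls of three types, namely balls centered on the hole boundaries (apply Lemma \ref{lem.SS.W1p}, which is unaffected by $\partial\Omega$ since the holes stay $c_0\e$ away from it), interior balls away from both $T_\e$ and $\partial\Omega$ (classical interior $W^{1,p}$, since $\phi_\e\approx$ const there), and balls centered on $\partial\Omega$ of radius $\sim c_0\e$, on which $\phi_\e\approx1$ and the classical local boundary $W^{1,p}$ estimate with zero Dirichlet data on a $C^1$ or convex boundary applies; then sum over the finite covering exactly as in your interior case. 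With that correction your argument coincides with the paper's proof.
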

    \begin{proof}
        We first rescale the problem. Let $v(x) = u_\e(\e x)$. Then $v$ satisfies
        \begin{equation*}
            -\text{div}(\phi^2 A \nabla v) = \text{\rm div}(\e \phi f(\e\cdot)) + \e^2 F(\e\cdot) \quad \text{in } (k+Y_{*}^+) \cap \e^{-1}\Omega_\e.
        \end{equation*}
        Moreover, $v = 0$ on $(k+Y_{*}^+) \cap \e^{-1} \partial \Omega$. Then, by Lemma \ref{lem.SS.W1p} and the classical $W^{1,p}$ estimate away from $\Gamma_\e$, we have
        \begin{equation*}
        \begin{aligned}
            \bigg( \fint_{(k+Y_*) \cap \e^{-1}\Omega_\e} |\phi \nabla v|^p \bigg)^{1/p} & \le C\bigg( \fint_{(k+Y_*^+) \cap \e^{-1}\Omega_\e} |\phi \nabla v|^2 \bigg)^{1/2} \\
&\qquad\qquad\qquad             + C\e \bigg( \fint_{(k+Y_*^+) \cap \e^{-1}\Omega_\e} |f(\e\cdot )|^p \bigg)^{1/p} \\
             &\qquad\qquad\qquad
             + C\e^2 \bigg( \fint_{\e(k+Y_*^+) \cap \Omega_\e} |F(\e\cdot)|^p \bigg)^{1/p}.
        \end{aligned}
        \end{equation*}
        Rescaling back to $u_\e$, we obtain the desired estimate.
    \end{proof}


To prove the global $W^{1,p}$ estimate uniform in $\e$ up to  $\partial \Omega$, we need a large-scale global real-variable argument.
\begin{theorem}[A variant of {\cite[Theorem 4.2.6]{Shen18}}]\label{thm.LS.real}
    Let $q>2$ and $\Omega$ be a bounded Lipschitz domain. Let $U\in L^2(\Omega)$ and $f\in L^p(\Omega)$ for some $2<p<q$. Let $0\le t < r_0 \text{\rm diam}(\Omega)$ be a fixed number. Suppose that for each ball or cube $B$ with properties that $t< \text{\rm diam}(B) \le r_0 \text{\rm diam}(\Omega)$ and either $4B\subset \Omega$ or $B$ is centered on $\partial \Omega$, there exist two measurable function $U_B$ and $R_B$ on $\Omega \cap 2B$, such that $|U| \le |U_B| + |R_B|$ on $\Omega \cap 2B$, and
    \begin{equation}\label{cond.LSRV-1}
        \bigg( \fint_{\Omega \cap 2B} |R_B|^q \bigg)^{1/q} \le N_1 \bigg\{ \bigg( \fint_{\Omega \cap 4B} |U|^2 \bigg)^{1/2} + \bigg( \fint_{\Omega \cap 4B} |f|^2 \bigg)^{1/2} \bigg\},
    \end{equation}
    \begin{equation}\label{cond.LSRV-2}
        \bigg( \fint_{\Omega \cap 2B} |U_B|^2 \bigg)^{1/2} \le N_2  \bigg( \fint_{\Omega \cap 4B} |f|^2 \bigg)^{1/2} + \eta \bigg( \fint_{\Omega \cap 4B} |U|^2 \bigg)^{1/2},
    \end{equation}
    where $N_1,N_2>1$ and $0<r_0<1$. Then there exists $\eta_0>0$, depending only on $N_1,N_2,r_0,p,q,$ and the Lipschtiz character of $\partial \Omega$, with the property that if $0\le \eta < \eta_0$, then $U\in L^p(\Omega)$ and
    \begin{equation}\label{est.LS.realvar}
        \bigg( \fint_{\Omega} \bigg( \fint_{B_t(x) \cap \Omega} |U|^2 \bigg)^{p/2} dx \bigg)^{1/p} \le C \bigg\{ \bigg( \fint_{\Omega} |U|^2 \bigg)^{1/2} + \bigg( \fint_{\Omega} |f|^p \bigg)^{1/p} \bigg\},
    \end{equation}
    where $C$ depends at most on $N_1,N_2,r_0,p,q,$ and the Lipschitz character of $\partial \Omega$. In particular, if $t = 0$, then \eqref{est.LS.realvar} is replaced by
    \begin{equation}\label{est.LS.realvar-3}
        \bigg( \fint_{\Omega} |U|^p dx \bigg)^{1/p} \le C \bigg\{ \bigg( \fint_{\Omega} |U|^2 \bigg)^{1/2} + \bigg( \fint_{\Omega} |f|^p \bigg)^{1/p} \bigg\}.
    \end{equation}
\end{theorem}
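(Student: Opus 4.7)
The plan is to adapt the Calderón-Zygmund / good-$\lambda$ scheme underlying Theorem 4.2.6 of \cite{Shen18}, modified so that the stopping-time procedure terminates above the scale $t$. The mesoscopic $L^2$ average on the left of \eqref{est.LS.realvar} is precisely what makes this termination harmless.

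First, set
\[
G(x) := \Bigl(\fint_{B_t(x)\cap \Omega} |U|^2\Bigr)^{1/2} \text{ if } t>0, \qquad G(x) := |U(x)| \text{ if } t=0.
\]
By Fubini, $\|G\|_{L^2(\Omega)}\le C\|U\|_{L^2(\Omega)}$, and the target reduces to $\|G\|_{L^p(\Omega)}\le C\bigl(\|U\|_{L^2(\Omega)}+\|f\|_{L^p(\Omega)}\bigr)$. Let $\mathcal{M}_t$ denote the Hardy-Littlewood maximal operator on $\Omega$ restricted to balls of radius $\ge ct$; a direct Fubini computation shows $\mathcal{M}(G^2) \le C\mathcal{M}_t(G^2)$, so it suffices to bound super-level sets of $\mathcal{M}_t(G^2)$. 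Set $\lambda_0 := C_0|\Omega|^{-1}\int_\Omega(G^2+|f|^2)$ for a large constant $C_0$, and for $\lambda>\lambda_0$ consider $E_\lambda := \{x\in \Omega:\mathcal{M}_t(G^2)(x)>\lambda\}$.

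For $\lambda>\lambda_0$, decompose $E_\lambda$ by a Whitney-type family of maximal cubes $\{Q_k\}$, all of diameter $\ge ct$ by construction. Each $Q_k$ admits an associated ball $B_k$ of comparable size satisfying either $4B_k\subset \Omega$ or centered on $\partial \Omega$ (the boundary case is obtained by sliding the center a bounded distance along the Lipschitz boundary). Applying the hypothesis yields $|U|\le |U_{B_k}|+|R_{B_k}|$ on $\Omega\cap 2B_k$ with
\[
\Bigl(\fint_{\Omega\cap 2B_k}|R_{B_k}|^q\Bigr)^{1/q}\le CN_1\sqrt{\lambda}, \qquad \Bigl(\fint_{\Omega\cap 2B_k}|U_{B_k}|^2\Bigr)^{1/2}\le \eta\sqrt{\lambda}+N_2\bar{f}_k,
\]
where $\bar{f}_k$ is the $L^2$ average of $|f|$ over $\Omega\cap 4B_k$ and we have used the maximality of $Q_k$ together with the equivalence $\fint_{\Omega\cap 4B_k}G^2\approx \fint_{\Omega\cap CB_k}|U|^2$ valid when $\text{diam}(B_k)\ge t$. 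Two Chebyshev applications—one with exponent $q>p$ on $R_{B_k}$, one with exponent $2$ on $U_{B_k}$—give, for $A>1$,
\[
|E_{A\lambda}\cap Q_k| \le \bigl(CN_1^q A^{-q/2}+C\eta^2 A^{-1}\bigr)|Q_k|+C|\{\mathcal{M}(|f|^2)>c\lambda\}\cap 2B_k|.
\]
Choose $A$ large, then $\eta_0$ small enough that the prefactor is at most $1/2$. Summing over $k$, using $\sum_k|Q_k|\le |E_\lambda|$, multiplying by $\lambda^{p/2-1}$, and integrating over $\lambda>\lambda_0$ (together with $L^{p/2}$-boundedness of $\mathcal{M}$ applied to the $|f|^2$ term) yields
\[
\int_\Omega G^p \le \tfrac{1}{2}\int_\Omega G^p + C\lambda_0^{p/2}|\Omega| + C\int_\Omega |f|^p,
\]
from which the claim follows after absorption.

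The principal technical obstacle is the interplay between $G$ and $U$ near scale $t$: one must establish both the pointwise-type comparison $\mathcal{M}(G^2) \lesssim \mathcal{M}_t(G^2)$ and the equivalence $\fint_{B}G^2\approx \fint_{CB\cap\Omega}|U|^2$ for balls of diameter $\ge t$, so that the hypothesis, phrased in terms of $|U|^2$ averages, can be fed into an argument whose natural variable is $G^2$. Both are elementary Fubini calculations, but they rely on the Lipschitz character of $\partial\Omega$ to guarantee $|B_t(y)\cap \Omega|\approx t^d$ uniformly across the boundary; near $\partial\Omega$ one also needs the Whitney decomposition to produce cubes whose associated ball $B_k$ can be recentered on $\partial\Omega$ without losing scale, which is a standard consequence of the Lipschitz geometry.
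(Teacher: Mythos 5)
The paper does not prove this theorem at all---it is quoted as a variant of \cite[Theorem 4.2.6]{Shen18}---and your good-$\lambda$/Calder\'on--Zygmund scheme with the stopping scale $t$ (truncated maximal function $\mathcal{M}_t$, Whitney/CZ cubes of diameter $\gtrsim t$, Chebyshev with exponent $q$ on $R_B$ and exponent $2$ on $U_B$) is precisely the argument behind that cited result and its large-scale variants, so your route is essentially the same as the one the paper relies on. The outline is correct; the only points to tighten are routine: the good-$\lambda$ prefactor must be made smaller than $\tfrac12 A^{-p/2}$ (not just $\tfrac12$) so that the factor $A^{p/2}$ arising from the change of variable $\lambda\mapsto A\lambda$ can be absorbed, which is exactly why $\eta_0$ depends on $N_1,N_2,p,q$, and the final absorption of $\int_\Omega G^p$ requires its a priori finiteness---automatic for $t>0$ since $G$ is bounded by $Ct^{-d/2}\|U\|_{L^2(\Omega)}$, and handled by truncating in $\lambda$ when $t=0$.
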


\begin{theorem}\label{thm.LS.W1p}
    Assume that $\Omega$ is $C^1$ or convex. Let $p>2$ and $f\in L^p(\Omega_\e)^d$ and $F\in L^p(\Omega_\e)$. Let $u_\e$ be the weak solution of \eqref{main.eq.F}. Then there exits $m_0>0$ such that for any $m_0 \e \le r\le \text{\rm diam}(\Omega)$, we have
    \begin{equation}
        \bigg( \int_{\Omega} \bigg( \fint_{Q_{r}(x) \cap \Omega_\e} |\phi_\e \nabla u_\e|^2 \bigg)^{p/2} dx \bigg)^{1/p} \le C \big( \| f \|_{L^p(\Omega_\e)} + \| F\|_{L^p(\Omega_\e)} \big).
    \end{equation}
\end{theorem}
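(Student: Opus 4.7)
The plan is to apply the large-scale real-variable argument of Theorem~\ref{thm.LS.real} to $U=|\phi_\e\nabla u_\e|$, extended by zero to all of $\Omega$, at the threshold scale $t=m_0\e$ for a suitable large $m_0$. The source in \eqref{cond.LSRV-2} is taken as $g=|f|+r_0|F|$ with $r_0=\operatorname{diam}(\Omega)$, also extended by zero.

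Let $B$ be a ball with $m_0\e<\operatorname{diam}(B)\le r_0\operatorname{diam}(\Omega)$ such that either $4B\subset\Omega$ (interior case) or $B$ is centered on $\partial\Omega$ (boundary case). Slightly adjusting the radius via the co-area formula, as in the proof of Theorem~\ref{thm.local.rate}, one may arrange that $4B\cap\Omega_\e$ respects the geometric assumption \eqref{H}. I decompose $u_\e=v_B+w_B$ on $4B\cap\Omega_\e$, where $v_B$ solves $\cL_\e(v_B)=\operatorname{div}(\phi_\e f)+F$ with vanishing Dirichlet data on $\partial(4B)$ in the interior case, and additionally on $\partial\Omega\cap 4B$ in the boundary case. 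The Lax--Milgram energy estimate together with the weighted Poincar\'{e} inequality of Theorem~\ref{thm.unweigthed} gives
\[
    \bigg(\fint_{4B\cap\Omega_\e}|\phi_\e\nabla v_B|^2\bigg)^{1/2}
    \le C\bigg(\fint_{4B\cap\Omega_\e} g^2\bigg)^{1/2},
\]
verifying \eqref{cond.LSRV-2} with $U_B=|\phi_\e\nabla v_B|$ (extended by zero) and $\eta=0$. The complementary piece $w_B=u_\e-v_B$ then solves $\cL_\e(w_B)=0$ in $4B\cap\Omega_\e$, with $w_B=0$ on $\partial\Omega\cap 4B$ in the boundary case.

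For $R_B=|\phi_\e\nabla w_B|$, the interior case is immediate from Theorem~\ref{thm.LargeScaleLip}: applied to $w_B$ with vanishing source it yields $\|\phi_\e\nabla w_B\|_{L^\infty(2B\cap\Omega_\e)}\le C(\fint_{4B\cap\Omega_\e}|\phi_\e\nabla w_B|^2)^{1/2}$, so \eqref{cond.LSRV-1} holds with $q=\infty$. For a boundary ball with $\Omega$ only $C^1$ or convex, no boundary large-scale Lipschitz estimate is available, but the hypothesis \eqref{H} guarantees that the correctors $\chi$ and $\phi\nabla\chi$ remain bounded in a full neighborhood of $\partial\Omega$. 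I therefore use the local convergence rate of Theorem~\ref{thm.local.rate} to approximate $w_B$ by the solution $W$ of the constant-coefficient homogenized Dirichlet problem $\cL_0 W=0$ on $4B\cap\Omega$, for which classical boundary $W^{1,q}$ estimates on $C^1$ or convex $\Omega$ hold for every $q\in(1,\infty)$. Picking $q>p$ and absorbing the approximation error on the $L^2$ scale transfers the $W^{1,q}$ bound on $\nabla W$ to a boundary reverse H\"{o}lder estimate for $\phi_\e\nabla w_B$, which supplies \eqref{cond.LSRV-1}.

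With both hypotheses verified, Theorem~\ref{thm.LS.real} produces the coarsened $L^p$ estimate at scale $t=m_0\e$; combined with the basic energy bound $\|\phi_\e\nabla u_\e\|_{L^2(\Omega_\e)}\le C(\|f\|_{L^2}+\|F\|_{L^2})$, this gives the stated inequality. The chief technical obstacle is the boundary reverse H\"{o}lder estimate for $w_B$ in the $C^1$/convex setting: the homogenization approximation has to be executed cleanly up to $\partial\Omega$ using $\operatorname{dist}(\partial\Omega,\partial T_\e)\ge c_0$, and the approximation error must be balanced against the $W^{1,q}$ norm of $W$ for any prescribed $q>p$.
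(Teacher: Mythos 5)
Your overall skeleton (decompose into a source piece handled by the energy estimate and a homogeneous piece handled by regularity, then invoke Theorem \ref{thm.LS.real} at scale $t=m_0\e$) is the same as the paper's, and your interior case is essentially correct. The genuine gap is in the boundary case. You claim that the approximation error from Theorem \ref{thm.local.rate} can be ``absorbed on the $L^2$ scale'' so that $R_B=|\phi_\e\nabla w_B|$ itself satisfies the reverse H\"older bound \eqref{cond.LSRV-1} with some $q>p$, keeping $\eta=0$. This step fails: the error
\begin{equation*}
\phi_\e\bigl(\nabla w_B-\nabla W-\nabla\chi(x/\e)\cdot\nabla W\bigr)
\end{equation*}
is only small in $L^2$ (of size $C(\e/r)^{1/8}$ times the $L^2$ average of $\phi_\e\nabla w_B$); it carries no $L^q$ information, and a priori $\phi_\e\nabla w_B$ has no higher integrability near $\partial\Omega$ --- that is exactly what the theorem is trying to establish. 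An $L^q$ average of $R_B$ cannot be controlled by adding an $L^2$-small term to an $L^q$-bounded one, so \eqref{cond.LSRV-1} is not verified for your choice of $R_B$.

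The correct arrangement, which is what the paper does, is to put the corrected homogenized gradient into the good piece, $R_Q=\phi_\e\nabla v_0+\phi_\e\nabla\chi(x/\e)\cdot\nabla v_0$, which is in $L^q$ for every $q<\infty$ by the boundary $W^{1,q}$ theory for the constant-coefficient operator $\cL_0$ in $C^1$ or convex domains together with the boundedness of $\phi\nabla\chi$; the homogenization error and the source piece then both go into $U_Q=U-R_Q$, yielding \eqref{cond.LSRV-2} with $\eta=C(\e/r)^{1/8}\le Cm_0^{-1/8}$ rather than $\eta=0$. This is precisely where the threshold $m_0$ in the statement comes from: one needs $Cm_0^{-1/8}<\eta_0$, and the allowance of a small nonzero $\eta$ in Theorem \ref{thm.LS.real} is indispensable. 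In your writeup $m_0$ plays no quantitative role, which is a symptom of the same gap. (Two minor further points: in the interior case the $L^\infty$ bound for the homogeneous solution needs the combined small-scale plus large-scale Lipschitz estimate \eqref{est.Q1toQ2Lip}, not Theorem \ref{thm.LargeScaleLip} alone; and the approximation argument of Theorem \ref{thm.local.rate} is set up on cubes adjusted so that \eqref{H} holds, so working with balls $4B$ requires the analogous adjustment.)
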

\begin{proof}
We will apply Theorem \ref{thm.LS.real} to $U = \phi_\e \nabla u_\e$ with balls replaced by cubes. Let $Q_r^\e(x) = Q_r(x) \setminus T_\e$ with $r = m\e$ for some integer $m\ge m_0$. To make sure that $Q_r^\e(x)$ satisfies the geometric assumption \eqref{H}, we temporarily assume that $x$ is the center of some cell $\e(k+Y)$. Moreover, we will consider $Q_r^\e(x)$ such that either $Q_{4r}^\e(x) \subset \Omega_\e$ or $Q_r^\e(x)$ is centered in a boundary cell (i.e., a cell intersecting with $\partial \Omega$).
Consider
\begin{equation}\label{eq.ue.0}
    \cL_\e(u_\e) = \text{div}(\phi_\e f) + F \quad \text{in } Q_{2r}^\e \cap \Omega, \quad \text{and} \quad u_\e = 0 \quad \text{on } Q_{2r}^\e \cap \partial \Omega.
\end{equation}
Let $v_\e$ be the solution of 
\begin{equation*}
    \cL_\e(v_\e) = 0 \quad \text{in } Q_{2r}^\e \cap \Omega, \quad \text{and} \quad v_\e = u_\e \quad \text{on } \partial (Q_{2r}\cap \Omega).
\end{equation*}
Then $w_\e = u_\e - v_\e$ satisfies
\begin{equation*}
    \cL_\e(w_\e) = \text{div}(\phi_\e f) + F \quad \text{in } Q_{2r}^\e \cap \Omega, \quad \text{and} \quad w_\e = 0 \quad \text{on } \partial (Q_{2r}\cap \Omega).
\end{equation*}
Recall that $Q_{2r}^\e \cap \Omega$ satisfies the geometric assumption \eqref{H}. Thus, the energy estimate implies
\begin{equation}\label{est.we.fF}
    \bigg( \fint_{Q_{2r}^\e \cap \Omega} |\phi_\e\nabla w_\e|^2 \bigg)^{1/2} \le C\bigg( \fint_{Q_{2r}^\e \cap \Omega} |f|^2 \bigg)^{1/2} + C\bigg( \fint_{Q_{2r}^\e \cap \Omega} |F|^2 \bigg)^{1/2}.
\end{equation}

Next, we consider the regularity of $v_\e$. We distinguish between two cases. If $Q_{4r}^\e$ is contained in $\Omega$, then we have already proved the local uniform Lipschitz estimate of $v_\e$ in \eqref{est.Q1toQ2Lip} for $r>m_0 \e$, i.e.,
\begin{equation}\label{est.ve.Lip}
    \| \phi_\e \nabla v_\e \|_{L^\infty(Q_{r}^\e)} \le C\bigg( \fint_{Q_{2r}^\e} |\phi_\e\nabla v_\e|^2 \bigg)^{1/2}.
\end{equation}
In this case, let $U = \phi_\e \nabla u_\e$, $R_Q = \phi_\e \nabla v_\e$ and $U_Q = U-R_Q$. Then by \eqref{est.we.fF} and \eqref{est.ve.Lip}, we have
\begin{equation*}
    \| R_Q \|_{L^\infty(Q^\e_r)} \le C\bigg( \fint_{Q_{2r}^\e} |U|^2 \bigg)^{1/2} + C\bigg( \fint_{Q_{2r}^\e} (|f| + |F|)^2 \bigg)^{1/2},
\end{equation*}
for $r> m_0 \e$ and
\begin{equation*}
    \bigg( \fint_{Q_r^\e} |U_Q|^2 \bigg)^{1/2} \le C\bigg( \fint_{Q_{2r}^\e} (|f| + |F|)^2 \bigg)^{1/2}.
\end{equation*}
Previously we have assumed $r = m\e$ with some integer $m$ and $Q_{r}^\e$ is centered in a cell $\e(k+Y_*)$. But these assumptions can be removed by slightly adjusting the size of the cubes. Also the holes can be removed simply by a natural zero-extension. Hence, the conditions \eqref{cond.LSRV-1} and \eqref{cond.LSRV-2} are satisfied with $\eta = 0$, provided $Q_{4r} \subset \Omega$.

If $Q_{2r}^\e$ is centered in a boundary cell, we apply Theorem \ref{thm.local.rate} to approximate $v_\e$ with good functions such that the real-variable argument can apply. Precisely, by Theorem \ref{thm.local.rate}, there exists $v_0$ satisfying
\begin{equation*}
    \cL_0(v_0) = 0 \quad \text{in } Q_{\frac43 r}\cap \Omega \quad \text{and} \quad v_0 = 0 \quad \text{on } Q_{\frac43 r}\cap \partial \Omega,
\end{equation*}
and
\begin{equation}\label{est.ve-v0}
    \fint_{Q^\e_r\cap \Omega } \phi_\e^2 |\nabla v_\e - \nabla v_0 - \nabla\chi(x/\e)\cdot \nabla v_0|^2 \le C\Big( \frac{\e}{r} \Big)^{\frac14}  \fint_{Q_{2r}^\e\cap \Omega } \phi_\e^2 |\nabla v_\e|^2.
\end{equation}
Since $\partial \Omega$ is $C^1$ or convex, for any $q<\infty$, $\nabla v_0 \in L^q(Q_r \cap \Omega)$. Hence, by the boundedness of $\phi \nabla \chi$, we obtain for some $q>p$,
\begin{equation*}
    \bigg( \fint_{Q_r^\e\cap \Omega} \phi_\e^q|\nabla v_0 + \nabla \chi(x/\e) \nabla v_0|^q \bigg)^{1/q} \le C\bigg( \fint_{Q_{2r}^\e\cap \Omega } \phi_\e^2 |\nabla v_\e|^2 \bigg)^{1/2}.
\end{equation*}
Thus, if we set $U = \phi_\e \nabla u_\e$, $R_Q = \phi_\e \nabla v_0 + \phi_\e \nabla\chi(x/\e)\cdot \nabla v_0$ and $U_Q = U - R_Q$, then
\begin{equation*}
\begin{aligned}
    \bigg( \fint_{Q_r^\e \cap \Omega} |R_Q|^q \bigg)^{1/q} & \le C\bigg( \fint_{Q_{2r}^\e\cap \Omega } |U|^2 \bigg)^{1/2} + C\bigg( \fint_{Q_{2r}^\e \cap \Omega} (|f| + |F|)^2 \bigg)^{1/2},
\end{aligned}
\end{equation*}
and
\begin{equation*}
\begin{aligned}
    \bigg( \fint_{Q_r^\e \cap \Omega} |U_Q| \bigg)^{1/2} & \le C\Big( \frac{\e}{r} \Big)^{\frac18} \bigg( \fint_{Q_{2r}^\e\cap \Omega } |U|^2 \bigg)^{1/2} + C\bigg( \fint_{Q_{2r}^\e \cap \Omega} (|f| + |F|)^2 \bigg)^{1/2},
\end{aligned}
\end{equation*} 
Let $m_0>0$ be large enough such that
\begin{equation*}
    Cm_0^{-\frac18} < \eta_0,
\end{equation*}
where $\eta_0$ is given in Theorem \ref{thm.LS.real} depending on $C,p,q$ and $\Omega$. Hence, the conditions \eqref{cond.LSRV-1} and \eqref{cond.LSRV-2}  are also satisfied in this boundary case provided $r> m_0 \e$. As a consequence of Theorem \ref{thm.LS.real}, we obtain
\begin{equation*}
\begin{aligned}
    & \bigg( \int_{\Omega} \bigg( \fint_{Q_r^\e(x) \cap \Omega} |\phi_\e \nabla u_\e|^2 \bigg)^{p/2} dx \bigg)^{1/p} \\
    & \le C \bigg\{ |\Omega|^{1/p-1/2} \bigg( \int_{\Omega} |\phi_\e \nabla u_\e|^2 \bigg)^{1/2} + \bigg( \int_{\Omega} (|f| + |F|)^p \bigg)^{1/p} \bigg\} \\
    & \le C\| f\|_{L^p(\Omega_\e)} + C\| F\|_{L^p(\Omega_\e)}.
\end{aligned}
\end{equation*}
for all $r>m_0\e$. This proves the theorem.
\end{proof}

\begin{proof}[Proof of Theorem \ref{main-thm-2}]
    Note that $p=2$ corresponds to exactly the energy estimate. Let $p>2$. Then Theorem \ref{thm.LS.W1p} with $r = m_0 \e$ implies
    \begin{equation}\label{est.r=m0e}
        \bigg( \int_{\Omega} \bigg( \fint_{Q_{m_0\e}(x) \cap \Omega_\e} |\phi_\e \nabla u_\e|^2 \bigg)^{p/2} dx \bigg)^{1/p} \le C \big( \| f \|_{L^p(\Omega_\e)} + \| F\|_{L^p(\Omega_\e)} \big).
    \end{equation}
    Now Lemma \ref{lem.SmallScale.W1p} implies for any $x\in \Omega$,
    \begin{equation*}
    \begin{aligned}
        \bigg( \fint_{Q_{\e}(x) \cap \Omega_\e} |\phi_\e \nabla u_\e|^p \bigg)^{1/p} & \le C\bigg( \fint_{Q_{m_0\e}(x) \cap \Omega_\e} |\phi_\e \nabla u_\e|^2 \bigg)^{1/2} \\
        & \qquad + C\bigg( \fint_{Q_{m_0\e}(x) \cap \Omega_\e} |f|^p \bigg)^{1/p} + C\bigg( \fint_{Q_{m_0\e}(x) \cap \Omega_\e} |F|^p \bigg)^{1/p}.
    \end{aligned}
    \end{equation*}
    Substituting the last inequality into \eqref{est.r=m0e}, we obtain
    \begin{equation}\label{est.LS.ffLp}
    \begin{aligned}
        & \bigg( \int_{\Omega} \bigg[ \fint_{Q_{\e}(x) \cap \Omega_\e} |\phi_\e \nabla u_\e|^p \bigg] dx \bigg)^{1/p} \\
        & \le C \big( \| f \|_{L^p(\Omega_\e)} + \| F\|_{L^p(\Omega_\e)} \big) \\
        & \quad + C \bigg( \int_{\Omega} \bigg[ \fint_{Q_{m_0\e}(x) \cap \Omega_\e} |f|^p \bigg] dx \bigg)^{1/p} + C \bigg( \int_{\Omega} \bigg[ \fint_{Q_{m_0\e}(x) \cap \Omega_\e} |F|^p \bigg] dx \bigg)^{1/p}.
    \end{aligned}
    \end{equation}
    It is not hard to see that for any $g\in L^p(\Omega_\e)$, we have
    \begin{equation*}
        \int_{\Omega} \bigg[ \fint_{Q_{\e}(x) \cap \Omega_\e} |g|^p \bigg] dx \approx \int_{\Omega} \bigg[ \fint_{Q_{m_0 \e}(x) \cap \Omega_\e} |g|^p \bigg] dx \approx \int_{\Omega_\e} |g|^p.
    \end{equation*}
    This and \eqref{est.LS.ffLp} imply the desired estimate for $p>2$.

    Finally, the case $1<p<2$ follows from the case $p>2$ by duality.
\end{proof}

\begin{proof}[Proof of Theorem \ref{main-thm-1} (ii)]
This is a dual statement of Theorem \ref{main-thm-2}. Let $u_\e$ be the weak solution of \eqref{main.eq.phief} with $f\in L^{p}(\Omega_\e)$ and $1<p<d$. Let $g\in L^{p'}(\Omega)^d$ and $v_\e$ be the weak solution of
\begin{equation}\label{eq.ve-g}
    -\text{\rm div} (\phi^2_\e A_\e \nabla v_\e) = \text{\rm div} (\phi_\e g) \quad \text{ in } \Omega_\e\quad \text{ and }
\quad v_\e =0 \quad \text{ on } \partial \Omega.
\end{equation}
Then, by the equation \eqref{main.eq.phief} and \eqref{eq.ve-g}, we have
\begin{equation*}
    \int_{\Omega_\e} \phi_\e \nabla u_\e \cdot g = \int_{\Omega} f \phi_\e v_\e.
\end{equation*}
Theorem \ref{main-thm-2} and Theorem \ref{thm.WSPI} (i) imply
\begin{equation*}
\begin{aligned}
    \bigg| \int_{\Omega_\e} \phi_\e \nabla u_\e \cdot g \bigg| & = \| f\|_{L^p(\Omega_\e)} \| \phi_\e v_\e \|_{L^{p'}(\Omega_\e)} \\
    & \le C\| f\|_{L^p(\Omega_\e)} \| \phi_\e \nabla v_\e \|_{L^{q}(\Omega_\e)} \le C\| f\|_{L^p(\Omega_\e)} \| g \|_{L^{q}(\Omega_\e)},
\end{aligned}
\end{equation*}
where $p$ and $q$ satisfy $p' = q^*$, i.e., $1-\frac{1}{p} = \frac{1}{q} - \frac{1}{d}$. By duality, we have
\begin{equation*}
    \| \phi_\e \nabla u_\e \|_{L^{q'}} \le C\|f\|_{L^p(\Omega_\e)}.
\end{equation*}
Note that $\frac{1}{q'} = \frac{1}{p}-\frac{1}{d}$. Hence $q' = p^*$, which ends the proof.
\end{proof}

\section*{Appendix}
\renewcommand{\theequation}{A.\arabic{equation}}
\setcounter{equation}{0}
\renewcommand{\thetheorem}{A.\arabic{theorem}}
\setcounter{theorem}{0}

\noindent\textbf{Smoothing operators.}
Let $\alpha>0$ be fixed. Let $0\le \zeta \in C_0^\infty(B_{\alpha}(0))$ and $\int_{B_{\alpha}(0)} \zeta = 1$ and define the standard smoothing operator by
\begin{equation*}
    \mathscr{K}_\e f(x) = \int_{B_{\alpha \e}(x)} \e^{-d} \zeta(\frac{x-y}{\e})f(y) dy = \int_{B_{\alpha \e}(0)} \e^{-d} \zeta(\frac{y}{\e})f(x-y) dy.
\end{equation*}
Let $\Omega$ be a bounded domain in $\R^d$ and $\Omega(\e) = \{ x\in \Omega: \dist(x,\partial \Omega) < \alpha \e \}$. Note that if $f \in L^1(\Omega)$, then $\mathscr{K}_\e f$ is well-defined in $\Omega \setminus \Omega(\e)$.
Some properties of the smoothing operator are listed below, whose proofs may be found in \cite[Chapter 3.1]{Shen18}.

\begin{lemma}\label{lem.smoothing}
    Let $1\le p<\infty$. Assume $g\in L^p_{\rm per}(Y)$ and $f\in L^p(\Omega)$. Then
    \begin{equation*}
        \| g(x/\e) \mathscr{K}_\e f \|_{L^p(\Omega \setminus \Omega(\e))} \le C \|g \|_{L^p(Y)} \| f\|_{L^p(\Omega)},
    \end{equation*}
    where $C$ depends only on $p$ and $\alpha$.
\end{lemma}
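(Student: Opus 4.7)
The plan is to reduce the inequality to a pointwise estimate on the smoothed function via Jensen, then swap the order of integration by Fubini, and finally exploit the periodicity of $g$ to bound an averaged integral of $|g(x/\e)|^p$ over a ball of radius $\approx \e$.

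First, because $\zeta_\e(z) := \e^{-d} \zeta(z/\e)$ is a probability density, Jensen's inequality gives the pointwise bound
\begin{equation*}
    |\mathscr{K}_\e f(x)|^p \le \mathscr{K}_\e(|f|^p)(x) = \int \zeta_\e(x-y) |f(y)|^p \, dy,
\end{equation*}
which is the one routine step and the main simplification. Plugging this into the $L^p$ norm on $\Omega \setminus \Omega(\e)$ and applying Fubini yields
\begin{equation*}
    \int_{\Omega \setminus \Omega(\e)} |g(x/\e)\, \mathscr{K}_\e f(x)|^p \, dx \le \int_\Omega |f(y)|^p \int_{\Omega \setminus \Omega(\e)} |g(x/\e)|^p \zeta_\e(x-y) \, dx \, dy,
\end{equation*}
where I used that the integrand is supported in $\{|x-y|<\alpha\e\}$ and that $x \in \Omega\setminus\Omega(\e)$ forces $B_{\alpha\e}(x) \subset \Omega$, so the $y$-integral is effectively over $\Omega$.

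Next I would estimate the inner integral uniformly in $y$. Using $\zeta_\e \le \|\zeta\|_\infty \e^{-d}$ and then covering $B_{\alpha\e}(y)$ by the finitely many $\e$-cells $\e(z+Y)$ that it meets (at most $N = N(\alpha,d)$ of them), the $Y$-periodicity of $g$ gives
\begin{equation*}
    \int_{B_{\alpha\e}(y)} |g(x/\e)|^p \, dx \le N \e^d \|g\|_{L^p(Y)}^p,
\end{equation*}
and hence
\begin{equation*}
    \int_{\Omega \setminus \Omega(\e)} |g(x/\e)|^p \zeta_\e(x-y) \, dx \le C(\alpha) \|g\|_{L^p(Y)}^p.
\end{equation*}
Combining this with the previous display and taking the $p$-th root produces the claimed inequality.

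There is no real obstacle: the only potentially subtle point is the cell-covering step, which is a routine consequence of $Y$-periodicity together with the fact that $B_{\alpha\e}(y)$ has diameter $O(\e)$. The constant $C$ depends only on $p$, $\alpha$, $d$, and $\|\zeta\|_\infty$, as required.
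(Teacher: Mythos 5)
Your proof is correct; the paper does not prove this lemma itself but cites \cite[Chapter 3.1]{Shen18}, and your argument (Jensen's inequality for the mollifier, Fubini, and the periodic cell-covering bound $\int_{B_{\alpha\e}(y)}|g(x/\e)|^p\,dx \le N\e^d\|g\|_{L^p(Y)}^p$) is exactly the standard proof given there. The dependence of your constant on $d$ and $\|\zeta\|_\infty$ is harmless, since the dimension and the mollifier $\zeta$ are fixed once $\alpha$ is.
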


\begin{lemma}\label{lem.smoothing.error}
    Let $1\le p<\infty$ and $\Omega$ be a bounded Lipschitz domain in $\R^d$. Assume $f\in W^{1,p}(\Omega)$. Then
    \begin{equation*}
        \| \mathscr{K}_\e f - f \|_{L^p(\Omega \setminus \Omega(\e))} \le C\e  \| \nabla f\|_{L^p(\Omega)},
    \end{equation*}
    where $C$ depends only on $p$ and $\alpha$.
\end{lemma}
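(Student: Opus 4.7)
The plan is to prove this by the standard trick of writing the difference as an average of divided differences and then applying Minkowski's (or Jensen's) inequality. First I would use density: since $\Omega$ is Lipschitz, smooth functions are dense in $W^{1,p}(\Omega)$, so it suffices to establish the estimate for $f \in C^\infty(\overline{\Omega})$ (with a constant independent of $f$) and then pass to the limit.

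For $x \in \Omega \setminus \Omega(\e)$, the ball $B_{\alpha\e}(x)$ is contained in $\Omega$, so $f(x-y)$ is well-defined for $|y| \le \alpha\e$. Using $\int \e^{-d}\zeta(y/\e)\,dy = 1$ and the fundamental theorem of calculus, I would write
\begin{equation*}
\mathscr{K}_\e f(x) - f(x) = \int_{B_{\alpha\e}(0)} \e^{-d}\zeta(y/\e)\bigl(f(x-y)-f(x)\bigr)\,dy = -\int_{B_{\alpha\e}(0)} \e^{-d}\zeta(y/\e)\int_0^1 \nabla f(x-ty)\cdot y\,dt\,dy.
\end{equation*}
Since $|y| \le \alpha\e$ on the support of $\zeta(\cdot/\e)$, Jensen's inequality applied to the probability measure $\e^{-d}\zeta(y/\e)\,dy\,dt$ gives
\begin{equation*}
|\mathscr{K}_\e f(x) - f(x)|^p \le (\alpha\e)^p \int_0^1\int_{B_{\alpha\e}(0)} \e^{-d}\zeta(y/\e) |\nabla f(x-ty)|^p\,dy\,dt.
\end{equation*}

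Next I would integrate over $\Omega \setminus \Omega(\e)$ and swap the order of integration by Fubini. The key geometric observation is that whenever $x \in \Omega\setminus\Omega(\e)$, $|y|\le \alpha\e$, and $t \in [0,1]$, the point $x-ty$ lies in $\Omega$; hence for each fixed $(y,t)$ the change of variables $x\mapsto x-ty$ shows
\begin{equation*}
\int_{\Omega\setminus\Omega(\e)} |\nabla f(x-ty)|^p\,dx \le \int_{\Omega} |\nabla f(z)|^p\,dz.
\end{equation*}
Plugging this back and using $\int \e^{-d}\zeta(y/\e)\,dy = 1$ yields
\begin{equation*}
\|\mathscr{K}_\e f - f\|_{L^p(\Omega\setminus\Omega(\e))}^p \le (\alpha\e)^p \|\nabla f\|_{L^p(\Omega)}^p,
\end{equation*}
which is exactly the claim with $C = \alpha$.

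I do not expect any real obstacle here; the only subtlety is just to be careful that $x-ty$ stays in $\Omega$ so that the translation-invariance argument for $\nabla f$ is valid, and this is ensured precisely by restricting $x$ to $\Omega\setminus\Omega(\e)$. The density step for passing from $C^\infty(\overline\Omega)$ to $W^{1,p}(\Omega)$ is standard and uses only that $\mathscr{K}_\e$ is bounded on $L^p$ (with norm $\|\zeta\|_{L^1}\le C$) and the Lipschitz character of $\Omega$.
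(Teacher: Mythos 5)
Your argument is correct, and it is essentially the standard proof: the paper itself does not prove Lemma \ref{lem.smoothing.error} but simply cites \cite[Chapter 3.1]{Shen18}, where the same convolution/fundamental-theorem-of-calculus/Minkowski argument is used. The one point specific to the localized statement here — that $x-ty\in\Omega$ whenever $x\in\Omega\setminus\Omega(\e)$, $|y|\le\alpha\e$, $t\in[0,1]$, so the translation step only sees $\nabla f$ on $\Omega$ — is exactly the subtlety you identify and handle correctly, and the density step is routine since $\|\mathscr{K}_\e g\|_{L^p(\Omega\setminus\Omega(\e))}\le C\|g\|_{L^p(\Omega)}$.
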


\begin{lemma}\label{lem.bdrysmoothing}
    Let $\Omega$ be a bounded Lipschitz domain in $\R^d$ and $q = \frac{2d}{d+1}<2$. Assume $g\in L^2_{\rm per}(Y)$ and $f\in W^{1,q}(\Omega)$. Then
    \begin{equation*}
        \int_{\Omega(2t) \setminus \Omega(t)} |g(x/\e)|^2 |\mathscr{K}_\e f|^2 \le Ct \|g \|_{L^2(Y)}^2 \| f\|_{W^{1,q}(\Omega)}^2,
    \end{equation*}
    where $t\ge \e$ and $C$ depends only on $\Omega$ and $\alpha$.
\end{lemma}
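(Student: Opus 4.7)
The plan is to first use Jensen and Fubini to replace $|\mathscr{K}_\e f|^2$ with an average of $|f|^2$, absorbing the periodic weight $|g(x/\e)|^2$ into $\|g\|_{L^2(Y)}^2$ through a change of variables, and then to estimate the resulting boundary-strip integral of $|f|^2$ by combining the co-area formula with a trace inequality that is sharp precisely at the exponent $q = 2d/(d+1)$.

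First, since $\e^{-d}\zeta(\cdot/\e)$ is a probability density, Jensen's inequality gives $|\mathscr{K}_\e f(x)|^2 \le (\mathscr{K}_\e |f|^2)(x)$; note that $f \in L^2(\Omega)$ is automatic by the Sobolev embedding $W^{1,q} \hookrightarrow L^{q^*}$ with $q^* = 2d/(d-1) > 2$. Applying Fubini yields
$$\int_{\Omega(2t)\setminus\Omega(t)} |g(x/\e)|^2 |\mathscr{K}_\e f(x)|^2 dx \le \int_\Omega |f(y)|^2\, G(y)\, dy,$$
where $G(y) = \int_{\Omega(2t)\setminus\Omega(t)} |g(x/\e)|^2 \e^{-d}\zeta((x-y)/\e)\, dx$. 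The substitution $u = x/\e$ and the boundedness of $\zeta$ give $G(y) \le \|\zeta\|_\infty \int_{B_\alpha(y/\e)} |g|^2 du$, and since the ball $B_\alpha(y/\e)$ meets only $O(1)$ translates of $Y$, the $Y$-periodicity of $g$ yields $G(y) \le C\|g\|_{L^2(Y)}^2$. Moreover $G(y)$ vanishes unless $\dist(y, \Omega(2t)\setminus\Omega(t)) < \alpha\e \le \alpha t$, so $G$ is supported in $\Omega(3t)$.

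Next, I would estimate $\int_{\Omega(3t)} |f|^2$. Setting $\partial\Omega_s := \{x\in\Omega : \dist(x,\partial\Omega) = s\}$ and applying the co-area formula to the $1$-Lipschitz function $\dist(\cdot,\partial\Omega)$ gives
$$\int_{\Omega(3t)} |f|^2 dy = \int_0^{3\alpha t} \int_{\partial\Omega_s} |f|^2\, d\mathcal{H}^{d-1}\, ds.$$
The critical arithmetic observation is that the sharp Sobolev trace exponent $(d-1)q/(d-q)$ equals exactly $2$ when $q = 2d/(d+1)$, so the trace inequality gives $\|f\|_{L^2(\partial\Omega_s)}^2 \le C\|f\|_{W^{1,q}(\Omega)}^2$. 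Integrating in $s$ over $(0, 3\alpha t)$ produces the bound $C t \|f\|_{W^{1,q}(\Omega)}^2$, which combined with the first step yields the claim.

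The main obstacle is ensuring a uniform trace constant on the shifted boundaries $\partial\Omega_s$. Since $\Omega$ is merely Lipschitz, one must verify that the subdomains $\{\dist(\cdot,\partial\Omega) > s\}$ are uniformly Lipschitz for small $s$; this can be done locally via a partition of unity, flattening the boundary in each chart so that the shifted level sets become graphs with essentially the same Lipschitz constant as $\partial\Omega$, making the trace inequality apply with a constant independent of $s$.
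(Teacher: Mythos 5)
The paper does not actually prove this lemma; it is quoted from \cite[Chapter 3.1]{Shen18}, so your argument can only be compared with the standard proof there. Your route is essentially that standard one, and it is correct in outline: Jensen plus Fubini plus periodicity of $g$ reduces the claim to the boundary-layer estimate $\int_{\Omega(3t)}|f|^2\le Ct\,\|f\|_{W^{1,q}(\Omega)}^2$, which is precisely the paper's Lemma \ref{lem.layerL2} (also quoted from \cite{Shen18}), and the arithmetic $q(d-1)/(d-q)=2$ at $q=2d/(d+1)$ is exactly the reason this exponent appears. The first half of your argument (the bound $G(y)\le C\|g\|_{L^2(Y)}^2$ and the support statement, using $\e\le t$) is fine.

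The one step that needs more care is your justification of a uniform trace constant on the level sets $\partial\Omega_s=\{\dist(\cdot,\partial\Omega)=s\}$. Flattening the boundary in a chart distorts the distance function, so the level sets do not simply ``become graphs'' after flattening; moreover, once $s$ is comparable to the localization radius of the charts, $\{\dist=s\}$ can pick up medial-axis pieces and need not be a Lipschitz surface at all, so the claim as stated is not correct uniformly in $s$. It is fixable in two ways. (a) Keep your level sets but split scales: for $t$ above a fixed threshold $s_0(\Omega)$ the desired bound is trivial from $\|f\|_{L^2(\Omega)}\le C\|f\|_{W^{1,q}(\Omega)}$, while for $s\le s_0(\Omega)$ each point of $\{\dist=s\}$ sees only one boundary chart, and in that chart $\{\dist>s\}$ is the supergraph of $\psi_s(x')=\sup_{|w'|\le s}\big[\phi(x'+w')+\sqrt{s^2-|w'|^2}\,\big]$, a supremum of $M$-Lipschitz functions, hence $M$-Lipschitz; this gives the uniform Lipschitz character you need, in the original (unflattened) coordinates. (b) More simply, avoid distance level sets altogether: in each chart foliate the layer by the vertical translates $\{x_d=\phi(x')+r\}$, $0<r<Ct$, which are uniformly Lipschitz by construction, apply the critical trace inequality $W^{1,q}\to L^2$ on each translate, and integrate in $r$; this is the standard proof of the layer estimate and sidesteps the regularity of $\{\dist=s\}$ entirely. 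With either repair your proof is complete.
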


The following is a related lemma without smoothing.
\begin{lemma}\label{lem.layerL2}
    Under the same assumption as Lemma \ref{lem.bdrysmoothing}, we have
    \begin{equation*}
        \|f \|_{L^2(\Omega(t))} \le Ct^\frac12 \| f\|_{W^{1,q}(\Omega)},
    \end{equation*}
    for any $t>0$.
\end{lemma}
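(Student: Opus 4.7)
The plan is to combine the critical trace embedding with a co-area foliation argument. The choice $q = \tfrac{2d}{d+1}$ is precisely tuned so that the Sobolev trace exponent $\tfrac{q(d-1)}{d-q}$ equals $2$; this is the content that makes the estimate work at $L^2$.

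First I would reduce to the case of small $t$. For $t \ge t_0$ (with $t_0$ depending only on $\Omega$) we have $|\Omega(t)| \lesssim 1 \lesssim t$, and combining H\"older's inequality with the standard Sobolev embedding $W^{1,q}(\Omega) \hookrightarrow L^{q^*}(\Omega)$, where $q^* = \tfrac{2d}{d-1} > 2$, one gets $\|f\|_{L^2(\Omega(t))} \le C\|f\|_{W^{1,q}(\Omega)} \le Ct^{1/2}\|f\|_{W^{1,q}(\Omega)}$. So I may assume $t \in (0, t_0]$.

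For small $t$, I foliate a tubular neighborhood of $\partial\Omega$ by parallel surfaces. Since $\Omega$ is bounded and Lipschitz, one can construct a regularized distance function $\rho:\Omega \to [0,\infty)$ with $\rho(x) \asymp \dist(x,\partial\Omega)$ and $|\nabla \rho| \asymp 1$ on a uniform neighborhood of $\partial\Omega$, and such that the inner parallel domains $\Omega_s := \{x \in \Omega : \rho(x) > s\}$ are Lipschitz with Lipschitz character bounded uniformly in $s \in [0, C t_0]$. For such $t$, $\Omega(t) \subset \{\rho < C\alpha t\}$, and the co-area formula gives
\[
\|f\|_{L^2(\Omega(t))}^2 \;\le\; \int_0^{C\alpha t} \int_{\{\rho = s\}} \frac{|f|^2}{|\nabla \rho|}\, d\mathcal{H}^{d-1}\, ds \;\le\; C\int_0^{C\alpha t} \|f\|_{L^2(\Gamma_s)}^2\, ds,
\]
where $\Gamma_s := \{\rho = s\}$. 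Now on each Lipschitz domain $\Omega_s$, the critical trace embedding yields
\[
\|f\|_{L^2(\Gamma_s)} \;\le\; C\|f\|_{W^{1,q}(\Omega_s)} \;\le\; C\|f\|_{W^{1,q}(\Omega)},
\]
with trace constant uniform in $s$. Integrating the square of this estimate over $s \in (0, C\alpha t)$ produces $\|f\|_{L^2(\Omega(t))}^2 \le Ct\|f\|_{W^{1,q}(\Omega)}^2$, as desired.

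The only real obstacle is the uniformity of the trace constant along the foliation $\{\Omega_s\}_{s\in[0,Ct_0]}$. This rests on two classical facts: (i) inner parallel domains of a bounded Lipschitz domain remain Lipschitz, with Lipschitz character controlled uniformly for $s$ small (via a standard partition of unity and bi-Lipschitz flattening of $\partial\Omega$); and (ii) the trace constant of $W^{1,q}(U) \hookrightarrow L^2(\partial U)$ depends only on the Lipschitz character of $U$ and on $q = 2d/(d+1)$. Both points are standard and I would invoke them without detailed proof. An alternative route avoiding (i) is to flatten $\partial \Omega$ locally by bi-Lipschitz maps in a finite cover, reducing the question to a half-space problem where the critical trace inequality $\|f(\cdot,s)\|_{L^2(\mathbb R^{d-1})} \le C\|f\|_{W^{1,q}(\mathbb R^d_+)}$ holds uniformly in $s \ge 0$ by a direct Fourier / integration argument.
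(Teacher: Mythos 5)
Your proposal is correct, and in fact the paper never proves Lemma \ref{lem.layerL2}: it is stated in the Appendix as a known fact alongside the smoothing lemmas whose proofs are referred to \cite[Chapter 3.1]{Shen18}, and the standard argument there is precisely your ``alternative route'' --- cover $\partial\Omega$ by finitely many Lipschitz charts, work over the graph $x_d=\psi(x')$, use the critical trace inequality $W^{1,q}\to L^2$ (the exponent $q=2d/(d+1)$ being chosen so that $q(d-1)/(d-q)=2$) on the translated graphs $\{x_d=\psi(x')+s\}$ uniformly in $s$, and integrate in $s$ over $(0,Ct)$. Your primary route via a regularized distance and the foliation by inner parallel domains $\Omega_s$ gives the same estimate, but it leans on the assertion that the level sets $\{\rho=s\}$ are Lipschitz with character uniform in $s$; this is true for $s$ small compared with the chart scale (locally the parallel set is the erosion of a single Lipschitz epigraph by a ball, which is again an epigraph with the same Lipschitz constant), but it is not as freely citable as the flat-boundary trace inequality, and parallel sets of Lipschitz domains can degenerate once $s$ is comparable to the scale at which distinct boundary sheets interact. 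Since you handle large $t$ separately via $W^{1,q}\hookrightarrow L^{2d/(d-1)}$ and you supply the flattening argument as a complete fallback, the proof stands; if you streamline it, make the flattening argument the main line and treat the parallel-domain foliation as an optional remark.
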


\noindent\textbf{Nontangential maximal functions.}
We consider the solvability and regularity of the Dirichlet problem in a Lipschitz domain:
\begin{equation}\label{eq.Dp problem}
\left\{
\begin{aligned}
    -\Delta u & = 0 &\quad& \text{in } \Omega,\\
    u & = f &\quad & \text{on } \partial \Omega.
\end{aligned}
    \right.
\end{equation}
Assume $f\in L^2(\partial \Omega)$. Define the nontangential maximal function $u^*(Q)$ with $Q\in \partial \Omega$ by
\begin{equation*}
    u^*(Q) = \sup\{ |u(x)|: x\in \Omega \text{ and }  \dist(x,\partial \Omega) > \beta |x-Q| \},
\end{equation*}
where $\beta$ is a fixed constant chosen according to the Lipschitz character of $\partial \Omega$. It has been proved (see \cite{Da79}) that \eqref{eq.Dp problem} is solvable with $f\in L^2(\partial \Omega)$ and 
\begin{equation*}
    \| u^* \|_{L^2(\partial \Omega)} \le C\| f\|_{L^2(\partial \Omega)}.
\end{equation*}

Similarly, we may define
\begin{equation*}
    (\nabla u)^*(Q) = \sup\{ |\nabla u(x)|: x\in \Omega \text{ and }  \dist(x,\partial \Omega) > \beta |x-Q| \}.
\end{equation*}
Then if, in addition,  $f\in H^1(\partial \Omega)$ and $\partial \Omega$ is connected, it was proved (see \cite{JK}) that
\begin{equation}\label{est.L2reg}
    \| (\nabla u)^* \|_{L^2(\partial \Omega)} \le C\| \nabla_{\rm tan} f \|_{L^2(\partial \Omega)}.
\end{equation}
As a simple corollary, we have
\begin{lemma}\label{lem.nontangential.L2}
    Let $f\in H^1(\partial \Omega)$ and $u$ be a solution of \eqref{eq.Dp problem}. Assume $\partial \Omega$ is connected.
    Then
    \begin{equation*}
        \| \nabla u \|_{L^2(\Omega(t))} \le Ct^\frac{1}{2} \| \nabla_{\rm tan} f \|_{L^2(\partial \Omega)},
    \end{equation*}
    for any $t>0$.
\end{lemma}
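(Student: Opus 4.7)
The plan is to reduce the layer estimate to the Dahlberg--Jerison--Kenig $L^2$ regularity bound \eqref{est.L2reg} for the nontangential maximal function $(\nabla u)^*$ by a pointwise domination followed by a tubular coarea computation. The key observation is that for every $x \in \Omega$, picking a nearest-point projection $Q(x) \in \partial \Omega$ gives $\dist(x, \partial \Omega) = |x - Q(x)|$, so in particular $\dist(x, \partial \Omega) > \beta |x - Q(x)|$ whenever $\beta < 1$. By the definition of the nontangential maximal function (with the same aperture $\beta$ as in \eqref{est.L2reg}), this yields the pointwise inequality
\begin{equation*}
    |\nabla u(x)| \le (\nabla u)^*(Q(x)) \quad \text{for all } x \in \Omega.
\end{equation*}

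Next, I would integrate this inequality over the boundary strip $\Omega(t)$. Since $\partial \Omega$ is Lipschitz, one can cover $\Omega(t)$ by finitely many pieces of Lipschitz collars $\{(Q, s) : Q \in U_j \subset \partial \Omega,\ 0 < s < ct\}$ in which the Lebesgue measure is comparable to $d\sigma(Q)\, ds$, with bi-Lipschitz constants depending only on the Lipschitz character of $\partial \Omega$. For each such chart the projection $x \mapsto Q(x)$ is, up to a bounded Jacobian, the map $(Q, s) \mapsto Q$, so
\begin{equation*}
    \int_{\Omega(t)} \bigl((\nabla u)^*(Q(x))\bigr)^2 dx \le C\int_0^{ct}\!\! \int_{\partial \Omega} \bigl((\nabla u)^*(Q)\bigr)^2 d\sigma(Q)\, ds \le Ct\,\| (\nabla u)^* \|_{L^2(\partial \Omega)}^2.
\end{equation*}
Combining the two displays with \eqref{est.L2reg} gives $\| \nabla u \|_{L^2(\Omega(t))}^2 \le Ct\, \| \nabla_{\rm tan} f \|_{L^2(\partial \Omega)}^2$, which is the desired estimate.

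The main technical obstacle is the measurability/non-uniqueness of the projection $Q(x)$ and the change-of-variables. This is essentially cosmetic for a Lipschitz domain: one either chooses a Borel selection $x \mapsto Q(x)$ for the closest-point multifunction, or avoids the projection altogether by parametrizing $\Omega(t)$ directly via Lipschitz graph coordinates on finitely many boundary charts and using $s = x_d - \varphi(x')$ (or the signed distance) as the transverse variable. In either case one only needs that the transverse slice has one-dimensional thickness $O(t)$, which is exactly what the definition of $\Omega(t)$ provides. No control of the gradient in the interior, no Caccioppoli on the layer, and no harmonic-measure estimate are needed beyond the already-cited \eqref{est.L2reg}.
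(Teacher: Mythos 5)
Your argument is correct and is exactly the route the paper intends: Lemma \ref{lem.nontangential.L2} is stated as a ``simple corollary'' of \eqref{est.L2reg}, and the implicit proof (also used in the sketch of Lemma \ref{lem.u0inLip}) is precisely your pointwise domination of $|\nabla u|$ by the nontangential maximal function together with a Fubini/coarea count over a boundary collar of thickness $O(t)$, which produces the factor $t^{1/2}$. Your remarks on handling the projection via graph coordinates with the transverse variable $s = x_d - \varphi(x')$ (with the aperture $\beta$ adapted to the Lipschitz constant, as the paper assumes) resolve the only technical point, so nothing is missing.
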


\begin{proof}[Sketch of the proof of Lemma \ref{lem.u0inLip}]
    Let $v \in H^2(\R^d)$ be such that $\cL_0( v) = F \mathbbm{1}_{\Omega}$ in $\R^d$ and $\| v \|_{H^2(\R^d)} \le C\| F\|_{L^2(\Omega)}$. By the trace theorem,
    \begin{equation*}
        \| v \|_{H^1(\partial \Omega)} \le C\| F\|_{L^2(\Omega)},
    \end{equation*}
    and
    \begin{equation}\label{est.DvF}
        \int_{\Omega(\e)} |\nabla v|^2 \le C\e \| v \|_{H^2(\R^d)}^2 \le C\e \| F\|_{L^2(\Omega)}^2.
    \end{equation}
    Let $w = u_0 - v$, then $w$ satisfies
    \begin{equation*}
        \cL_0 (w) = 0 \quad \text{in } \Omega \quad \text{and} \quad w = g-v \quad \text{on } \partial \Omega.
    \end{equation*}
    By Lemma \ref{lem.nontangential.L2}, we have
    \begin{equation}\label{est.Dw*}
        \| (\nabla w)^* \|_{L^2(\partial \Omega)} \le C\| g - v\|_{H^1(\partial \Omega)} \le C\| g\|_{H^1(\partial \Omega)} + C\| F\|_{L^2(\Omega)}.
    \end{equation}
    It follows that
    \begin{equation*}
        \int_{\Omega(\e)} |\nabla w|^2 \le C\e \int_{\partial \Omega} |(\nabla w)^*|^2 \le C\e \big( \| g\|_{H^1(\partial \Omega)}^2 + \| F\|_{L^2(\Omega)}^2 \big).
    \end{equation*}
    This proves \eqref{est.u0layer-1} in view of \eqref{est.DvF}.

    Let $x\in \Omega$ and $\delta(x) = \dist(x,\partial \Omega)$. Then by the interior estimate,
    \begin{equation*}
        |\nabla^2 w(x)| \le \frac{C}{\delta(x)} \bigg( \fint_{B_{\delta(x)/2}} |\nabla w|^2 \bigg)^{1/2}
    \end{equation*}
    It follows that
    \begin{equation*}
        \int_{\{ \delta(x) = t \}} |\nabla^2 w|^2 \le Ct^{-3} \int_{t/2<\delta(y)<2t} |\nabla w|^2 dy \le Ct^{-2} \int_{\partial \Omega} |(\nabla w)^*|^2
    \end{equation*}
    Consequently, the co-area formula leads to
    \begin{equation*}
        \int_{\Omega\setminus \Omega(c\e)} |\nabla^2 w|^2 \le \int_{c\e}^{\text{diam}(\Omega)} Ct^{-2} dt\int_{\partial \Omega} |(\nabla w)^*|^2 \le C\e^{-1} \int_{\partial \Omega} |(\nabla w)^*|^2.
    \end{equation*}
    This estimate together with the $H^2$ estimate of $v$ and \eqref{est.Dw*} gives \eqref{est.u0layer-2}.
\end{proof}

\bibliography{Mybib}
\bibliographystyle{plain}
\medskip



\end{document}